\date{\today}
\definecolor{r}{rgb}{.6,0,.3}
\newcounter{intro}
\newtheorem{theo}[intro]{Theorem}
\newcommand{\Tan}{\operatorname{Tan}}
\newcommand{\B}{\mathbb{B}}
\newcommand{\R}{\mathbb{R}}
\newcommand{\RR}{\mathbb{R}}
\newcommand{\rth}{\mathbb{R}^3}
\newcommand{\n}{\mathbb{N}}
\newcommand{\h}{\mathbb{H}}
\newcommand{\esf}{\mathbb{S}}
\newcommand{\diam}{\operatorname{diam}}
\newcommand{\dist}{\operatorname{dist}}
\def\rth{{\R^3}}
 \newcommand{\FF}{\mathcal{F}}
\newcommand{\df}{ \stackrel{\rm def}{=}}
\newcommand{\Int}{\operatorname{Int}}
\numberwithin{equation}{section}
\newtheorem{theorem}{Theorem}[section]
\newtheorem{corollary}[theorem]{Corollary}
\newtheorem{lemma}[theorem]{Lemma}
\newtheorem{claim}[theorem]{Claim}
\newtheorem*{claim*}{Claim}
\newtheorem{proposition}[theorem]{Proposition}
\newtheorem*{maintheorem*}{Main Theorem}
\newtheorem*{theorem*}{Theorem}
\newtheorem*{mainlemma*}{Main Lemma}
\newtheorem*{keylemma*}{Key Lemma}
\theoremstyle{definition}
\newtheorem{remark}[theorem]{Remark}
\newtheorem*{remark*}{Remark}
\newtheorem{definition}[theorem]{Definition}
\newtheorem*{conjecture}{Conjecture}
\title[Area minimizing surfaces in $\h^3$]{%
  Properly embedded, area-minimizing surfaces in hyperbolic $3$-space
}%
\subjclass[2000]{Primary 53A10; Secondary 49Q05.}
\keywords{Minimal surface, hyperbolic space, bridge principle}
\author[F. Martín]{Francisco Mart\'\i{}n}
\address[Martín]{
  Departamento de Geometr\'\i{}a y Topolog\'\i{}a,
  Universidad de Granada,
  18071 Granada, Spain.
}
\email{fmartin@ugr.es}
\author[B. White]{Brian White}
\address[White]{%  
Department of Mathematics,
Stanford University,
Building 380, Sloan Hall,
Stanford, California 94305, U.S.A.
}
\email{white@math.stanford.edu}
\thanks{
The authors would like to thank Christos Mantouldis for carefully
reading the manuscript and making many useful suggestions.
  The first author was partially supported by
  MCI-FEDER grants  MTM2007-61775 and MTM2011-22547.
  The second author was partially supported by  National Science Foundation
   grants~DMS-0406209 and DMS~1105330.
  }
\begin{document}

\maketitle

\begin{abstract}
We prove prove a bridge principle at infinity for area-minimizing surfaces in the hyperbolic space
 $\h^3$, and we use it to prove that any open, connected, orientable 
surface can be properly embedded in $\h^3$
 as an area-minimizing surface. 
 Moreover, the embedding can be constructed in such 
 a way that the limit sets of different ends are disjoint. 
\end{abstract}

\setcounter{tocdepth}{1}
\tableofcontents

\section{Introduction}
The construction of new examples of complete minimal surfaces in hyperbolic space has had a very powerful tool: the  solvability of the asymptotic Plateau problem. The asymptotic Plateau problem in hyperbolic space basically asks the existence of an area-minimizing submanifold in $ \h^{n +1} $ which is asymptotic to a given submanifold $ \Gamma^{n-1} \subset \partial\h^{n +1} $, where $ \partial\h^{n +1} $
represents the sphere of infinity of $ \h^{n +1} $, which we also call {\em the ideal boundary
of hyperbolic space.}

Using methods from  geometric measure theory, Michael Anderson
\cite{anderson1} solved the asymptotic Plateau problem for absolutely area-minimizing submanifolds in any dimension and codimension.

Anderson did not impose any restriction to the topology the solutions he 
{gets}, so we cannot get any
idea about their topological properties. In this way, it becomes
interesting (as in the classical Plateau problem) to find the area-minimizing solution 
but fixing {\em a priori} the topological type. In \cite{anderson2}, Anderson focused on the asymptotic Plateau problem with the type of a disk and provided an existence result in dimension 3.

Moreover, in \cite{anderson2}, Anderson built a special Jordan curves in 
$ \partial\h^3 $, such that the surface obtained as a solution to the  asymptotic Dirichlet problem cannot be a plane. In fact, he built examples of genus $ g> g_0 $ for a particular genus
$ g_0$.
In the same context, de Oliveira and Soret \cite{oliveira-soret}  demonstrated the existence of complete and stable minimal surfaces  in hyperbolic 3-space for any orientable {\em finite} topological
type\footnote{A surface has finite topological type 
 if it has the topology of a compact surface minus a
{\bf finite} number of points.}. They also studied the isotopy type of these surfaces in some special cases. 
The main difference with the result  of Anderson is that Anderson begins with asymptotic data, and gives an area-minimizing surface with that particular data but without any kind of 
control over the topological type, while Oliveira and Soret
start with a  surface with boundary and build a stable embedded minimal surface
in the hyperbolic space whose asymptotic (or ideal) boundary is determined essentially
by the surface. In this setting, we can frame the following conjecture:
\begin {conjecture} [A. Ros]
{Every} open, connected, orientable surface\footnote {We say that a connected surface is open if it is non-compact and has no boundary.}  can be properly and  minimally embedded in $ \h^3 $.
\end {conjecture}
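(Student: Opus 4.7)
By Richards' classification of non-compact orientable surfaces, $S$ is determined up to homeomorphism by its genus $g\in\{0,1,2,\ldots,\infty\}$, its space of ends $E(S)$, and the subset $E_\infty(S)\subseteq E(S)$ of ends carrying infinite genus. The strategy is to realize this combinatorial data as a properly embedded area-minimizing surface in $\h^3$, built from area-minimizing disks (via Anderson's solution of the asymptotic Plateau problem) that are then joined together by the bridge principle at infinity, which I take as the main technical input of the paper.

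\textbf{First I would} set up the ends: choose pairwise disjoint Jordan curves $\{\Gamma_i\}\subset\partial\h^3$ whose accumulation pattern in $\partial\h^3$ models the end-space $E(S)$. By Anderson, each $\Gamma_i$ bounds an area-minimizing disk $D_i\subset\h^3$, and the disks are pairwise disjoint because their asymptotic boundaries are (maximum principle at infinity together with strict convexity of horoballs). This produces a properly embedded area-minimizing surface with the correct ends but zero genus. Next, I would install handles one at a time by attaching asymptotic bridges between appropriate sheets of the current surface, appealing to the bridge principle at infinity to preserve the area-minimizing property while adding exactly one handle per bridge; a careful bookkeeping distributes finite-genus handles across the ends in $E_\infty(S)$ or any other end that requires them. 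This finishes the construction when $S$ has finite topological type.

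\textbf{For the infinite-type case}, I would exhaust $S$ by a nested sequence $S_1\subset S_2\subset\cdots$ of finite-type subsurfaces, realize each $S_n$ as above, and arrange the inductive data so that (i) the curves and bridges added at stage $n$ cluster on $\partial\h^3$ only at points disjoint from the limit sets of previously placed ends, guaranteeing disjoint limit sets for distinct ends of the final surface; and (ii) the modification performed at stage $n$ is supported outside a compact set $K_n\subset\h^3$ with $K_n\to\h^3$, so the construction stabilizes on every compact subset. Interior curvature estimates for stable minimal surfaces together with the monotonicity formula in $\h^3$ then yield a smooth subsequential limit $\Sigma$, which inherits the area-minimizing property. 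Condition (ii) gives properness of $\Sigma$, and condition (i) gives the disjoint-limit-sets conclusion.

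\textbf{The main obstacle} is to certify that the limit $\Sigma$ is \emph{homeomorphic} to $S$: no handles must collapse, no ends must merge, and no spurious topology may appear. This demands quantitative control, built into the statement of the bridge principle at infinity, of how far from the bridging region the new surface differs from the old, so that each stage perturbs the previous one only in a region that simultaneously (a) escapes every compact set of $\h^3$ and (b) encloses the newly added handle away from earlier topology. Granted such control, a standard diagonal argument transfers both the topology and the area-minimizing property to $\Sigma$.
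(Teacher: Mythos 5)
There is a genuine gap, and it lies precisely in the two places where you treat the paper's machinery as a black box. First, the bridge principle at infinity is not merely "area-minimizing in, area-minimizing out": it requires the base surface to be \emph{uniquely} area-minimizing, \emph{strictly $L^\infty$ stable}, and to have closure $\overline{S}\subset\overline{\h^3}$ a smooth manifold-with-boundary, and the whole point that makes an induction possible is that the output surface again enjoys all three properties. Your initial configuration --- infinitely many Anderson disks whose boundary curves accumulate in $\partial\h^3$ so as to model the end space $E(S)$ --- does not satisfy these hypotheses: at the accumulation points the closure is not a smooth manifold-with-boundary, and unique minimization and strict stability of an infinite configuration are not automatic (the paper proves a ``far apart'' statement, Corollary~\ref{far-apart-corollary}, only for adjoining a single geodesic disk far from a given surface, and uses it to add one disk at a time). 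Moreover, for a general end space (e.g.\ a Cantor set of ends) the ``one disjoint Jordan curve per end'' set-up is not available at all. The paper sidesteps every one of these issues by running the induction along a \emph{simple exhaustion} of $S$: each stage adds only annuli, one pair of pants, or one annulus-with-handle via one or two bridges, so every intermediate surface has finite topology, smooth closure, and (by the bridge theorem itself) is again uniquely area-minimizing and strictly $L^\infty$ stable; the ends appear only in the limit.

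Second, your condition (ii) --- that the stage-$n$ modification is ``supported outside a compact set $K_n$'' --- is not what attaching a bridge does: the new surface differs from the old one globally, and what the bridge theorem gives is smooth convergence to the old surface on compact subsets of $\overline{\h^3}\setminus\Gamma$ as the bridge width shrinks. You correctly flag the resulting need for quantitative control as the ``main obstacle,'' but you do not supply it, and it is not ``built into the statement'' in the form you assume. In the paper it is supplied by a bookkeeping device: at stage $n+1$ one chooses the bridge so narrow that the new surface is a normal graph over the previous ones on the balls $B(0,r_i)$ with graphing function and gradient bounded by $\varepsilon_{n+1}$, where $\sum_n\varepsilon_n<1$; Ascoli--Arzel\`a then gives a properly embedded limit, and the bridge theorem's conclusion that $\overline{S_{n}}$ is homeomorphic to $\overline{S}\cup P_n$ (proved via a Morse-theoretic argument excluding certain critical points near $\partial\h^3$, which in turn rests on the classification of blow-up limits as half-planes, minimal strips, and minimal skillets) is exactly what rules out collapsed handles, merged ends, or spurious topology in the limit. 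Disjointness of the limit sets of distinct ends is then obtained by confining each stage's new boundary curves inside disjoint disks of $\partial\h^3$, not by the clustering condition (i) you posit. Without these ingredients --- iterability of the bridge theorem with its stability/uniqueness hypotheses, and the summable graphical estimates replacing compact support --- your outline does not close.
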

This paper is devoted to give a positive answer to the problem above. To be more precise we prove:
\begin{theo}
Every open, connected, orientable surface can be properly embedded in $\h^3$ as an area-minimizing surface. Moreover, the embedding can be constructed in such a way that the limit sets of different ends are disjoint.
\end{theo}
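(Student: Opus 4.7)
The plan is to exhaust the target surface $S$ by a nested sequence of compact bordered subsurfaces $S_1 \subset S_2 \subset \cdots$ with $\bigcup_n S_n = S$, where $S_{n+1}$ is obtained from $S_n$ by a single elementary modification: either (a) attaching a $1$-handle to raise the genus, or (b) attaching an annulus/pair-of-pants to create a new end. Such an exhaustion exists by the classification of open orientable surfaces via genus and Freudenthal end space. I would then build inductively a sequence of properly embedded, area-minimizing surfaces $\Sigma_n \subset \h^3$, each homeomorphic to the interior of $S_n$ (with boundary sent to $\partial\h^3$), together with homeomorphisms $S_n \to \overline{\Sigma_n}$ that extend the previous one.

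\textbf{Realizing each step by the bridge principle at infinity.} To go from $\Sigma_n$ to $\Sigma_{n+1}$, I would use the bridge principle at infinity established earlier in the paper. For case (a), I pick two disjoint ideal arcs on the asymptotic boundary of $\Sigma_n$ and glue in a thin area-minimizing strip joining them near $\partial\h^3$; this produces a new area-minimizing surface with one extra handle. For case (b), I place an auxiliary area-minimizing disk (for instance a totally geodesic plane) whose ideal boundary lies in a chosen small region of $\partial\h^3$ disjoint from the previous ones, and attach it to $\Sigma_n$ by a thin bridge at infinity; this adds one new end. In both cases the change from $\Sigma_n$ to $\Sigma_{n+1}$ is supported in an arbitrarily small neighborhood of a chosen ideal arc, which I can shrink along the sequence.

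\textbf{Passage to the limit.} By choosing the $n$-th bridge to be supported in an ever-smaller horoball-type neighborhood of $\partial\h^3$ and outside an exhaustion of $\h^3$ by compact sets, on every compact subset of $\h^3$ the sequence $\Sigma_n$ eventually stabilizes. This produces a limit hypersurface $\Sigma_\infty$ that is properly embedded and homeomorphic to $S$. Area-minimality passes to the limit: any compactly supported competitor differs from $\Sigma_\infty$ on a compact set where $\Sigma_\infty$ coincides with some $\Sigma_n$, so the inequality follows from the area-minimality of $\Sigma_n$. To arrange disjoint limit sets of distinct ends, I fix in advance pairwise disjoint arcs or compact regions $\{K_e\} \subset \partial\h^3$ indexed by the ends of $S$, and place every bridge and every new asymptotic boundary component inside the appropriate $K_e$; the limit set of the end $e$ of $\Sigma_\infty$ is then contained in $\overline{K_e}$, and disjointness is preserved.

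\textbf{Main obstacle.} The principal difficulty is to combine the bridge principle with enough control to guarantee simultaneously (i) that the limit is properly embedded (no runaway accumulation of bridges in $\h^3$, no collisions between successive modifications), (ii) that the topology is exactly as prescribed (the bridges do not create spurious handles or ends), and (iii) that the ideal boundaries of different ends remain confined to the assigned disjoint regions of $\partial\h^3$. This requires a quantitative version of the bridge principle in which the size, location, and asymptotic support of the bridge can be prescribed in arbitrarily small neighborhoods of a chosen ideal arc, together with a careful combinatorial bookkeeping of ends and of the disjoint target regions in $\partial\h^3$ across the infinite induction.
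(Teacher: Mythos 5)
Your overall strategy is the same as the paper's (a simple exhaustion realized step by step via the bridge principle at infinity, then a limit), but the passage to the limit contains a genuine gap. The claim that ``on every compact subset of $\h^3$ the sequence $\Sigma_n$ eventually stabilizes'' is false: the bridge principle does \emph{not} modify the surface only near the chosen ideal arc. The new surface $\Sigma_{n+1}$ has a different ideal boundary, and by unique continuation for minimal surfaces it cannot coincide with $\Sigma_n$ on any open set; the bridge theorem only gives smooth \emph{convergence} to $\Sigma_n$ on compact subsets of $\overline{\h^3}\setminus\Gamma$ as the bridge shrinks. Hence your arguments for properness, for the topology of the limit, and for area-minimality (``a competitor lives in a compact set where $\Sigma_\infty$ coincides with some $\Sigma_n$'') all rest on a premise that never holds. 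What is actually needed, and what the paper does, is quantitative control along the induction: at stage $n$ one arranges that $\Sigma_n\cap B(0,r_i)$ is a normal graph over a subdomain of $\Sigma_i$ with $|f_{i,n}|,|\nabla f_{i,n}|\le\sum_{k=i+1}^{n}\varepsilon_k$ and $\sum_k\varepsilon_k<1$; Arzel\`a--Ascoli then yields smooth convergence to a properly embedded limit, the graph property on each ball $B(0,r_i)$ guarantees the limit is diffeomorphic to $S$ (no handles drift away, none are created), and minimality of the limit follows from the compactness theorem for area-minimizing surfaces, not from local agreement. A second point you omit: to reapply the bridge theorem at every stage one must preserve strict $L^\infty$ stability and \emph{unique} area-minimization; the theorem's conclusion hands these back after each bridge, but your case (b), which introduces a disjoint auxiliary disk, requires knowing that the disconnected configuration $\Sigma_n\cup D$ is uniquely area-minimizing and strictly stable before bridging. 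That is not automatic and needs something like the paper's ``far apart'' lemma; in the proof of the main theorem the paper avoids this entirely by adding an end with a single bridge across one boundary circle (producing a pair of pants), and adds a handle with a second bridge over the first.

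The bookkeeping for disjoint limit sets also does not work as stated. An open surface can have uncountably many ends, so one cannot fix in advance pairwise disjoint regions of $\partial\h^3\simeq\esf^2$ with nonempty interior indexed by the ends and insist that every bridge for the end $e$ lie inside $K_e$. The paper's argument is pairwise and stage-wise: two given ends are represented, at some finite stage $n$, by two annular ends of $\Sigma_n$ whose ideal boundary circles bound disjoint disks in $\partial\h^3$, and the construction ensures that all later bridges attached along a given boundary circle have their new boundary curves inside the corresponding disk; hence the limit sets of the two ends are contained in the two disjoint disks. Finally, a small but real imprecision in your case (a): a thin bridge joining two arcs of the \emph{same} ideal boundary circle produces a pair of pants (one more end), not a handle; raising the genus requires either a second bridge over the first, as in the paper, or joining two distinct boundary circles, which changes the end count and interferes with the end bookkeeping.
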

The definition of  ``area-minimizing'' and ``uniquely area-minimizing'' surfaces can be found in 
Section~\ref{sec:area-minimizing}
(Definition~\ref{def:area}.) The fundamental tool in solving this problem has been the bridge principle at infinity (Section~\ref{sec:bridge}) which can be stated in these terms:
\begin{theo}[\bf Bridge principle at infinity]
 Let $S$ be an open, properly embedded, uniquely area-minimizing surface 
  in $\h^3$ 
 whose closure $\overline{S}\subset\overline{\h^3}$ is a smooth manifold-with-boundary.  
 Let $\Gamma$ be a smooth arc in $\partial\h^3$ {meeting
$\partial S$ orthogonally} and satisfying $\Gamma \cap \partial S=\partial \Gamma$.
   
  Consider a sequence of bridges $P_n$ 
 on $\partial\h^3$ that shrink nicely to $\Gamma$. If $S$ is {\bf strictly $L^\infty$ stable} (see Definition~\ref{def:linf}), then for all large enough $n$, there exists a strictly $L^\infty$ stable,  
 uniquely area-minimizing surface $S_n$ that is properly embedded in $\h^3$ such that:
 \begin{enumerate}[\rm 1)]
 \item $\overline{S_n}$ is a smooth, embedded manifold-with-boundary in $\overline{\h^3}$.
 \item $\partial S_n = (\partial  S \setminus \partial P_n) \cup ( \partial P_n \setminus \partial  S)$.
  \item The sequence $\overline{S_n}$  converges smoothly to       
   $\overline{S}$ on compact subsets of $\overline{\h^3}\setminus\Gamma$.
  \item The surface $\overline{S_n}$ is homeomorphic to $\overline{S} \cup P_n$.
  \end{enumerate} 
\end{theo}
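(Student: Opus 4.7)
The plan is to produce $S_n$ as a solution of the asymptotic Plateau problem with prescribed ideal boundary $\Gamma_n := (\partial S \setminus \partial P_n) \cup (\partial P_n \setminus \partial S)$, and then to exploit the strict $L^\infty$-stability and unique area-minimization of $S$ to identify the resulting limit and pin down the topology. First, for each large $n$ I would apply Anderson's solution of the asymptotic Plateau problem to produce an area-minimizing integral $2$-current with ideal boundary $\Gamma_n$; smoothness of $\Gamma_n$ and boundary regularity at infinity then represent this current as a properly embedded smooth surface $S_n$ whose closure $\overline{S_n} \subset \overline{\h^3}$ is a smooth manifold-with-boundary, yielding conclusions~(1) and~(2).

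The heart of the argument is establishing~(3). Uniform local area bounds for the $S_n$ are obtained by comparing against a ``bridged'' competitor consisting of $\overline{S}$ glued to a thin strip capped over $P_n$. These bounds, together with compactness of area-minimizing currents, produce a subsequential limit $S_\infty$ that is itself area-minimizing. Smooth convergence $\Gamma_n \to \partial S$ on compact subsets of $\partial\h^3 \setminus \Gamma$, combined with boundary regularity at infinity, forces the ideal boundary of $S_\infty$ to coincide with $\partial S$ outside $\Gamma$ and prevents mass from escaping to infinity away from $\Gamma$. The unique area-minimization hypothesis on $S$ then gives $S_\infty = S$, and along the full sequence by a standard subsequence-of-subsequence argument. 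Allard-type interior and boundary regularity upgrade this currents convergence to smooth convergence on compact subsets of $\overline{\h^3} \setminus \Gamma$.

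For the topological claim~(4), one passes to a near-Euclidean model at each point of $\Gamma$ and analyzes $S_n$ locally: the nice shrinking of $P_n$ toward $\Gamma$, together with Allard's boundary regularity theorem in the half-space model, shows that $S_n$ is a graph over $P_n$ near the interior of $\Gamma$ and a graph over $\overline{S} \cup P_n$ near its endpoints. This identifies $\overline{S_n}$ homeomorphically with $\overline{S} \cup P_n$. Persistence of strict $L^\infty$-stability for $S_n$ for large $n$ follows from continuity of the relevant stability quantity under the smooth convergence established above, and unique area-minimization of $S_n$ reduces, via strict stability and the quantitative smooth closeness $\overline{S_n} \to \overline{S}$, to a standard argument ruling out a second area-minimizer near $S_n$. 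The main obstacle is precisely controlling $S_n$ near the bridge: one must show that the area-minimizer with asymptotic data $\Gamma_n$ really connects $\partial S$ across $\Gamma$ by a single thin strip, rather than pinching off, doubling back, or concentrating mass in an unexpected way. Strict $L^\infty$-stability of $S$ is what provides the quantitative neighborhood-uniqueness that makes this control possible, and orthogonality of $\Gamma$ to $\partial S$ ensures the bridge attaches without a corner singularity.
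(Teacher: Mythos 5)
Your treatment of conclusions (1)--(3) is essentially the paper's: existence of an area-minimizer $S_n$ with the prescribed ideal boundary comes from the Basic Existence Theorem~\ref{th:principal}, and smooth convergence $\overline{S_n}\to\overline{S}$ on compact subsets of $\overline{\h^3}\setminus\Gamma$ follows from the Compactness Theorem~\ref{compactness-theorem} together with the hypothesis that $S$ is uniquely area-minimizing (a subsequence-of-subsequences argument). Two small remarks: the paper works with chains mod $2$ rather than integral currents, which is what makes the boundary-multiplicity dichotomy clean, and the uniform local area bound is automatic for minimizers (half the area of the boundary of a ball), so no bridged comparison surface is needed.

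The genuine gap is everywhere the bridge scale matters: conclusion (4), the strict $L^\infty$ stability of $S_n$, and the unique area-minimization of $S_n$. You assert that Allard boundary regularity plus nice shrinking shows $S_n$ is a graph over $P_n$ near $\Gamma$, and that stability and uniqueness of $S_n$ follow ``by continuity'' from the smooth convergence away from $\Gamma$. Neither works as stated: the convergence is only on compact subsets of $\overline{\h^3}\setminus\Gamma$, so it gives no control at scales comparable to the bridge width, which is exactly where extra topology, a second minimizer, or a bounded Jacobi field could concentrate; and strict $L^\infty$ stability of the fixed surface $S$ says nothing about the degenerating geometry near $\Gamma$, since it is not a quantity that passes to this non-smooth limit regime. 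The paper supplies the missing control by a blow-up analysis (Claim~\ref{possible-limits-claim}): rescaling $S_n$ at points approaching $\partial\h^3$ yields, by the definition of nicely shrinking bridges, only four possible limits --- a vertical halfplane, a minimal strip, a minimal skillet, or the empty set --- and an entire preceding section is devoted to proving that strips and skillets are uniquely area-minimizing and strictly $L^\infty$ stable (Theorems~\ref{th:strip} and~\ref{th:skillet}). With that classification in hand: a competing minimizer $T_n$ is excluded by maximizing the distance from $S_n$ to $T_n$ and blowing up at the maximizing point, producing a bounded Jacobi field with an interior maximum on one of the stable models (Claim~\ref{cl:uniqueness}); a bounded Jacobi field on $S_n$ is excluded the same way, using Theorem~\ref{th:boundary-point} to know its maximum is attained; and the homeomorphism $\overline{S_n}\cong\overline{S}\cup P_n$ is obtained not from a graphical description near $\Gamma$ but from a Morse-theoretic argument: Claim~\ref{cl:a} (again via the blow-up classification and the structure of the strip) shows that in a thin slab $\{0\le z\le a\}$ the height function has no critical point whose normal points into the component containing $\Gamma$, so no extra genus can hide near the bridge (Claim~\ref{cl:diffeo}). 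Without an argument of this kind --- some classification of what $S_n$ can look like after rescaling at the bridge --- your steps for (4), for stability, and for uniqueness remain assertions rather than proofs.
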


This bridge principle gives us some flexibility in order to construct properly embedded area-minimizing surfaces in $\h^3$ with arbitrary infinite topology and some kind of regularity at infinity. 
\begin{theo}
If
 $S$ is  a connected, open, orientable surface with infinite topology, then there exists a proper, area-minimizing
 embedding of
$S$ into $\h^3$ such $\overline{S}$ is a smooth embedded manifold-with-boundary
except at a single point of $\partial S\subset \partial\h^3$.
\end{theo}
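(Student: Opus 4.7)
The strategy is to realize $S$ as a nested union $\bigcup_n S_n$ of properly embedded, area-minimizing surfaces, where each $S_n$ is obtained from $S_{n-1}$ by one application of the Bridge Principle at Infinity, and where all the bridges accumulate at a single chosen point $p\in\partial\h^3$. By the classification of noncompact orientable surfaces, any connected, open, orientable surface with infinite topology can be exhausted by compact subsurfaces-with-boundary $T_0\subset T_1\subset \cdots$ in which $T_0$ is a disk and each $T_n$ is obtained from $T_{n-1}$ by attaching a single $1$-handle. Because attaching a $1$-handle is precisely the topological effect of attaching a bridge, this reduces the construction of $S$ to an inductive application of the Bridge Principle.

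For the base case, take $S_0$ to be a totally geodesic half-plane. Its closure in $\overline{\h^3}$ is a smooth manifold-with-boundary, it is uniquely area-minimizing, and strict $L^\infty$ stability should follow since its second fundamental form vanishes. Mark a point $p$ on $\partial S_0\subset \partial\h^3$; this will be the single exceptional point in the conclusion. For the inductive step, assume $S_{n-1}$ satisfies the hypotheses of the Bridge Principle. Choose an arc $\Gamma_n\subset\partial\h^3$ contained in the ball of radius $1/n$ about $p$, disjoint from all previously-used bridges, whose endpoints lie on $\partial S_{n-1}$ orthogonally and whose attachment to $S_{n-1}$ realizes the $n$-th $1$-handle of the combinatorial exhaustion. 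The Bridge Principle then produces a surface $S_n$ satisfying the same hypotheses, and by choosing the approximating bridge $P_n$ sufficiently thin we can arrange that $\overline{S_n}$ agrees with $\overline{S_{n-1}}$ outside an arbitrarily small neighborhood of $p$.

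For the passage to the limit, note that every compact subset $K\subset \overline{\h^3}\setminus\{p\}$ is eventually disjoint from all the bridges $P_k$, so $\overline{S_n}\cap K$ stabilizes. The common limit defines a properly embedded surface $S_\infty\subset \h^3$ whose closure is a smooth embedded manifold-with-boundary on $\overline{\h^3}\setminus\{p\}$. Because area-minimization with respect to compactly supported variations is a local condition and each $S_n$ is area-minimizing, so is $S_\infty$. By construction, the topological type of $S_\infty$ is the direct limit of the $T_n$, which is $S$.

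The main obstacle lies in the combinatorial-geometric bookkeeping needed to sustain the induction through infinitely many steps. On the topological side, one must check that every infinite orientable topological type (infinite genus with one end, a Cantor set of ends, and combinations thereof) admits an encoding as a sequence of $1$-handle attachments whose feet cluster at the chosen point $p$ while remaining mutually disjoint, and the bridges must be ordered so that $\partial S_{n-1}$ always contains free room near $p$ to place $\Gamma_n$. On the analytic side, the orthogonality and nicely-shrinking hypotheses of the Bridge Principle must be achievable for bridges confined to arbitrarily small neighborhoods of $p$, and strict $L^\infty$ stability plus unique area-minimization must be delivered in a form strong enough that the induction can be iterated indefinitely without the constants degenerating as $n\to\infty$.
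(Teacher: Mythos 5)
There is a genuine gap, and it sits at the heart of your limiting argument. You claim that by taking the bridge $P_n$ thin enough you can ``arrange that $\overline{S_n}$ agrees with $\overline{S_{n-1}}$ outside an arbitrarily small neighborhood of $p$,'' and you then conclude that $\overline{S_n}\cap K$ \emph{stabilizes} for every compact $K\subset\overline{\h^3}\setminus\{p\}$. This is false: $S_n$ and $S_{n-1}$ are distinct minimal surfaces (they have different boundaries), so by unique continuation they cannot coincide on any open set; the Bridge Theorem (conclusion (2)/(3) of Theorem~\ref{th:bridge}) only gives smooth \emph{convergence} to $\overline{S_{n-1}}$ on compact subsets of $\overline{\h^3}\setminus\Gamma_n$ as the bridge shrinks, never agreement. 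What does stabilize away from $p$ is the boundary curve, not the surface. Consequently your deductions of properness, of the smooth-manifold-with-boundary structure of $\overline{S_\infty}$ away from $p$, and of the identification of the topological type as the direct limit of the $T_n$ all rest on a false premise. The repair is exactly the quantitative bookkeeping carried out in the proof of Theorem~\ref{th:first}: at step $n$ choose the bridge so thin that $S_n$ is a normal graph over $S_{n-1}$ on a large ball with $C^1$ error at most $\varepsilon_n$, with $\sum\varepsilon_n<\infty$ (properties (IV$_n$)); then Arzel\`a--Ascoli gives a properly embedded area-minimizing limit, the compatible graph structure identifies its topology, and smoothness of the closure away from $p$ follows from the Boundary Regularity Theorem~\ref{boundary-regularity-theorem} applied where the (stabilized) boundary curves are smooth. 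None of this is in your write-up.

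Separately, the ``combinatorial-geometric bookkeeping'' that you defer to the last paragraph is not a side issue but the other place where real work is required, and the paper's route is designed to avoid precisely the difficulty you create. You must show that every $1$-handle of the exhaustion can be realized by a bridge whose arc $\Gamma_n$ lies in the $1/n$-ball about $p$, meets $\partial S_{n-1}$ orthogonally with $\Gamma_n\cap\partial S_{n-1}=\partial\Gamma_n$, and — for genus-increasing steps — runs over a previously placed bridge, which forces the relevant boundary arcs to have already been steered into that shrinking ball; you assert this can be sustained indefinitely but give no argument. The paper instead prefabricates the topology in pieces $A_n$ (annulus) and $T_n$ (disk with handle) placed in disjoint slabs $X_n,Y_n$ marching off in the $x$-direction, uses Corollary~\ref{far-apart-corollary} (after shrinking the new piece) to keep the disjoint union $\Sigma_n\cup W_{n+1}$ uniquely area-minimizing and strictly $L^\infty$ stable before bridging, and takes the single non-smooth point to be $\infty$ in the upper half-space model, so that $\partial\Sigma_n\cap\{x<r\}$ literally stabilizes for each $r$. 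Your single-finite-point variant may well be workable, but as written it is an outline whose two load-bearing steps — the limit argument and the placement of infinitely many bridges at one point — are respectively incorrect and unproved.
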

Finally, we would like to point out that the same methods allow us to construct properly embedded 
area-minimizing surfaces so that the limit set is the whole ideal boundary $\partial\h^3$.
\vskip 5mm

\section{Preliminaries}
Throughout
 this paper $\h^{n+1}$ will represent the (n+1)-dimensional hyperbolic space. 
We will use the models:
\begin{enumerate}
\item {\bf Poincaré's ball model:}  the open unit ball $\B^{n+1}$ of $\R^{n+1}$ endowed with Poincaré's metric $\displaystyle ds^2:= 4  \frac{\sum_{i=1}^{n+1} dx_i^2}{(1-\sum_{i=1}^{n+1} x_i^2)^2}$.
\item  {\bf Poincaré's half-space model:} the upper half-space $\{x_{n+1} >0 \} \subset \R^{n+1}$, endowed with the metric $\displaystyle ds^2:=  \frac{1}{x_{n+1}^2} \sum_{i=1}^{n+1} dx_i^2$.
\end{enumerate} Let $\overline{\h}^{n+1}$ denote the usual compactification of $\h^{n+1}$. As we 
mentioned in the introduction, we shall denote the ideal boundary as 
$\partial\h^{n+1}:= \overline{\h}^{n+1} \setminus \h^{n+1}$. 
Observe that $\partial\h^{n+1}$ is diffeomorphic to the sphere $\esf^n$.
(In the ball model, it is $\partial \B^{n+1}$ and in the upper half space model
it is $\{x: x_{n+1}=0\}\cup \{\infty\}$.)

\subsection{Simple exhaustions}
 One of the main tools in the proofs of the theorems
stated in the introduction is the existence of a particular kind of exhaustion for any open
surface. 
In \cite{fmm}, Ferrer, Meeks and the first author  proved that every
open,
 connected,  
 orientable  surface $M$ 
 has a {\em simple exhaustion}, i.e., a smooth, compact
 exhaustion $M_1\subset M_2\subset \cdots$ such that:
\begin{enumerate}
\item $M_1$ {is} a disk.
\item For all $n \in \n$, each component of $M_{n+1} \setminus \Int(M_n)$ has one boundary component in
$\partial M_n$ and at least one boundary component in $\partial M_{n+1}$.
\item\label{infinite-topology-item}
 If $M$ has infinite topology, then for all $n \in \n$, $M_{n+1} \setminus \Int(M_n)$ contains a unique nonannular component; that component is 
topologically a pair of pants or an annulus with a handle.
\item If $M$ has finite topology (with genus $g$ and $k$ ends),
property~\ref{infinite-topology-item} holds for $n\leq g+k$, and when $n> g+k$, all of the
components of $M_{n+1} \setminus \Int(M_n)$ are annular.
\end{enumerate}
\begin{figure}[htbp]
    \begin{center}
        \includegraphics[width=.95\textwidth]{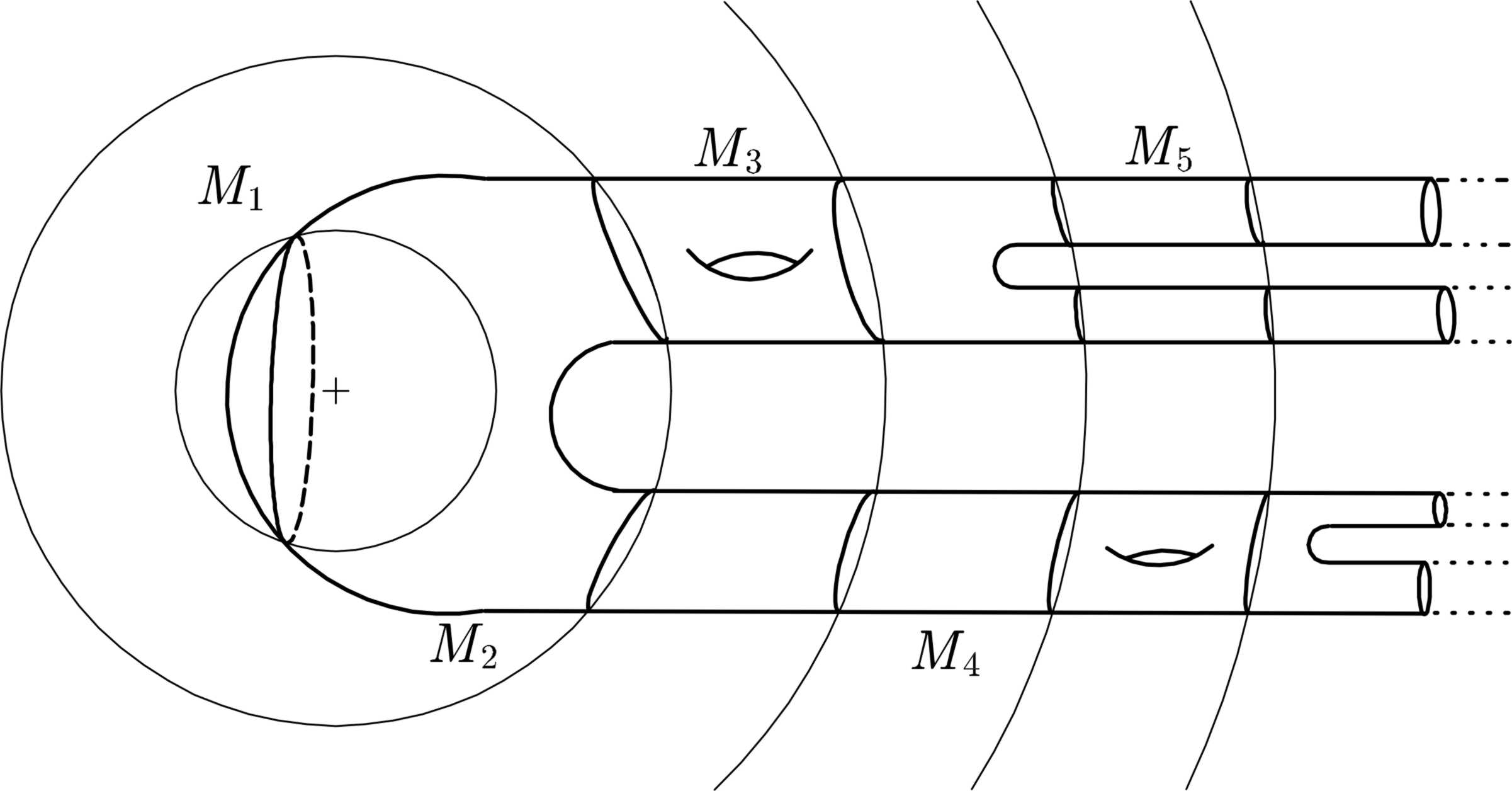}
           \end{center}
   \caption{A simple exhaustion of $M$.} \label{fig:simple}
\end{figure}

\begin{remark}\label{hare-remark}
For the purposes of this paper, one could replace~\ref{infinite-topology-item} in the definition of simple exhaustion
by the slightly weaker condition:
each component of $M_{n+1}\setminus \Int(M_n)$ is an annulus, a pair of pants, or an annulus
with a handle.  Which definition one uses does not affect any of the proofs.
\end{remark}

\subsection{Limit sets}
We are also interested in the asymptotic behavior of the minimal
surfaces we are going to construct. So, we need some background about
the limit set of an end.

\begin{definition}\label{def:limit}
  Let $\psi\colon S \to \h^3$ be an immersion of a
  surface $S$ with possibly non-empty boundary. The {\bf limit set} of
  $S$ is $L(S)=\bigcap_{\alpha \in I}\overline{\psi(S\setminus C_\alpha)},$ where
  $\{ C_{\alpha}\}_{\alpha \in I}$ is the collection of compact subdomains of
  $S$ and the closure $\overline{\psi(S\setminus C_\alpha)}$ is taken in
  $\overline{\h^3}$. The {\bf limit set $L(E)$ of an
    end $E$ of $S$} is defined to be the intersection of the limit
  sets of all properly embedded subdomains of $S$ with compact
  boundary which represent $E$. 
\end{definition}

Note that $L(S)$ is a closed set, and that if $E$ is an end of $S$, then $L(E)$ is a
closed subset of $L(S)$.
Note also that $\psi:S\to \h^3$ is proper if and only if $L(S)\subset \partial \h^3$.
(Recall that a continuous map $f:X \to Y$ between topological spaces
is called {\em proper} provided the inverse image of every compact 
set is compact.)
Thus if $S$ is an open, proper submanifold of $\h^3$, then $L(S)$
is the equal to the set theoretic boundary $\partial S=\overline{S}\setminus S$.
More generally, if $S$ is an open, proper submanifold of an open subset $U$ of $\h^3$,
then $L(S) = \partial S = \overline{S}\setminus S$, where $\overline{S}$ denotes the
closure of $S$ in $\overline{\h^3}$.

\begin{proposition}[Convex Hull Property]\label{convex-hull-proposition}
Let $M$ be an open minimal surface  in $\h^3$.
Then $M$ is contained in the convex hull of its limit set.
In other words, if $\Sigma\subset \h^3$ is a totally geodesic plane
and if $L(M)$ lies in the closure $N$ of one component of $\h^3\setminus \Sigma$,
then $M$ also lies in $N$.
\end{proposition}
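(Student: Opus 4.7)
The plan is to combine convexity of the distance to $\Sigma$ with minimality of $M$ via the maximum principle. Let $d : \h^3 \to \R$ denote the signed hyperbolic distance to $\Sigma$, taken positive on the open half-space $\h^3 \setminus N$. A standard computation for the distance to a totally geodesic plane in constant curvature $-1$ gives $\operatorname{Hess}(d) = \tanh(d)\,(g - \nabla d \otimes \nabla d)$ on $\{d > 0\}$. Tracing over the tangent plane of $M$ yields
\[
\Delta_M d \;=\; \tanh(d)\,\bigl(2 - |\nabla^M d|^2\bigr) \;\geq\; \tanh(d) \;>\; 0 \quad \text{on } M \cap \{d > 0\},
\]
so $d|_M$ is strictly subharmonic there and admits no interior local maximum where $d > 0$.

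Arguing by contradiction, suppose $M \not\subseteq N$ and set $s := \sup_M d \in (0, +\infty]$. If $s = +\infty$, a sequence $p_n \in M$ with $d(p_n) \to \infty$ escapes every compact subset of $\h^3$, and the signed hyperbolic distance to $\Sigma$ can go to $+\infty$ only as $p_n$ approaches a point of $\partial \h^3$ outside $N$; any subsequential limit $q$ of such a sequence therefore violates $L(M) \subseteq N$. So $s$ is finite. Choose $p_n \in M$ with $d(p_n) \to s$ and pass to a subsequential limit $q \in \overline{\h^3}$. The case $q \in M$ is forbidden by the first paragraph; the case $q \in \h^3 \setminus M$ gives $q \in L(M) \subseteq N$, so $d(q) \leq 0$, contradicting continuity of $d$; and if $q \in \partial \h^3$, then $q$ lies simultaneously in $L(M) \cap \partial \h^3 \subseteq N \cap \partial \h^3$ and in the closure of $\{d > 0\}$ restricted to $\partial \h^3$. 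In the ball model these are the two closed hemispheres of $\partial \h^3$ meeting along $\partial_\infty \Sigma$, so $q \in \partial_\infty \Sigma$.

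To dispose of this last case I use a rescaling argument. The subgroup of $\operatorname{Isom}(\h^3)$ preserving $\Sigma$ acts transitively on each equidistant surface $\{d = t\}$, so I can choose isometries $T_n$ fixing $\Sigma$ for which $T_n(p_n)$ converges to a point $p^* \in \h^3$ with $d(p^*) = s$. The translated surfaces $M_n := T_n(M)$ are still minimal, with $\sup_{M_n} d = s$ and limit sets contained in $N$. Using local area bounds from the monotonicity formula (which hold in particular under the area-minimizing hypothesis used throughout the paper), I extract a subsequential varifold limit $M_\infty$ supported near $p^*$ with $d|_{M_\infty} \leq s$. Allard regularity makes $M_\infty$ smooth near $p^*$, and $p^*$ becomes an interior maximum of $d$ on $M_\infty$ with $d(p^*) = s > 0$, contradicting the strict subharmonicity from the first paragraph.

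The main obstacle is the rescaling and compactness step just described. A direct attack avoiding it would require a strong maximum principle at boundary points of $\overline{M}$ lying in $\partial_\infty \Sigma \subset \partial \h^3$, which in turn would demand regularity of $\overline{M}$ at the ideal boundary that is not available a priori.
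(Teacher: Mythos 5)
Your reduction to subharmonicity of the signed distance $d$ is fine, but the argument has two genuine gaps. First, the dismissal of the case $s=+\infty$ rests on a false claim: a sequence $p_n\in M$ with $d(p_n)\to+\infty$ need not approach a point of $\partial\h^3$ outside $N$. All equidistant surfaces $\{d=t\}$ share the same ideal boundary circle $\partial_\infty\Sigma$, so points with $d\to+\infty$ can perfectly well converge to a point of $\partial_\infty\Sigma$, which lies in $N$ (in the half-space model with $\Sigma=\{x=0\}$, take $p_n=(1/n,0,1/n^2)$, so $d(p_n)=\operatorname{arcsinh}(n)\to\infty$ while $p_n\to(0,0,0)\in\partial_\infty\Sigma$). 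So this case is not eliminated and would itself need a limiting argument. Second, and more seriously, the rescaling/compactness step is not available at the stated level of generality: the proposition is for an arbitrary open minimal surface $M$ (and the paper applies it to such surfaces, e.g.\ in the strong local uniqueness theorem and in the skillet analysis), whereas your extraction of a varifold limit with Allard regularity needs uniform local area bounds, which a general (possibly non-proper, possibly high-multiplicity) minimal surface simply does not satisfy. The monotonicity formula gives lower density bounds, not upper area bounds, so your parenthetical appeal to it does not help, and retreating to the area-minimizing case would prove a weaker statement than the one used in the paper. There is also a minor unaddressed point in the finite-$s$ analysis: a limit point $q\in\h^3\setminus M$ lies in $L(M)$ only after you argue that the sequence leaves every compact subdomain of $M$ (which does hold here, but should be said).

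For comparison, the paper avoids all limiting analysis by a better choice of exhaustion function: normalize so that $N=\{(x,y,z):z\ge0,\ x^2+y^2+z^2\ge1\}$ in the upper half-space model and consider $r^2=x^2+y^2+z^2$ on $\overline{M}$. Its level sets are concentric hemispheres, i.e.\ totally geodesic (minimal) planes, and $r^2$ extends to the compact set $\overline{M}\subset\overline{\h^3}$ (with value $+\infty$ at $\infty$), so its minimum is attained; the strong maximum principle for two tangent minimal surfaces forbids attainment at a point of $M$, and on $L(M)\subset N$ one has $r^2\ge1$, which gives $M\subset N$ in two lines. The essential trick you are missing is to replace the hyperbolic distance to $\Sigma$, whose extremum escapes to $\partial_\infty\Sigma$, by a function whose sublevel sets stay away from the ideal boundary, so that compactness of $\overline{M}$ alone forces the touching point to exist.
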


Recall that, in general,  $L(M)$ may include points in $\h^3$ and also points in $\partial \h^3$.

\begin{proof}
We can assume in the upper half space model that
\[
  N = \{ (x,y,z): \text{$z\ge 0$ and $x^2+y^2 + z^2 \ge 1$} \}.
\]
The level sets of the function $(x,y,z)\mapsto x^2+y^2+z^2$
are minimal surfaces, so by the strong maximum principle,
its restriction to $\overline{M}$ cannot attain its minimum at
a point of $M$.
\end{proof}

\begin{theorem}[Strong local uniqueness theorem]\label{strong-local-uniqueness-theorem}
Let  $M$ be an open minimal
surface in $\h^3$ and $\Gamma$ be a curve in $\partial\h^3$ such
that $M'=M\cup\Gamma$ is a smooth, embedded submanifold (with boundary) of $\overline{\h^3}$.
Then each point $p\in \Gamma$ has a neighborhood $U\subset \overline{\h^3}$
with the following property: if $S\subset \h^3$ is an open minimal surface with
$L(S)\subset M'\cap U$, then $S\subset M\cap U$.
\end{theorem}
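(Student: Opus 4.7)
The plan is to construct a local foliation of a neighborhood of $p$ in $\overline{\h^3}$ by minimal surfaces asymptotic to translates of $\Gamma$, and then apply the strong maximum principle to the corresponding foliation function restricted to $S$. I would first pass to the upper half-space model and compose with a hyperbolic isometry so that $p$ is the origin and $\Gamma$ is tangent to the $x$-axis at $p$. Minimal surfaces in $\h^3$ that extend smoothly to $\partial\h^3$ meet the ideal boundary orthogonally, so near $p$ the surface $M'$ is a small graph $y=\phi(x,z)$ over the totally geodesic half-plane $\{y=0,\,z\ge 0\}$, with $\phi$ vanishing to first order at the origin.

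The foliation is produced using the horizontal translations $T_t\colon(x,y,z)\mapsto(x,y+t,z)$, which are isometries of $\h^3$. The translated surfaces $M_t:=T_t(M)$ are minimal, their ideal boundaries $\Gamma_t:=T_t(\Gamma)$ are smooth curves in $\partial\h^3$, and each $M_t':=M_t\cup\Gamma_t$ is a smooth manifold-with-boundary in $\overline{\h^3}$. Because $M$ is transverse to $\partial_y$ at $p$ (its tangent plane there is the $(x,z)$-plane) and $\Gamma$ is transverse to $\partial_y$ at $p$, there exist $\epsilon>0$ and a neighborhood $U$ of $p$ in $\overline{\h^3}$ such that $\{M_t'\cap U:|t|<\epsilon\}$ is a smooth foliation of $U$ with pairwise disjoint leaves. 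Let $h\colon U\to(-\epsilon,\epsilon)$ denote the smooth foliation function, so $h^{-1}(t)=M_t'\cap U$ and $h$ vanishes precisely on $M'\cap U$.

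Now suppose $S$ is an open minimal surface with $L(S)\subset M'\cap U$; then $h$ extends continuously by $0$ along $L(S)$. I would show $h\equiv 0$ on $S\cap U$ by analyzing $\sigma:=\sup h|_{\overline{S}\cap U}$. After shrinking $U$ if necessary, the leaves $M_t$ for $|t|$ close to $\epsilon$ serve as minimal-surface barriers that trap any component of $S$ meeting $U$: a first tangential contact of $S$ with such a leaf would, by the strong maximum principle, force local (hence global by unique continuation) agreement with the leaf, and that in turn would force $L(S)\subset M_t'$, already disjoint from $M_0'$ and so contradicting the hypothesis on $L(S)$. Granted this, $\sigma$ is attained on $\overline{S}\cap U$, and since $h=0$ on $L(S)$, if $\sigma>0$ the maximum occurs at an interior point $q^*\in S$ where $S$ is tangent to $M_\sigma$ from the side $\{h\le\sigma\}$. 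The strong maximum principle then forces $S=M_\sigma$ near $q^*$, hence $S\subset M_\sigma$ by unique continuation, and thus $L(S)\subset M_\sigma'$, which is disjoint from $M'=M_0'$ in $U$ when $\sigma\ne 0$. This contradiction forces $\sigma\le 0$, and the symmetric argument for $\inf h$ gives $h\equiv 0$ on $S\cap U$, i.e.\ $S\cap U\subset M$; the same barrier argument forces $S\subset U$, yielding $S\subset M\cap U$.

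The step I expect to be most delicate is the barrier/trapping argument that ensures $\sigma$ is attained on $\overline{S}\cap U$ (rather than approached along $S\cap\partial U$). It rests on each $M_t$ being a genuine two-sided minimal barrier that is smooth all the way to $\partial\h^3$, which in turn uses the orthogonality of $M'$ to $\partial\h^3$ at $\Gamma$ and the fact that $T_t$ is an isometry of $\h^3$ (so the foliation inherits the smoothness of $M'$ at $\Gamma$). Once this trapping is in place, the rest is standard strong-maximum-principle bookkeeping.
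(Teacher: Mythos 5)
Your core mechanism---foliating a neighborhood of $p$ by the horizontal translates $M_t'=T_t(M')$ (minimal, since horizontal translations are isometries of the half-space model) and running the strong maximum principle on the foliation parameter restricted to $S$---is exactly the mechanism of the paper's proof. The genuine gap is the confinement step, which you yourself flag as delicate: the ``trapping'' argument does not work as stated. The leaves $M_t\cap U$ with $|t|$ near $\epsilon$ are minimal surfaces \emph{with boundary} inside $\h^3$ (each has a lateral edge where it meets $\partial U$), and a minimal surface can cross such a leaf transversally, or slip around its edge and leave $U$ through the lateral part of $\partial U$, without ever making a tangential contact; no ``first tangential contact'' is forced by the hypothesis $L(S)\subset M'\cap U$, because there is no sweep starting from a position where $S$ is known to be absent. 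Consequently you have not ruled out that $\sigma=\sup h|_{\overline{S}\cap \overline{U}}$ is attained (or only approached) along $S\cap\partial U$ at a non-tangential point, where the maximum principle gives nothing; nor have you shown $S\subset U$ at all. Both facts are needed before the interior-tangency bookkeeping can even begin, and neither follows from the local foliation alone.

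The paper closes exactly this gap with the convex hull property (Proposition~\ref{convex-hull-proposition}): it takes $U=\B(p,r)$ to be a Euclidean ball centered at $p\in\partial\h^3$, whose bounding hemisphere is totally geodesic, so that $L(S)\subset M'\cap U$ forces $S$ itself to lie inside such a (slightly smaller) ball. Then $\overline{S}=S\cup L(S)$ is compact and contained in the region foliated by the translates of (a shrunken) $M'$, and the extremum of the foliation parameter is attained either on $L(S)$, where it vanishes, or at an interior point of $S$, where the strong maximum principle applies exactly as you describe. If you replace your leaf-trapping step by this totally geodesic barrier argument, and choose $U$ to be such a ball contained in the foliated open set, your proof becomes correct and coincides with the paper's.
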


\begin{proof}
We use the upper halfspace model.
Let $v$ be a vector normal to $\overline{M}$ at $p$.  Note that $v$ is horizontal.
(One easily shows, using totally geodesic barriers, that
$M'$ meets $\partial \h^3$ orthogonally. See, for example, \cite{hl}.)
We may assume that each line in $\RR^3$ parallel to $v$ intersects $M'$ at most
once.  (Otherwise replace $M'$ by $M'\cap \B(p,R)$ with $R$ sufficiently small.)
  It follows that $M'$ and its translates by multiples of $v$ foliate an open subset $W$
of $\overline{\h^3}$.  Choose $r>0$ small enough that $\B(p,r)\cap \overline{\h^3}$ is contained in $W$.
Then $U:=\B(p,r)$ has the desired property.
For suppose that $S$ is a minimal surface in $\h^3$ with $L(S)\subset M'\cap U$.
By the convex hull property (Propostion~\ref{convex-hull-proposition}), $S\subset U$.
The strong maximum principle then forces $S$ to lie in $M$.
(Consider the maximum value of $|t|$ such that $S$ intersects $M'+ tv$.)
\end{proof}

%%%%%%%%
\section{Area-minimizing surfaces}\label{sec:area-minimizing}

In this section, we present some fundamental theorems about area-minimizing surfaces
in hyperbolic space.  Those theorems will be used repeatedly in the rest of the paper.

\begin{definition} \label{def:area}
Suppose $S \subset \h^3$ is a (possibly nonorientable) 
compact surface with unoriented boundary. 
The surface $S$ is called
{\bf area-minimizing}\footnote{In the literature, ``area-minimizing" as defined here is usually referred
to as ``area-minimizing mod $2$".}
 if $S$ has least area among all surfaces (orientable or nonorientable)
with the same boundary. For a noncompact surface $S$, we say that $S$ is 
area-minimizing provided each compact portion of it is area-minimizing.

Now suppose that $S$ is an open, properly embedded, area-minimizing surface
in $\h^3$.
  We say that $S$ is {\bf uniquely area-minimizing}
if it is the only area-minimizing surface with boundary $\partial S$.
\end{definition}

For example, the convex hull property (Proposition~\ref{convex-hull-proposition})
implies that a  totally geodesic plane is uniquely area-minimizing.

\begin{theorem}[Boundary Regularity Theorem]\label{boundary-regularity-theorem}
Let $M$ be an open, area-minimizing surface in $\h^3$.
Suppose that $W$ is an open subset of $\overline{\h^3}$  with the following property:
\[
  L(M) \cap W \subset \Gamma
\]
where $\Gamma$ is a smooth, connected, properly embedded curve in $W\cap \partial \h^3$.
Then either $L(M)\cap W$ is empty, or $L(M)\cap W=\Gamma$ and $M\cup\Gamma$ is 
a smooth manifold-with-boundary.
\end{theorem}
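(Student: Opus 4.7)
The plan is to combine a single local boundary-regularity fact with a connectedness/clopen argument. Both halves of the conclusion—the dichotomy $L(M)\cap W\in\{\emptyset,\Gamma\}$ and the smoothness of $M\cup\Gamma$—will be consequences of the following local assertion: if $p\in L(M)\cap\Gamma$, then there is an open neighborhood $U$ of $p$ in $\overline{\h^3}$ such that $L(M)\cap U=\Gamma\cap U$ and $(M\cap U)\cup(\Gamma\cap U)$ is a smooth manifold-with-boundary.

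To establish this local assertion, I would invoke boundary regularity at infinity for area-minimizing surfaces in $\h^3$, of Hardt--Lin type. Near $p$, the hypothesis $L(M)\cap W\subset\Gamma$ says that the ideal boundary of $M$ is locally contained in a smooth arc of $\partial\h^3$. The standard regularity theory then gives that $M$, together with the portion of $\Gamma$ it actually limits to, extends smoothly to a manifold-with-boundary meeting $\partial\h^3$ orthogonally. (Orthogonality is essentially forced by totally geodesic barriers, as noted in the proof of the strong local uniqueness theorem.) In particular, once $p\in L(M)\cap\Gamma$, an entire neighborhood of $p$ in $\Gamma$ must sit in $L(M)$, because the smooth boundary of the extended surface is a one-dimensional submanifold of $\partial\h^3$ contained in $\Gamma$ and passing through $p$.

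Now set $A:=L(M)\cap W$ and assume $A\ne\emptyset$. Since $L(M)$ is closed in $\overline{\h^3}$, the set $A$ is relatively closed in $W$; because $\Gamma$ is properly embedded in $W$, $A$ is a closed subset of $\Gamma$. By the local assertion, every point of $A$ admits an open neighborhood in $\Gamma$ contained in $A$, so $A$ is also open in $\Gamma$. Since $\Gamma$ is connected, $A=\Gamma$. The local manifold-with-boundary structures provided by the local assertion then patch together (they agree on overlaps because each is an extension of the same surface $M$) to show that $M\cup\Gamma$ is a smooth manifold-with-boundary globally in $W$.

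The main obstacle is the local boundary regularity at a single point of $\Gamma$. The delicate point is that one does not get to start from a global Dirichlet problem with prescribed smooth boundary data on all of $\Gamma$; one only knows that, locally near $p$, the set $L(M)$ is pinched inside the smooth curve $\Gamma$. So one must use a purely local version of the Hardt--Lin regularity theorem, applied to $M$ restricted to a small neighborhood of $p$. Once that local step is granted, the topological dichotomy and the global smoothness of $M\cup\Gamma$ are essentially formal.
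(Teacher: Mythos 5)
Your globalization step---the clopen argument along the connected curve $\Gamma$, using that the manifold-boundary of the locally extended surface is a relatively closed $1$-submanifold of $\Gamma$---is sound, and it is essentially the same device as the paper's observation that the number of boundary sheets is locally constant along $\Gamma$, hence constant. The gap is in your ``local assertion'' itself, which you attribute to standard Hardt--Lin boundary regularity. What Hardt--Lin actually prove near a point of $\Gamma$ is that $\overline{M}\cap U$ is a union of some finite number $\kappa$ of $C^{1}$ manifolds-with-boundary (boundary $\Gamma\cap U$ or empty), disjoint except along the boundary; nothing in that statement, proved for integral currents, excludes $\kappa\ge 2$, i.e.\ several sheets of $M$ limiting onto the same arc of $\Gamma$. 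In that case $M\cup\Gamma$ would fail to be an embedded manifold-with-boundary even though one would still have $L(M)\cap U=\Gamma\cap U$. The step the paper's proof actually turns on is that the surfaces here are minimizing mod $2$ (see Definition~\ref{def:area}), and in the mod~$2$ setting Lemma~2.1 of Hardt--Lin forces $\kappa\in\{0,1\}$; this single-sheetedness is precisely what converts the local sheet structure into the manifold-with-boundary conclusion. Your proposal never addresses the multiplicity of sheets, so the smoothness half of the theorem is not yet justified; a smaller omission is that Hardt--Lin give only $C^{1}$ (indeed $C^{1,\alpha}$) regularity up to the ideal boundary, and the $C^\infty$ statement needs the further citation to Tonegawa.

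Note that the dichotomy half of the conclusion ($L(M)\cap W$ empty or equal to $\Gamma$) does survive your argument even if several sheets were present, since each sheet's boundary is open and relatively closed in $\Gamma$. So the missing ingredient is specifically the mod-$2$ bound $\kappa\le 1$; with that supplied, your outline coincides with the paper's proof.
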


\begin{proof}
Hardt-Lin \cite{hl} prove that in a neighborhood $U$ of each point of $\Gamma$,
$\overline{M}\cap U$ is a union of some finite number $\kappa$ of $C^1$ manifolds-with-boundary, the boundary
being either $\Gamma\cap U$ or the empty set, 
and that those manifolds are disjoint except at the boundary.
Their result is stated for integral currents, but their proof also works for chains mod $2$ and
in that case actually gives more:  $\kappa$ must then be $0$ or $1$ (because in 
Lemma~2.1 of their paper, if $\delta$ is sufficiently small, then $\kappa$ must be $0$ or $1$.)
A priori the number $\kappa$ might depend on the point, but since it is locally constant
and since $\Gamma$ is connected, it must in fact be constant on $\Gamma$.
In case $\kappa=1$, Tonegawa \cite{tonegawa} improves the boundary regularity by showing
that $M\cup \Gamma$ is $C^\infty$.
\end{proof}

\begin{theorem}[Compactness Theorem]\label{compactness-theorem}
Let $M_i$ be a sequence of open, area-minimizing surfaces that are properly 
embedded in an open subset $U$ of $\h^3$.  Then (after passing to a subsequence)
the $M_i$ converge smoothly with multiplicity $1$
on compact subsets of $U$ to such a surface $M$.

Now suppose that $U= W\cap \h^3$, where $W$ is an open subset of $\overline{\h^3}$.
Suppose also that  $L(M_i)\cap W$ is a smooth embedded curve $\Gamma_i$ in 
  $W\cap \partial\h^3$,
and that  $\Gamma_i$ converges smoothly and with multiplicity $m$ to an embedded curve
$\Gamma$ in $W\cap\partial\h^3$. 

Then $M_i\cup \Gamma_i$ (which is a smooth manifold-with-boundary by 
Theorem~\ref{boundary-regularity-theorem})
converges (in the Hausdorff topology on the space of relatively closed
subsets of $W$) to $M\cup \Gamma$.   
Furthermore:
\begin{enumerate}[\rm (1)]
\item\label{m-odd-item} If $m$ is odd,  then $M \cup \Gamma$ is a smooth manifold-with-boundary.
\item\label{m-even-item} If $m$ is even, then $\overline{M}$ is disjoint from $\Gamma$.
\item\label{m-one-item}  If $m=1$, then  $M_i\cup\Gamma_i$ converges smoothly on compact
subsets of $W$ to $M\cup \Gamma$.  
\end{enumerate}
\end{theorem}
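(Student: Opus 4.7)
The plan is to carry out the proof in two stages, first handling the interior compactness and then working at the boundary, with the parity of the multiplicity $m$ playing a central role via mod $2$ boundary counting.

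For the interior compactness statement, I would invoke the standard machinery of geometric measure theory for area-minimizing flat chains mod $2$. A uniform local area bound on the $M_i$ in $U$ follows from comparison with geodesic cones over the link $\partial B(p,r)\cap M_i$ in a small hyperbolic ball (equivalently, from monotonicity of the weighted area ratio in $\h^3$). Federer--Fleming compactness then yields a subsequential limit mod $2$ chain $M$ in $U$ that is itself area-minimizing. Since we are in codimension one and mod $2$, the interior regularity theorem produces a smooth embedded minimal surface, and because mod $2$ densities can only be $0$ or $1$, no multi-sheeted limit is possible. Allard-type estimates then upgrade flat convergence to smooth convergence with multiplicity $1$ on compact subsets of $U$ where the limit is nonempty.

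Next, for the boundary behavior, I would observe that since each $M_i\cup\Gamma_i$ is a smooth manifold-with-boundary (Theorem~\ref{boundary-regularity-theorem}), every accumulation point of the $M_i$ in $W$ lies either in $M$ or in $\Gamma$, whence $L(M)\cap W\subset \Gamma$. Applying Theorem~\ref{boundary-regularity-theorem} to the limit $M$ at any point $p\in\Gamma$ gives the dichotomy: either a neighborhood of $p$ in $\Gamma$ is disjoint from $\overline{M}$, or $M\cup\Gamma$ is a smooth manifold-with-boundary near $p$. The Hausdorff convergence of $M_i\cup\Gamma_i$ to $M\cup\Gamma$ then follows from these containments together with the smooth convergence $\Gamma_i\to\Gamma$. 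To separate cases~(\ref{m-odd-item}) and~(\ref{m-even-item}) I would use mod $2$ boundary counting: as flat chains mod $2$ in $W$, the boundary of the limit equals $m\cdot[\Gamma]\bmod 2$. If $m$ is odd this is $[\Gamma]$, forcing the smooth-manifold-with-boundary alternative at every point of $\Gamma$ and yielding~(\ref{m-odd-item}). If $m$ is even the mod $2$ boundary vanishes, so the smooth-manifold-with-boundary alternative cannot occur, and thus $\overline{M}\cap\Gamma=\emptyset$, giving~(\ref{m-even-item}).

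For case~(\ref{m-one-item}), smooth convergence on compact subsets of $W$, the task is to promote Hausdorff (and interior smooth) convergence to $C^\infty$ convergence at the boundary. The idea is that, since $\Gamma_i\to\Gamma$ smoothly with multiplicity $1$, each $M_i\cup\Gamma_i$ can locally be written, in boundary normal coordinates along $\Gamma$, as the graph of a function $u_i$ over a fixed half-disk in $M\cup\Gamma$, with prescribed boundary data converging in $C^\infty$ to that of $M$. The quasilinear elliptic minimal surface equation with Dirichlet boundary data, combined with the Hardt--Lin/Tonegawa boundary regularity applied uniformly along $\Gamma_i$, gives uniform $C^{k,\alpha}$ bounds on the $u_i$ up to the boundary for every $k$; Arzelà--Ascoli then yields $C^\infty$ convergence.

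The main obstacle is precisely this last step: the Hardt--Lin/Tonegawa boundary regularity is a statement about a single area-minimizer, and to extract uniform up-to-the-boundary Schauder estimates one must verify that their constants depend only on the $C^\infty$ norm of the boundary curve and not on the surface itself. The hypothesis of smooth multiplicity-one convergence $\Gamma_i\to\Gamma$ is what supplies this uniformity, and tracking this through the proofs of those boundary regularity results is the technical heart of the argument.
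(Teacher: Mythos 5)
Your outline follows the paper's structure at the top level (standard interior compactness, the Boundary Regularity Theorem for the limit, a mod $2$ parity argument for items (1)--(2), and Hardt--Lin/Tonegawa for item (3)), but it diverges at the parity step, and there are two places where your justification does not yet close. First, the Hausdorff convergence: your stated reason --- that each $M_i\cup\Gamma_i$ is individually a smooth manifold-with-boundary --- says nothing about the \emph{sequence}; a priori points $p_i\in M_i$ could accumulate at a point of $(W\cap\partial\h^3)\setminus\Gamma$. The paper rules this out with the convex hull property (totally geodesic barriers near such a point, which apply because the $\Gamma_i$ eventually stay away from it); some such barrier argument is needed and should be stated. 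Second, the parity step: the paper's argument is purely topological --- it takes an arc $C$ joining points on opposite sides of $\Gamma$ in $W\cap\partial\h^3$, counts mod $2$ intersections of $C$ with $M$ and with $M_i$, and homotopes $C$ into the ideal boundary where it crosses $\overline{M_i}$ in exactly $m$ points, so that the intersection parity equals $m \bmod 2$ and the Boundary Regularity Theorem then gives (1) and (2). Your alternative --- continuity of the mod $2$ boundary operator under flat convergence in the compactified picture, so that $\partial(\lim M_i)=m[\Gamma]\bmod 2$ --- can be made to work, but it needs two additional verifications that the paper's route avoids: uniform local \emph{Euclidean} mass bounds for the $M_i$ up to $\partial\h^3$ (the hyperbolic half-area bound used for interior compactness gives no control there), and an identification of the flat limit with the multiplicity-one chain $[M]$, in particular ruling out limit mass deposited on the sphere at infinity. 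The paper's own example after the theorem ($m=2$, thin annuli collapsing onto the equator, so $M=\varnothing$) shows how delicate the chain-theoretic limit near infinity is; to control it you would have to invoke the Hardt--Lin boundary regularity uniformly along the $\Gamma_i$, which is exactly the kind of uniformity question you defer.

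Two smaller remarks. The claim that ``mod $2$ densities can only be $0$ or $1$, so no multi-sheeted limit is possible'' conflates the flat-chain limit with the varifold limit; multiplicity-one smooth convergence is a statement about the latter, and the paper deduces it from the comparison bound $\operatorname{area}(M_i\cap\Omega)\le\tfrac12\operatorname{area}(\partial\Omega)$, which is the cleaner way to phrase it. For item (3), you correctly identify that the issue is uniformity of the Hardt--Lin/Tonegawa boundary estimates in the boundary curve; the paper is no more detailed there than you are, so that point is not a gap relative to the paper, but the graph representation of $M_i$ over $M\cup\Gamma$ near $\Gamma$ that you want to set up already presupposes such uniform up-to-the-boundary control (plus the fact, coming from the mod $2$ version of Hardt--Lin with $\kappa\le 1$, that there is only one sheet), so the logical order should be: uniform boundary regularity first, graphs and Schauder/Arzel\`a--Ascoli second.
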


\begin{proof}
The statement about smooth convergence on compact
subsets of $U$ is very standard.  (The areas of the $M_i$ are uniformly locally bounded,
since if $\Omega$ is a bounded, open region in $\h^3$, then
\newcommand{\area}{\operatorname{area}}
\begin{equation}\label{half-bound}
  \area(M_i\cap \Omega) \le \frac12 \area(\partial\Omega)
\end{equation}
 Also, the curvatures of the $M_i$ are uniformly bounded on compact
subsets by standard curvature estimates for area-minimizing hypersurfaces 
of dimension $<8$.  That the multiplicity of $M$ is $1$ follows from~\eqref{half-bound}.)

The convergence  of $M_i\cup \Gamma_i$ to $M\cup \Gamma$  (as relatively closed subset of $W$)
 follows from 
the convergence of $M_i$ to $M$ in $U$, the convergence of $\Gamma_i$ to $\Gamma$ in $W$,
and the convex hull property.  (The convex hull property ensures that points $p_i\in M_i$
cannot converge subsequentially to a point in $W_\infty\setminus \Gamma$, where 
$W_\infty:=W\cap\partial \h^3$.)

Since the remaining assertions are local, we can assume (in the upper half space model)
that $W$ is $\overline{\h^3}\cap\B$ for some open Euclidean ball centered at a point in $\partial\h^3$.
We can also assume that $\Gamma$ is connected, so that it divides $W_\infty:=W\cap\partial \h^3$
into two components.  Let $p$ and $q$ be a pair of points lying in different components of
$W_\infty\setminus \Gamma$, and let $C$ be a smooth curve joining $p$ to $q$
such that $C\setminus \{p,q\}$ lies in $U$.
By perturbing $C$ slightly, we may assume it intersects $M$ transversely.
Let $\nu$ be the mod $2$ number of points of $M\cap C$; that number is independent
of $C$ (for $C$ transverse to $M$).  
 By the smooth convergence, $M_i$ intersects $C$ transversely for large $i$
and the mod $2$ number $\nu_i$ of intersection points is independent of $C$.
The smooth convergence $M_i\to M$ also implies that $\nu_i=\nu$ for all sufficiently large $i$.
By elementary topology that $\nu_i\cong m\, (\text{mod $2$})$ for $i$ sufficiently large.
(If this not clear, note that $C$ can be homotoped in $W$ to a curve in $W_\infty$ that intersects
$\overline{M_i}$ transversely in exactly $m$ points.)

Thus $m$ even implies that $\nu=0$ and $m$ odd implies that $\nu=1$.
Assertions~\eqref{m-odd-item} and~\eqref{m-even-item} now follow immediately from the 
Boundary Regularity Theorem~\ref{boundary-regularity-theorem}.
Assertion~\eqref{m-one-item} follows from the boundary regularity estimates of Hardt-Lin
and Tonegawa.  
\end{proof}

\newcommand{\eps}{\epsilon}
We remark that if $m>1$, then the convergence of $M_i\cup \Gamma_i$ to $M\cup\Gamma$
fails to be smooth along $\Gamma$.  For example, suppose in the ball model $\B=\h^3$
that $\Gamma_i$ is the union of the two circles $\partial \B \cap \{z=\pm \eps_i\}$
where $\eps_i\to 0$. 
Then $\Gamma_i$ converges smoothly with multiplicity $m=2$
to the equator $\Gamma:=\partial \B \cap \{z=0\}$.
 Let $M_i$ be an area-minimizing surface with boundary $\Gamma_i$.
(Such a surface exists by a theorem of Anderson -- see Theorem~\ref{th:principal} below.)
For small $\eps_i$, one can prove that $M_i$ is a minimal annulus that lies
within Euclidean distance $O(\eps_i)$ from $\Gamma$.
Thus $M_i\cup \Gamma_i$ converges to $M\cup\Gamma$, where $M$ is the empty set.
Note that the convergence of $M_i\cup\Gamma_i$ to $\Gamma$ is not smooth.

\begin{definition}
We say that a closed set $K\subset \partial\h^3$ has {\em piecewise smooth
boundary} provided 
\begin{enumerate}
\item $K$ is the closure of its interior, and
\item there is a finite set $S$ of points such that $(\partial K)\setminus S$ is the
disjoint union of a finite set of smooth curves.
\end{enumerate}
\end{definition}

\begin{theorem}[Basic Existence Theorem] \label{th:principal}
Let $K\subset  \partial\h^3$ be a closed region with piecewise smooth boundary.
Then there is an area-minimizing surface $M$ in $\h^3$ such that $\partial M=\partial K$.
Furthermore, if $M$ is any area-minimizing surface in $\h^3$ with $\partial M=\partial K$, then
\begin{enumerate}
\item $\overline{M}$ is a smooth embedded manifold with boundary except
at the finite set of points where $\partial K$ is not a smooth embedded curve.
\item there is a unique open subset $E(M,K)$ of $\h^3$ whose boundary in $\overline{\h^3}$
is $K\cup M$.
\end{enumerate}
\end{theorem}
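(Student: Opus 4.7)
The plan is to prove existence by approximation and then derive the structural assertions from the boundary regularity theorem and a topological argument. For existence, I would round the finitely many singular points of $\partial K$ in $\partial\h^3$ to produce a sequence of smooth Jordan curves $\Gamma_n\subset\partial\h^3$ that converge to $\partial K$ smoothly and with multiplicity one away from the singular set, and in the Hausdorff sense globally. Each $\Gamma_n$ bounds an area-minimizing surface $M_n\subset\h^3$ by Anderson's solution of the asymptotic Plateau problem. Codimension-one curvature estimates and the standard locally uniform area bound apply, so the compactness theorem (Theorem~\ref{compactness-theorem}) extracts a subsequential smooth limit $M$ on compact subsets of $\h^3$ which is itself area-minimizing. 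The convex hull property forces $L(M)\subset\partial K$. The delicate point, and the main obstacle, is to verify that $L(M)$ is not a proper subset of $\partial K$: the transverse intersection-number argument from the proof of Theorem~\ref{compactness-theorem}, applied in small neighborhoods $W\subset\overline{\h^3}$ of smooth arcs of $\partial K$, shows that any curve joining the two sides of $\partial K$ through $W$ must cross $M_n$ an odd number of times, hence must cross $M$; consequently $L(M)\cap W=\partial K\cap W$ for every such $W$, so $\partial M=\partial K$.

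For the ``furthermore'' clauses, let $M$ be an arbitrary area-minimizing surface with $\partial M=\partial K$. Interior smoothness is the classical Federer regularity theorem for $2$-dimensional area-minimizing mod~$2$ chains in a $3$-manifold. Boundary smoothness at each smooth point of $\partial K$ is an immediate application of Theorem~\ref{boundary-regularity-theorem}, with $W$ a small neighborhood in $\overline{\h^3}$ of a smooth arc of $\partial K$ disjoint from the singular set: since $M$ must be nonempty near such an arc (by the intersection-number argument), the alternative $L(M)\cap W=\emptyset$ is excluded and $M\cup(\partial K\cap W)$ is smooth.

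For the open region $E(M,K)$, I would consider $\Sigma:=\overline{M}\cup K\subset\overline{\h^3}$. Away from the finite singular set of $\partial K$, $\Sigma$ is a smooth embedded closed $2$-manifold, hence a mod~$2$ $2$-cycle in the closed $3$-ball $\overline{\h^3}$. Since $H_2(\overline{\h^3};\z/2)=0$, this cycle bounds a unique mod~$2$ $3$-chain, whose interior is $E(M,K)$. Uniqueness reduces to the statement that $\overline{\h^3}\setminus\Sigma$ has exactly two components, which follows from Alexander duality together with the local two-sidedness of $\Sigma$ away from its singular set.
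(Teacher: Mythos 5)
Your proposal is correct in substance, but the existence part follows a genuinely different route from the paper. The paper invokes Anderson's theorem directly for the piecewise smooth curve $\partial K$: his proof, run for flat chains mod $2$ instead of integral currents, produces an area-minimizing surface $M$ with $\partial M=\partial K$ as chains, hence $L(M)=\partial K$ as sets, with no approximation or compactness argument at all. You instead smooth the corners, solve the asymptotic Plateau problem for the smooth curves $\Gamma_n$, and pass to a limit, using the parity/intersection-number device from the proof of Theorem~\ref{compactness-theorem} (together with the convex hull property, Proposition~\ref{convex-hull-proposition}, to keep the intersection points from escaping to $\partial\h^3$ and to get $L(M)\subset\partial K$) in order to rule out boundary loss, i.e.\ to show $L(M)$ is all of $\partial K$ and in particular that $M\neq\varnothing$. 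That is a legitimate and somewhat more self-contained alternative: it uses Anderson only for smooth data plus tools already stated in the paper, at the cost of re-deriving what the mod $2$ version of Anderson's statement gives for free; note that you still need the mod $2$ formulation for the approximating surfaces $M_n$, since the parity argument requires $\partial M_n=\Gamma_n$ as chains mod $2$. The two ``furthermore'' clauses are treated essentially as in the paper: boundary smoothness at regular points of $\partial K$ is an immediate application of Theorem~\ref{boundary-regularity-theorem} (your nonemptiness remark is not needed there, since $L(M)=\partial K$ is part of the hypothesis on $M$), and for $E(M,K)$ the paper says only ``elementary topology'', which your mod $2$ chain argument makes precise.

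One correction to that last step: uniqueness of $E(M,K)$ does \emph{not} reduce to $\overline{\h^3}\setminus(M\cup K)$ having exactly two components. If $\partial K$ is disconnected (e.g.\ $K$ a pair of disjoint closed disks, $M$ the two corresponding minimal disks), the complement has three components, yet the enclosed region is still unique. The correct mechanism is the chain-level one you also state, properly apportioned: existence of a bounding mod $2$ $3$-chain comes from $H_2(\overline{\h^3};\z/2)=0$, while uniqueness comes from the fact that the closed ball carries no nonzero mod $2$ $3$-cycle (the difference of two chains with the same boundary would be such a cycle), not from $H_2=0$ or from a two-component count. One should also note that any open set whose topological boundary in $\overline{\h^3}$ is $K\cup M$ is necessarily a union of connected components of $\h^3\setminus M$, so it is indeed captured by that chain-level statement. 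With these adjustments your argument for clause (2) is fine and matches the paper's intent.
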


 \begin{proof} 
 Anderson \cite[Theorem~3]{anderson1}
 proves existence of a smooth, area-minimizing surface $M\subset H$ with the property
that $\partial M=\partial K$
as flat chains mod $2$ with respect to the Euclidean metric on the ball. (He states the theorem
for integral currents, but exactly the same proof works for chains mod $2$.)  In particular, this
implies that $\overline{M}\setminus M = \partial K$ as sets 
(i.e., in the notation of~\ref{def:limit}, 
that $L(M)=\partial K$.)

The smoothness of $\overline{M}$ at the regular points of $\partial K$ follows immediately from 
the Boundary Regularity Theorem~\ref{boundary-regularity-theorem}.

If we identify $\h^3$ conformally with a ball $B$ in $\RR^3$,
then  $M\cup K$ 
becomes (except possibly at finitely many points) a compact, embedded, piecewise-smooth  
closed manifold of  $\RR^3$ contained in $\overline{B}$.
By elementary topology, there is a unique open subset $E(M,K)$ of $B$ 
whose boundary is $M\cup K$.
\end{proof}

\begin{lemma} \label{lem:U}
Let $M$ be an area-minimizing surface.  Let $M'$ be a compact region in the interior of $M$
such that $M'$ has piecewise smooth boundary.
Then $M'$ is the unique area-minimizing surface with its boundary.
\end{lemma}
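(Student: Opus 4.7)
The plan is to assume $N$ is any area-minimizing surface with $\partial N=\partial M'$ and to show $N=M'$. First, since $M'$ is a compact piece of the area-minimizing surface $M$, it is itself area-minimizing in the sense of Definition~\ref{def:area}; combined with the assumed minimality of $N$ this forces $\area(N)=\area(M')$.

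Next I would perform a replacement argument. Choose a compact subregion $K\subset M$ with piecewise smooth boundary such that $M'\subset\Int K$. Since $M$ is area-minimizing, so is $K$. Consider the competitor
\[
K^*:=(K\setminus\Int M')\cup N,
\]
which has $\partial K^*=\partial K$ (the two copies of $\partial M'$ cancel mod $2$). The area equality from the previous step gives $\area(K^*)=\area(K)$, so $K^*$ is itself area-minimizing.

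The crucial step is to invoke interior regularity: any $2$-dimensional (mod $2$) area-minimizing surface in a smooth $3$-manifold is smooth at each interior point. Applied to $K^*$, this says $K^*$ is smooth along the curve $\partial M'\subset\Int K^*$. But $K^*$ is the union of the two minimal pieces $N$ and $K\setminus\Int M'$ glued along $\partial M'$, so their tangent planes must agree along $\partial M'$. By Cauchy uniqueness for the (elliptic) minimal surface equation---equivalently, by real-analyticity of minimal surfaces---$N$ and $M$ coincide in a neighborhood of $\partial M'$. No component of $N$ can be closed, since $\h^3$ admits no closed minimal surfaces (Proposition~\ref{convex-hull-proposition}), so each component of $N$ meets $\partial M'$; propagating the agreement along each component then yields $N\subset M$.

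To finish, I would run a mod $2$ cycle argument inside $M$: the symmetric difference $N\triangle M'$ is a compact $2$-chain in the $2$-manifold $M$ with zero mod $2$ boundary, hence a disjoint union of compact components of $M$. Since $M$ has no compact components (again by the nonexistence of closed minimal surfaces in $\h^3$), this forces $N=M'$. The main obstacle is the interior regularity step: extracting the tangential match of $N$ and $M$ along $\partial M'$ from smoothness of $K^*$ at interior points. Once that is in hand, Cauchy uniqueness and the cycle argument are essentially formal.
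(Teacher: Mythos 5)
Your proposal is exactly the standard replacement argument that the paper's one-word proof (``Standard'') refers to: swap $N$ into a larger compact piece $K$ of $M$, note the result $K^*$ is still area-minimizing, use interior regularity along $\partial M'$ together with unique continuation of minimal surfaces to get $N\subset M$, and finish with the mod $2$ cycle argument using that $\h^3$ contains no closed minimal surfaces. The one step to tighten is the one you flagged: rather than asserting that the tangent planes of $N$ and $M$ agree along $\partial M'$ (which presupposes boundary regularity of $N$ and runs into trouble at corners of the piecewise smooth curve), work with the support of the mod $2$ chain $K^*$ --- near a point $p\in\partial M'$ it is either a single smooth embedded minimal surface containing an open piece of $M$, hence coincides with $M$ near $p$ by unique continuation, forcing the density-one points of $N$ near $p$ into $M$, or else cancellation has occurred because $N$ already overlaps $M$ on an open set near $p$, and unique continuation applies directly in that case as well.
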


\begin{proof}
Standard. 
\end{proof}

\begin{theorem}
Let $K_1$ and $K_2$ be disjoint, closed regions in $\partial\h^3$ with piecewise smooth boundaries.
Let $M_1$ and $M_2$ be least area surfaces with boundaries $\partial K_1$ and $\partial K_2$,
and let $U_i$ be the region enclosed by $M_i\cup K_i$.  Then $\overline{U_1}$ and $\overline{U_2}$
are disjoint.
\end{theorem}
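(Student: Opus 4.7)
The plan is to show $\overline{U_1}\cap\overline{U_2}=\emptyset$ by a cut-and-paste comparison combined with unique continuation for minimal surfaces. Because $K_1,K_2$ are disjoint closed subsets of $\partial\h^3$, the intersection $\overline{U_1}\cap\overline{U_2}\cap\partial\h^3$ lies in $K_1\cap K_2=\emptyset$, so only overlap inside $\h^3$ needs to be ruled out. The same disjointness (together with $K_i\cap\partial K_j=\emptyset$ for $i\neq j$) forces $\overline{U_1\cap U_2}$, $\overline{M_1\cap U_2}$, and $\overline{M_2\cap U_1}$ to be compact subsets of $\h^3$, by the argument that their closures in $\overline{\h^3}$ cannot touch $\partial\h^3$.

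Suppose for contradiction that $U_1\cap U_2\neq\emptyset$, and set $A:=M_1\cap U_2$, $B:=M_2\cap U_1$. Choose a geodesic ball $\Omega\subset\h^3$ containing $\overline{A}\cup\overline{B}$ such that $\partial\Omega$ meets $M_1\cup M_2$ transversely. By Lemma~\ref{lem:U}, the compact pieces $\tilde M_i:=M_i\cap\overline{\Omega}$ are uniquely area-minimizing for their boundaries $M_i\cap\partial\Omega$. Treating surfaces as mod-$2$ chains, the identity $\partial A=\partial B=M_1\cap M_2$ (valid because $M_i\cap K_j=\emptyset$ for $i\neq j$) shows that the swapped surfaces $\tilde M_1':=(\tilde M_1\setminus A)\cup B$ and $\tilde M_2':=(\tilde M_2\setminus B)\cup A$ are competitors for $\tilde M_1$ and $\tilde M_2$. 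Minimality yields $|B|\geq|A|$ and $|A|\geq|B|$, hence $|A|=|B|$ with equality in both minimizing inequalities. Uniqueness in Lemma~\ref{lem:U} then forces $\tilde M_1'=\tilde M_1$, so $A$ and $B$ coincide up to a set of measure zero, exhibiting a common open subset of $M_1$ and $M_2$.

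Unique continuation for the minimal surface equation then implies that a connected component $C$ of $M_1$ coincides with a connected component of $M_2$. Since $L(C)\subset L(M_1)\cap L(M_2)=\partial K_1\cap\partial K_2=\emptyset$, the convex hull property (Proposition~\ref{convex-hull-proposition}) forces $C=\emptyset$, a contradiction. In the degenerate case $A=B=\emptyset$, the $2$-dimensional boundary of $U_1\cap U_2$ is already contained in $M_1\cap M_2$, producing the shared open set directly; and if the closures meet only at some $p\in M_1\cap M_2$ with $U_1\cap U_2=\emptyset$, then the one-sided configuration of $U_1,U_2$ near $p$ forces tangency of $M_1,M_2$ there, and the strong maximum principle for minimal graphs returns us to the shared-component scenario. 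The principal technical obstacle is making the cut-and-paste rigorous when $M_1\cap M_2$ is non-transverse; the mod-$2$ chain framework circumvents transversality issues, but upgrading area equality to a bona fide shared open submanifold requires the interior smoothness of $M_1,M_2$.
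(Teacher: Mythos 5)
Your proof is correct and follows essentially the same route as the paper: after noting that the overlap is compactly contained in $\h^3$ and handling mere touching of the closures by the strong maximum principle, the heart of both arguments is that $M_1\cap U_2$ and $M_2\cap U_1$ are compactly contained minimizers with the same (mod $2$) boundary, so the uniqueness in Lemma~\ref{lem:U} is violated. Your exchange computation giving $|A|=|B|$, and the endgame via unique continuation and the empty limit set, are simply a more explicit rendering of the contradiction that the paper states tersely.
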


\begin{proof}
Let $Z=\overline{U_1}\cap \overline{U_2}$.  Note that $Z$ is a compact subset of $\h^3$.
Suppose it is nonempty.   Then $U_1\cap U_2$ is nonempty by the maximum principle (applied
to $M_1$ and $M_2$.)  By Lemma~\ref{lem:U},  $U_1\cap M_2$ is the unique least area surface with its boundary.
Likewise, $U_2\cap M_1$ is the least area surface with its boundary.  But $U_1\cap M_2$ and $U_2\cap M_1$
have the same boundary, a contradiction.
\end{proof}

\begin{corollary}
Suppose for $i=1,2$ that $K_i$ is a closed region in $ \partial\h^3$ and that $M_i$ is a least area surface
in $\h^3$ with $\partial M_i=\partial K_i$.  Let $U_i$ be the region enclosed by $M_i\cup K_i$.
If $K_1$ is contained in the interior of $K_2$, then $U_1\cup M_1$ is contained in $U_2$.
\end{corollary}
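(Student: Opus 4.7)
The plan is to apply the preceding theorem after passing to the complementary region on $\partial\h^3$. Define $K_2^* := \overline{\partial\h^3\setminus K_2}$, which is again a closed region with piecewise smooth boundary (its boundary curves are exactly those of $K_2$, so $\partial K_2^* = \partial K_2$). The hypothesis $K_1\subset\Int(K_2)$ is equivalent to the statement that $K_1$ and $K_2^*$ are disjoint. Thus the pair $(K_1,K_2^*)$ satisfies the hypotheses of the previous theorem, and I only need to identify an appropriate minimizer and enclosed region on the $K_2^*$ side.

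I claim that $M_2$ itself serves as a least area surface with boundary $\partial K_2^*$, and that the region it encloses (in the sense of Theorem~\ref{th:principal}) is $W_2 := \h^3\setminus(U_2\cup M_2)$. The first statement is immediate: $\partial M_2 = \partial K_2 = \partial K_2^*$, and the area-minimizing property of $M_2$ depends only on its boundary, not on which of the two ideal regions spanning that boundary one privileges. The second statement follows from the uniqueness assertion in Theorem~\ref{th:principal}: $E(M_2,K_2^*)$ is characterized as the unique open subset of $\h^3$ whose boundary in $\overline{\h^3}$ is $M_2\cup K_2^*$, and by elementary topology (the complement of the embedded closed surface $M_2\cup\partial K_2$ in a conformal $3$-ball has exactly two components, one with boundary sphere $M_2\cup K_2$ and the other with boundary sphere $M_2\cup K_2^*$), this component must be $W_2$.

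Now feed $(K_1,K_2^*)$, with minimizers $M_1,M_2$ and enclosed regions $U_1,W_2$, into the preceding theorem. It yields $\overline{U_1}\cap\overline{W_2}=\emptyset$ (closures taken in $\overline{\h^3}$, hence equivalently in $\h^3$ since the ideal pieces $K_1$ and $K_2^*$ are already disjoint). Restricting to $\h^3$ gives
\[
  (U_1\cup M_1)\cap(W_2\cup M_2)=\emptyset.
\]
Since $\h^3$ is the disjoint union $U_2\sqcup M_2\sqcup W_2$, this forces $U_1\cup M_1\subset U_2$, which is the desired conclusion. The only nontrivial point in the argument is the duality identification $E(M_2,K_2^*)=W_2$; once that is in hand, the corollary reduces cleanly to the theorem just proved.
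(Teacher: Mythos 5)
Your argument is correct in substance, but it takes a genuinely different route from the paper's: the authors pointedly remark that this statement ``is not really a corollary'' of the preceding disjointness theorem and prove it by repeating that theorem's exchange argument (maximum principle plus Lemma~\ref{lem:U} applied to $U_1\cap M_2$ and $U_2\cap M_1$), whereas you make it an honest corollary by passing to the complementary region $K_2^*=\overline{\partial\h^3\setminus K_2}$, observing that $M_2$ also spans $\partial K_2^*$, and applying the disjointness theorem to the disjoint pair $(K_1,K_2^*)$. This reduction is sound and there is no circularity (the theorem is proved independently of the corollary); what it costs you is the duality identification $E(M_2,K_2^*)=\h^3\setminus(U_2\cup M_2)$, and that is exactly where your write-up is too quick. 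The parenthetical justification --- that the complement of $M_2\cup K_2$ in a conformal $3$-ball has ``exactly two components'' --- is false in general: $M_2\cup K_2$ need not be connected (for instance $K_2$ a union of several disks spanned by disjoint geodesic planes), in which case the complement has more than two components and both $U_2$ and its complementary region are unions of several of them. What you actually need is that the open set $W_2:=\h^3\setminus(U_2\cup M_2)$ has topological boundary in $\overline{\h^3}$ equal to $M_2\cup K_2^*$, so that the uniqueness clause of Theorem~\ref{th:principal} forces $W_2=E(M_2,K_2^*)$. The ideal part of that boundary computation is easy, but along $M_2$ one must rule out the possibility that $U_2$ abuts some component of $M_2$ from both sides (that component would then be absent from $\partial W_2$). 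This one-sidedness is not a formal consequence of ``unique open set with boundary $K_2\cup M_2$''; it does follow from the mod~$2$ structure behind Theorem~\ref{th:principal} (the enclosed region has mod~$2$ boundary $M_2$ plus $K_2$ with multiplicity one along $M_2$), or from a short argument near the ideal curves $\partial M_2\subset\partial K_2$ using the boundary regularity theorem. With that point supplied, your final step $(U_1\cup M_1)\cap(W_2\cup M_2)=\varnothing$ together with $\h^3=U_2\cup M_2\cup W_2$ correctly yields $U_1\cup M_1\subset U_2$; the paper's exchange argument avoids this duality bookkeeping at the cost of rerunning the proof of the theorem.
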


(This corollary is not really a corollary -- but it is proved in exactly the same way as the theorem. Actually, we use the 
corollary but not the theorem.)

\begin{theorem} \label{th:E}
Let $K$ be a closed region in $ \partial\h^3$ with piecewise smooth boundary.
Let $\FF$ be the collection of all least area surfaces in $\h^3$ with boundary $\partial K$.
Then $\FF$ contains surfaces $M_{\rm in}$ and $M_{\rm out}$ with the following property.
If $M\in \FF$, then
\[
   E(M_{\rm in},K) \subset E(M,K) \subset E(M_{\rm out},K).
\]
\end{theorem}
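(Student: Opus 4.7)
The plan is to construct $M_{\mathrm{out}}$ by exhausting the open set $U^* := \bigcup_{M\in\FF} E(M,K)\subseteq \h^3$; the surface $M_{\mathrm{in}}$ is produced symmetrically using $U_* := \bigcap_{M\in\FF} E(M,K)$. Since $\h^3$ is second countable, the open cover $\{E(M,K)\}_{M\in\FF}$ of $U^*$ admits a countable subcover $\{E(M_n,K)\}_{n\ge 1}$. The core ingredient is a cut-and-paste lemma: for any $M_1, M_2\in \FF$ with $U_i := E(M_i,K)$, there exist $M_+, M_-\in \FF$ with $E(M_+,K) = U_1\cup U_2$ and $E(M_-,K) = U_1\cap U_2$. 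Iterating this lemma one builds $M_n'\in \FF$ with enclosed regions $U_n' := E(M_n',K)$ increasing in $n$ and satisfying $\bigcup_n U_n' = U^*$.

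For the cut-and-paste lemma, view the $U_i$ as sets of finite perimeter in $\h^3$; each has perimeter $\alpha := \operatorname{area}(M_i)$, the common least area in $\FF$. Submodularity of perimeter yields
\[
  \operatorname{area}(\Sigma_+) + \operatorname{area}(\Sigma_-) \le 2\alpha,
\]
where $\Sigma_\pm$ denote the reduced boundaries in $\h^3$ of $U_1\cup U_2$ and $U_1\cap U_2$. By Theorem~\ref{th:principal} applied to $U_1$ and $U_2$, each of these sets has $K$ as ideal boundary, so $U_1\cup U_2$ and $U_1\cap U_2$ do as well. Hence $\Sigma_\pm$ are competitors for the asymptotic Plateau problem with boundary $\partial K$, forcing $\operatorname{area}(\Sigma_\pm)\ge \alpha$ and equality throughout. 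Both $\Sigma_\pm$ are therefore area-minimizing, and interior regularity for area-minimizing $2$-surfaces in a $3$-manifold together with the Boundary Regularity Theorem~\ref{boundary-regularity-theorem} upgrade them to smooth surfaces $M_\pm\in \FF$ with the desired enclosed regions.

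To finish, apply the Compactness Theorem~\ref{compactness-theorem} (with $\Gamma_i\equiv \partial K$, so $m=1$) to the sequence $M_n'$ and extract a subsequence converging smoothly on compact subsets of $\h^3$ to some $M_{\mathrm{out}}$; preservation of area and of the asymptotic boundary under this convergence gives $M_{\mathrm{out}}\in \FF$, while the monotonicity of the $U_n'$ together with the smooth convergence yields $E(M_{\mathrm{out}},K) = U^*$. Every $M\in \FF$ then satisfies $E(M,K)\subseteq U^* = E(M_{\mathrm{out}},K)$, as required. The main obstacle is the cut-and-paste lemma: submodularity of perimeter is a geometric-measure-theoretic fact that sits slightly outside the smooth toolkit of the paper, and the subsequent upgrade of $\Sigma_\pm$ from a Caccioppoli boundary to a smooth manifold-with-boundary at the ideal points of $\partial K$ relies on a careful application of the Hardt--Lin and Tonegawa boundary regularity results underlying Theorem~\ref{boundary-regularity-theorem}. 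Once cut-and-paste is established, the countable exhaustion and the compactness argument are direct.
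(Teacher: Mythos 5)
Your overall strategy (union/intersection of enclosed regions via submodularity of perimeter, then a countable exhaustion and compactness) is a recognized alternative route, but as written the key cut-and-paste step has a genuine gap: every surface in $\FF$ is complete and asymptotic to $\partial\h^3$, so $\operatorname{area}(M_i)=\infty$. Definition~\ref{def:area} only asserts that each \emph{compact} portion minimizes; there is no finite ``common least area $\alpha$'' in $\FF$. Consequently the inequality $\operatorname{area}(\Sigma_+)+\operatorname{area}(\Sigma_-)\le 2\alpha$ and the conclusion ``$\operatorname{area}(\Sigma_\pm)\ge\alpha$ and equality throughout'' compare infinite quantities and force nothing. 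To salvage the argument you must localize: apply submodularity inside large balls $B_R$ and show that the cut terms $\mathcal{H}^2\bigl(\partial B_R\cap(U_1\triangle U_2)\bigr)$ can be made negligible along some sequence $R_i\to\infty$ so that the modified sets are admissible comparison surfaces for $M_1$ and $M_2$ in the sense of Definition~\ref{def:area}. In hyperbolic space this is not automatic (sphere areas grow exponentially); one essentially needs that two minimizers with the same ideal boundary enclose a region of finite volume, which rests on high-order contact at $\partial\h^3$ (Hardt--Lin/Tonegawa asymptotics) and is nowhere justified in your proposal. Even granting minimality of $\Sigma_\pm$, you still owe an argument (maximum principle together with Theorem~\ref{boundary-regularity-theorem}) that their enclosed regions are exactly $U_1\cup U_2$ and $U_1\cap U_2$, and that the limit in your final compactness step encloses all of $U^*$.

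For comparison, the paper's proof sidesteps all of this. It approximates $K$ from inside by nested closed regions $K_1\subset K_2\subset\cdots$ with smooth boundaries, $\bigcup_i K_i=\operatorname{Int}(K)$ and $\partial K_i\to\partial K$; takes any least-area $M_i$ with $\partial M_i=\partial K_i$; uses the nesting corollary (if $K_i$ lies in the interior of $K$, then $E(M_i,K_i)\subset E(M,K)$ for \emph{every} $M\in\FF$) together with Lemma~\ref{lem:U}; and defines $M_{\rm in}$ as a subsequential limit of the $M_i$ via the Compactness Theorem~\ref{compactness-theorem}, with $M_{\rm out}$ obtained symmetrically by approximating from outside. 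If you wish to keep your route you would have to carry out the localized submodularity argument and the attendant asymptotic estimates in detail; the paper's approximation argument is substantially shorter and stays within the tools it has already established.
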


Recall the $E(M,K)$ is the region enclosed by $M$ and $K$.
(We think of $M_{\rm in}$ and $M_{\rm out}$ as the innermost and outermost surfaces in the
family $\FF$.)

\begin{proof}
Let $K_1\subset K_2 \subset \dots$ be a sequence of closed subsets of the interior of $K$ such
that each $K_i$ has smooth boundary, such that $\cup K_i$ is the interior of $K$, such
that $\partial K_i\to \partial K$, and such that convergence $\partial K_i$ to $\partial K$ is smooth
except at the points where $\partial K$ is not smooth.

Let $M_i$ be a least area surface with boundary $\partial K_i$, and let $M_{\rm in}$ be a subsequential
limit of the $M_i$.  Then $M_{\rm in}\in \FF$.

Furthermore, if $M\in \FF$, then 
\[
    E(M_i,K_i)\subset E(M, K)
\]
for all $i$ (by the lemma), and thus $E(M_{\rm in},K)\subset E(M,K)$.   

The assertions about $M_{\rm out}$ are proved in a very analogous manner.
\end{proof}

\begin{remark} \label{re:unique}
Note that $M_{\rm in}$ is unique, as is $M_{\rm out}$.  Hence if $g$ is an isometry of $\h^3$ such
that $g(K)=K$, then $g(M_{\rm in})=M_{\rm in}$ and $g(M_{\rm out})=M_{\rm out}$.
\end{remark}

Of course $M_{\rm in}=M_{\rm out}$ if and only if there is only one least area surface with boundary $K$.

%%%%%%%%%
\section{Strict $L^\infty$ Stability}

In this section, we define strict $L^\infty$ stability 
 and we prove
some of its basic properties.
Let $\Omega$ be a Riemannian manifold that is connected but not compact.

\begin{definition}[strict $L^\infty$ stability] \label{def:linf}
Let $J$ be a self-adjoint 2nd-order linear elliptic operator on a surface $\Omega$.
Let us say $\Omega$ is {\bf strictly $L^\infty$ stable} (with respect to $J$) if the first eigenvalue
of any compact subdomain is strictly positive and if there are no nonzero bounded Jacobi fields (i.e.
solutions of $Ju=0$) on $\Omega$.
\end{definition}

Throughout this paper, we will use the concept of strict $L^\infty$ stability
only for minimal surfaces, and the operator $J$ will always be the Jacobi operator.
(However, the following three results hold for general manifolds $\Omega$ and operators
$J$.)

\begin{lemma}\label{standard}
Let $w$ be a positive solution of $J \, w=0$ on $\Omega$.
Then the first eigenvalue of $J$ on every compact subdomain of $\Omega$ is strictly positive.
\end{lemma}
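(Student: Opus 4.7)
The plan is to invoke the classical Barta-type principle: the existence of a positive ``ground state'' $w$ forces strict positivity of the first Dirichlet eigenvalue on every compact subdomain.

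First I would reduce to the case of compact subdomains with smooth boundary. Given an arbitrary compact $K\subset \Omega$, enclose $K$ inside a bounded open set $U$ with smooth boundary satisfying $\overline{U}\subset \Omega$. By the variational characterization of the first Dirichlet eigenvalue together with the inclusion of test-function spaces (extension by zero from the interior of $K$ into $U$), one obtains $\lambda_1(K)\ge \lambda_1(U)$, so it suffices to show $\lambda_1(U)>0$.

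Next, let $\phi$ be the first Dirichlet eigenfunction of $J$ on $U$, normalized so that $\phi>0$ in $U$ and $\phi=0$ on $\partial U$; thus $J\phi=-\lambda_1\phi$ in $U$ (with the sign convention in which ``first eigenvalue positive'' expresses strict stability). Since $w>0$ is continuous on the compact set $\overline{U}$ and $Jw=0$, self-adjointness of $J$ (Green's second identity) applied to the pair $(w,\phi)$ yields
\[
-\lambda_1 \int_U \phi\, w\, dA \;=\; \int_U \bigl(w\,J\phi - \phi\,Jw\bigr)\, dA \;=\; \int_{\partial U} w\,\partial_\nu \phi\; ds,
\]
where the boundary term simplifies using $\phi|_{\partial U}\equiv 0$ and $\partial_\nu$ denotes the outward co-normal derivative associated with $J$. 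Because $\phi$ is positive in $U$ and vanishes on the smooth boundary $\partial U$, the Hopf boundary point lemma gives $\partial_\nu \phi<0$ pointwise on $\partial U$. Since $w>0$ on $\partial U$, the right-hand side is strictly negative, while $\int_U \phi w\, dA>0$; comparing signs forces $\lambda_1>0$.

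The main obstacle is essentially bookkeeping rather than substance: one must be careful about the sign convention used to define the first eigenvalue of $J$, and identify the correct co-normal derivative appearing in Green's identity for the given self-adjoint $J$ (for the Jacobi operator of a minimal surface in $\h^3$ this is just $\partial_\nu$ in the surface metric, since the non-Laplacian part of $J$ is multiplicative). An alternative, equally clean route is the Picone-type substitution $\phi=wv$ in the Rayleigh quotient of $J$; a short integration by parts, using $Jw=0$ to cancel the potential terms, reduces the quotient to $\int_U w^2|\nabla v|^2/\int_U w^2 v^2$, which is bounded below by the first Dirichlet eigenvalue of the weighted Laplacian on $U$ --- and hence by a strictly positive constant.
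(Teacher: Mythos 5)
Your proof is correct, and it is exactly the standard argument the paper has in mind: the paper gives no proof at all, simply declaring the lemma standard and citing Theorem~1 of Fischer-Colbrie--Schoen, whose content is this Barta-type principle. Both your Green's identity/Hopf lemma argument and the Picone substitution $\phi = wv$ are valid instances of that standard proof (the only point worth noting is that the Hopf boundary point lemma applies despite the zero-order coefficient having arbitrary sign, because $\phi$ vanishes exactly on $\partial U$).
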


The proof is standard.  See, for example, Theorem~1 of
\cite{Fischer-Colbrie-Schoen}.

\begin{lemma}\label{ratio-lemma}
Let $u$ and $w$ be Jacobi fields on a connected minimal hypersurface $M$.
Suppose that $u/w$ has a positive local maximum $\lambda$ at a point $p$ where  $u$ and $w$ are both positive.  Then $u=\lambda w$.
\end{lemma}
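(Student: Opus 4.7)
The plan is to reduce the statement to the strong maximum principle for a linear elliptic operator without zeroth-order term, and then extend from a local identity to a global one by unique continuation.

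First, I would set $v = u - \lambda w$. Since $J$ is linear, $Jv = 0$. The hypothesis that $u/w$ attains a local maximum $\lambda$ at $p$, together with $w > 0$ in a neighborhood $U$ of $p$, gives $u \le \lambda w$ throughout $U$, i.e., $v \le 0$ on $U$, with equality at $p$. So $v$ attains an interior maximum at $p$.

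The natural worry is that the Jacobi operator $J = \Delta + |A|^2 + \operatorname{Ric}(\nu,\nu)$ has a zeroth-order term of no definite sign, so the Hopf strong maximum principle cannot be applied to $v$ as it stands. The standard way around this is the classical trick of dividing by a positive Jacobi field. Set $f = u/w$ on $U$. A direct computation using $\Delta u = -qu$, $\Delta w = -qw$ (where $q$ denotes the zeroth-order coefficient of $J$) shows that the zeroth-order term cancels and $f$ satisfies the divergence-form equation
\[
\operatorname{div}(w^2 \nabla f) = 0
\]
on $U$. This is a linear, uniformly elliptic equation in $U$ with no zeroth-order term, so the strong maximum principle applies directly to $f$: since $f$ attains its local maximum $\lambda$ at the interior point $p$, we conclude $f \equiv \lambda$ in a neighborhood of $p$, that is, $u = \lambda w$ near $p$.

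Finally, I would extend this local identity to all of $M$. Since $Jv = 0$ with $v$ vanishing on an open set, the unique continuation property for second-order linear elliptic operators (Aronszajn's theorem) forces $v \equiv 0$ on the connected surface $M$, giving $u = \lambda w$ everywhere, as desired. The step I expect to require the most care in a fully written proof is the cancellation computation that produces the divergence-form equation for $f$; everything after that is a standard invocation of the maximum principle and unique continuation.
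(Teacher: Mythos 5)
Your proof is correct and follows essentially the same route as the paper: show $u-\lambda w$ vanishes in a neighborhood of $p$ by a strong maximum principle argument, then invoke unique continuation for the Jacobi equation on the connected surface $M$ to conclude $u\equiv\lambda w$. The only difference is in how the maximum principle step is justified: the paper applies it directly to the Jacobi field $u-\lambda w$ (legitimate because its local maximum value is exactly $0$, which neutralizes the indefinite zeroth-order term), whereas you reach the same local conclusion via the classical division trick $\operatorname{div}\bigl(w^2\nabla(u/w)\bigr)=0$ near $p$ where $w>0$ --- a valid and somewhat more explicit justification of the same step.
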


\begin{proof}
By hypothesis, $u-\lambda w$ has a local maximum value $0$.  Thus by the
strong maximum principle, $u-\lambda w$ vanishes in a neighborhood of $p$.
By the unique
continuation property for solutions of second order elliptic equations, 
$u-\lambda w\equiv 0$.
\end{proof}

\begin{theorem}\label{comparison}
Suppose $w$ is a positive solution of $Jw=0$ such that $\lim_{p\to \partial \Omega}w(p)=\infty$.
Then $\Omega$ is strictly $L^\infty$ stable.
\end{theorem}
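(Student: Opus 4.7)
The plan is to verify the two defining properties of strict $L^\infty$ stability in turn, using the given positive supersolution-like function $w$.

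The first property---strict positivity of the first eigenvalue on every compact subdomain---is essentially immediate from Lemma~\ref{standard}, since we are handed a positive solution $w$ of $J w = 0$ on $\Omega$. So nothing beyond quoting that lemma is needed.

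The second property---nonexistence of nonzero bounded Jacobi fields---is the content of the theorem. I would argue as follows. Suppose $u$ is a bounded Jacobi field on $\Omega$; my goal is to show $u \equiv 0$. Consider the ratio $u/w$. Since $|u|$ is bounded on $\Omega$ and $w(p) \to \infty$ as $p \to \partial\Omega$ (meaning as $p$ leaves every compact subset of $\Omega$), the function $u/w$ extends continuously to $\overline{\Omega}$ by setting it equal to $0$ on $\partial\Omega$. In particular $u/w$ attains its supremum on $\overline{\Omega}$. If $\lambda := \sup_\Omega (u/w) > 0$, then the supremum is not attained on $\partial\Omega$, so it is attained at some interior point $p\in\Omega$, where both $u(p)$ and $w(p)$ are positive. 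Lemma~\ref{ratio-lemma} then forces $u \equiv \lambda w$ on the (connected) component of $\Omega$ containing $p$. But $\lambda w$ blows up at $\partial\Omega$ while $u$ is bounded, a contradiction. Hence $\lambda \le 0$, i.e., $u \le 0$ on $\Omega$. Applying the same reasoning to the Jacobi field $-u$ gives $u \ge 0$, so $u \equiv 0$.

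The only mildly delicate point is making the phrase ``$u/w$ extends to $0$ on $\partial\Omega$'' precise enough to justify that the supremum, if positive, is attained in the interior. This is handled by observing that, for any $\varepsilon>0$, the set $\{p\in\Omega: |u(p)|/w(p) \ge \varepsilon\}$ is contained in $\{w \le \|u\|_\infty/\varepsilon\}$, which is a compact subset of $\Omega$ by the hypothesis $w\to\infty$ at $\partial\Omega$; thus $u/w$ attains its supremum, and if that supremum is positive, it is attained at an interior point. That is the main (minor) obstacle; everything else is a direct application of the preceding two lemmas.
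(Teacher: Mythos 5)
Your proposal is correct and follows essentially the same route as the paper: strict stability of compact subdomains via Lemma~\ref{standard}, and nonexistence of nonzero bounded Jacobi fields by showing that $u/w$ would attain a positive (local) maximum in the interior and then invoking Lemma~\ref{ratio-lemma} to force $u$ to be a multiple of the unbounded function $w$, a contradiction. Your extra care in verifying that the supremum of $u/w$ is attained (via compactness of $\{|u|/w\ge\varepsilon\}$) just makes explicit a point the paper treats briefly.
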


\begin{proof}
We have to show that each compact subdomain is stable and that there are no nonzero bounded
Jacobi fields on $\Omega$.  
By Lemma~\ref{standard}, each compact subdomain is stable.
Thus we need only show that there are no nonzero, bounded Jacobi fields.

Suppose $u:\Omega\to \RR$ is a nonzero, bounded Jacobi field on $\Omega$.
We may suppose that $u>0$ at some points.
Since $u/w$ is positive at some points and tends to $0$ on $\partial \Omega$, 
 it has a  local maximum $\lambda>0$ at some point $\Omega$.  
 By Lemma~\ref{ratio-lemma}, $u\equiv kw$, which is impossible since $u$ is bounded $w$ is unbounded.
 \end{proof}

\begin{corollary} \label{co:plane}
A totally geodesic plane in $\h^3$ is strictly $L^\infty$ stable.
\end{corollary}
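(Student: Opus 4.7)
The plan is to apply Theorem~\ref{comparison}: it suffices to construct, on a totally geodesic plane $P\subset \h^3$, a positive Jacobi field $w$ that tends to $+\infty$ as one leaves every compact subset of $P$.

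First I will identify the Jacobi operator on $P$. Since $P$ is totally geodesic the second fundamental form vanishes, $|A|^2\equiv 0$, and since $\h^3$ has constant sectional curvature $-1$ the ambient Ricci curvature satisfies $\operatorname{Ric}(N,N)=-2$. Hence $J=\Delta_P - 2$, where $\Delta_P$ is the intrinsic Laplacian on $P$. Because $P$ is isometric to $\h^2$, the problem reduces to constructing a positive solution of $\Delta u=2u$ on $\h^2$ that blows up at the ideal boundary.

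Next I propose $w(p):=\cosh d(p,p_0)$, where $p_0\in P$ is an arbitrary base point and $d(\cdot,p_0)$ denotes intrinsic hyperbolic distance. Writing the Laplacian in geodesic polar coordinates around $p_0$, a radial function $f(r)$ satisfies $\Delta f = f''(r)+\coth(r)\,f'(r)$, and a one-line computation gives
\[
  \Delta_P w = \cosh r + \coth(r)\sinh r = 2\cosh r = 2w,
\]
so $Jw = 0$. Plainly $w\ge 1>0$, and since compact subsets of $P$ are precisely those bounded in $d(\cdot,p_0)$, the sublevel sets $\{w\le L\}$ are compact, i.e.\ $w\to +\infty$ at infinity in the sense required by Theorem~\ref{comparison}. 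That theorem then yields strict $L^\infty$ stability.

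Since the argument reduces to a radial computation, no serious obstacle arises. An alternative, more geometric construction of $w$ is to take the normal component $\langle X,N\rangle$ along $P$ of the ambient Killing field $X$ on $\h^3$ generating hyperbolic translations along the geodesic through $p_0$ perpendicular to $P$; these translations displace every point of $P$ strictly into one component of $\h^3\setminus P$, so $\langle X,N\rangle$ is automatically positive, and normal components of Killing fields restrict to Jacobi fields on any minimal surface, giving $Jw=0$ without calculation.
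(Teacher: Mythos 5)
Your proof is correct and follows essentially the same route as the paper: produce a positive Jacobi field on the plane that blows up at infinity and invoke Theorem~\ref{comparison}. The paper's Jacobi field ``coming from dilations about $0$'' (for the hemisphere in the upper halfspace model) is exactly your $w=\cosh d(\cdot,p_0)$ — indeed your Killing-field alternative is literally the paper's argument — so your explicit radial computation just makes the paper's two-line proof fully detailed.
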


\begin{proof}
Without loss of generality we can assume that the plane is a hemisphere centered at the origin in the
 upper halfspace model of $\h^3$.
Consider the Jacobi field $w$ that comes from dilations about $0$.
\end{proof}

\begin{theorem}\label{th:boundary-point}
Let $M$ be an area-minimizing surface in $\h^3$ with $\partial M\subset \partial\h^3$.
Let $p$ be a regular point of $\partial M$, so that (in the upper halfspace model)
$M\cup\partial  M$ is a regular manifold-with-boundary near $p$.

Let $u$ be a bounded, nonnegative Jacobi field on $M$.  
Then $\lim_{q\to p}u(q)=0$.
\end{theorem}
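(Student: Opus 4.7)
The plan is to prove the theorem by a blow-up argument, reducing to the strict $L^\infty$ stability of a totally geodesic plane established in Corollary~\ref{co:plane}.

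In the upper half-space model I would first normalize coordinates so that $p$ is the origin of $\partial\h^3$, so that the tangent direction of $\partial M$ at $p$ lies along the $x$-axis, and so that the tangent half-plane of $M\cup\partial M$ at $p$ is the totally geodesic half-plane $T:=\{y=0,\,z>0\}$ (recall, as used in the proof of Theorem~\ref{strong-local-uniqueness-theorem}, that $M\cup\partial M$ meets $\partial\h^3$ orthogonally). Parameterize $\partial M$ near $p$ as $\gamma(t)=(t,a(t),0)$ with $a(0)=a'(0)=0$ and write $M$ locally as the graph $y=g(x,z)$ over $T$, where $g(x,0)=a(x)$ and $g_z(x,0)=0$ (the latter is the orthogonality condition at $\partial\h^3$). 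Given an arbitrary sequence $q_n=(x_n,y_n,z_n)\in M$ with $q_n\to p$, set $p_n:=(x_n,a(x_n),0)\in\partial M$ and consider the hyperbolic isometry
\[
 \Psi_n(x,y,z):=\bigl((x-x_n)/z_n,\;(y-a(x_n))/z_n,\;z/z_n\bigr),
\]
the composition of a horizontal translation sending $p_n$ to the origin with a dilation by $1/z_n$. Set $\widetilde M_n:=\Psi_n(M)$ and $\widetilde u_n:=u\circ \Psi_n^{-1}$, so $\widetilde u_n$ is a bounded, nonnegative Jacobi field on the area-minimizing surface $\widetilde M_n$ with $\|\widetilde u_n\|_\infty=\|u\|_\infty$.

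The key computation is that $\widetilde M_n$ converges smoothly to $T$ on every compact subset of $T$. Writing $\widetilde M_n$ as the graph $y=\eta_n(\xi,\zeta)$ over $T$ via $x=x_n+z_n\xi$, $z=z_n\zeta$, and Taylor-expanding $g$ at $(x_n,0)$ (using $g(x,0)=a(x)$ and $g_z(x,0)=0$, which forces $g_{xz}(x,0)=0$), one obtains
\[
 \eta_n(\xi,\zeta)=a'(x_n)\,\xi+\tfrac{z_n}{2}\bigl(a''(x_n)\,\xi^2+g_{zz}(x_n,0)\,\zeta^2\bigr)+O(z_n^2),
\]
which tends to $0$ in $C^k$ on every compact subset of $T$ because $a'(x_n)\to a'(0)=0$ and $z_n\to 0$. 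In particular $\Psi_n(q_n)\to(0,0,1)$. Standard Schauder estimates applied to the uniformly bounded Jacobi fields $\widetilde u_n$ on $\widetilde M_n$, whose Jacobi operators converge smoothly to $J_T=\Delta_T-2$ on $T$, yield a $C^k_{\rm loc}(T)$-convergent subsequence with limit $\widetilde u_\infty$ defined on \emph{all} of $T$, bounded, nonnegative, and satisfying $J_T\widetilde u_\infty=0$. By Corollary~\ref{co:plane}, $\widetilde u_\infty\equiv 0$. Since every subsequence has the same zero limit, the full sequence converges and
\[
 u(q_n)=\widetilde u_n\bigl(\Psi_n(q_n)\bigr)\;\longrightarrow\;\widetilde u_\infty(0,0,1)=0,
\]
which proves $\lim_{q\to p}u(q)=0$.

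The main obstacle is obtaining the \emph{global} convergence $\widetilde M_n\to T$ on every compact subset of $T$, rather than merely on some fixed neighborhood of $(0,0,1)$; otherwise the limiting Jacobi field $\widetilde u_\infty$ would live only on a proper subset of $T$, and Corollary~\ref{co:plane} could not be invoked. The Taylor expansion of $\eta_n$ above is precisely what delivers this: centering $\Psi_n$ at the moving boundary point $p_n\in\partial M$ (rather than at the fixed point $p$), together with the vanishing of $a'(0)$ (tangency of $\partial M$ to the $x$-axis at $p$) and of $g_z$ along $\{z=0\}$, ensures that every term in $\eta_n$ tends uniformly to $0$ on compacta as $n\to\infty$, and this is what lets us handle even the tangential approaches $z_n/|q_n|\to 0$ alongside the transverse ones.
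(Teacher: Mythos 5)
Your proof is correct and follows essentially the same route as the paper's: rescale by $1/z_n$ about a point at (or within $O(z_n^2)$ of) $\partial M$, obtain smooth convergence of the rescaled surfaces to a vertical totally geodesic plane, pass the bounded Jacobi field to the limit, and kill the limit field using the strict $L^\infty$ stability of the plane (Corollary~\ref{co:plane}). The only differences are cosmetic: the paper blows up along a sequence realizing $\limsup_{q\to p}u(q)$ and derives a contradiction, and it simply asserts the convergence of the rescaled surfaces to a totally geodesic plane, whereas you take an arbitrary sequence and verify that convergence explicitly via the graph/Taylor expansion (using the orthogonality $g_z(x,0)=0$), which is a legitimate way to fill in the same step.
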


\begin{proof}
Without loss of generality, $p=0$ in the upper half space model of $\h^3$.
Let $p_n\in M$ be points such that $p_n\to 0$ and such that 
\[
    u(p_n) \to \limsup_{q\to 0}u(q).
\]
Suppose the supremum limit is nonzero. Then we may assume it is $1$.
Now make a Euclidean translation and dilation of $\h^3$ that moves
$M$ to $M_n$ and that moves $p_n$ to $(0,0,1)$.
Let $u_n$ be the Jacobi field on $M_n$ corresponding to $u$ on $M$.
After passing to a subsequence, the $M_n$ converge to a totally geodesic
plane $M^*$ and the $u_n$ converge to a bounded Jacobi field $u^*$ on $M^*$
that attains its maximum value ($1$) at the point $(0,0,1)$.  But that contradicts
the strict $L^\infty$ stability of a totally geodesic plane.
\end{proof}

%%%%%%%%
\section{Minimal Strips and Skillets}

In this section we define and analyze minimal strips and minimal skillets.
They will be important for us because they arise as blowups in the proof
of the Bridge Theorem~\ref{th:bridge}.

\begin{theorem} \label{th:strip}
In the upper half space model of $\h^3$, let $K$ be the strip
\[
  [-1,1]\times \RR \times \{0\} = \{(x,y,z): |x|\le 1, z=0\}
\]
together with the point at infinity.

Then there is a unique area-minimizing surface $M\subset \h^3$ with boundary $\partial K$,
and $M$ has the form
\[
   \{(x,y,z):  z = u(x), |x|<1\}
\]
where $u:(-1,1)\to \RR$ is a smooth function such that
\begin{align*}
u''&<0,
\\
u(x)&\equiv u(-x), 
\\
\lim_{x\to \pm 1} u(x)&=0.
\end{align*}
Furthermore, the surface $M$ is strictly $L^\infty$ stable.
\end{theorem}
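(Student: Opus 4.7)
The plan is to construct $M$ as a translation-invariant graph, use a foliation of $\h^3$ by hyperbolic dilates of $M$ as a calibration for unique area-minimization, and establish strict $L^\infty$ stability by combining the Jacobi field from this dilation family with an Arzel\`a-Ascoli argument in the $y$-direction.

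For the construction and the foliation, I look for $M$ in the form $\{(x,y,u(x)) : |x|<1,\, y \in \RR\}$. Minimality reduces to the Euler--Lagrange equation of $\int \sqrt{1+u'^2}/u^2\,dx$, which admits the first integral $u^2\sqrt{1+u'^2} = u(0)^2$. Differentiating yields $u u'' = -2(1+u'^2)$, hence $u''<0$; the ODE is invariant under $x\mapsto -x$, so solutions are even, and fixing the scale $u(0)$ so that $u(\pm 1)=0$ produces the desired profile. The maps $\delta_a(x,y,z) = (ax, ay, az)$ are hyperbolic isometries, so $M_a := \delta_a M$ is minimal with profile $z = au(x/a)$. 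For fixed $(x_0,z_0)$ with $z_0>0$, the function $a\mapsto au(x_0/a)$ is strictly increasing on $a>|x_0|$ because its derivative $u(x_0/a) - (x_0/a)u'(x_0/a)$ is the positive $z$-intercept of the tangent line to the concave graph $(x,u(x))$ at $x=x_0/a$; its limits at the endpoints are $0$ and $\infty$, so $\{M_a\}_{a>0}$ is a smooth foliation of $\h^3$ by minimal surfaces with $M=M_1$ as one leaf. Taking $\nu$ a unit normal to the foliation and $\omega = \iota_\nu\mathrm{vol}_{\h^3}$, minimality of each leaf gives $d\omega=0$ and $|\omega|_p\le 1$ with equality precisely on leaf tangent planes. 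Stokes applied to compact pieces yields $\operatorname{area}(S)\ge \operatorname{area}(M')$ whenever $\partial S=\partial M'$ with $M'\subset M$ compact, so $M$ is area-minimizing; equality forces $S$ to be tangent to the foliation, hence a union of leaves, and the boundary then selects $M=M_1$, giving uniqueness.

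For strict $L^\infty$ stability, decomposing the variation $\partial_a(\delta_a p)|_{a=1} = p$ against the hyperbolic unit normal and simplifying via the first integral gives the positive Jacobi field
\[
w(x,y) = \frac{u(x)-xu'(x)}{u(x)\sqrt{1+u'(x)^2}} = \frac{u(x)\bigl(u(x)-xu'(x)\bigr)}{u(0)^2}.
\]
Positivity holds since $xu'(x)\le 0$ throughout, and from $-u'\sim u(0)^2/u^2$ as $x\to \pm 1$ one sees $w\sim 1/u\to\infty$ at the boundary, so Lemma~\ref{standard} gives positivity of the first eigenvalue on every compact subdomain. To rule out nonzero bounded Jacobi fields, let $\phi$ be bounded Jacobi with (after replacing $\phi$ by $-\phi$ if necessary) $\lambda := \sup_M \phi/w > 0$, approached along a sequence $(x_n,y_n)$. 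Since $w\to\infty$ as $x\to\pm 1$, the $x_n$ stay in a compact subinterval of $(-1,1)$, and after passing to a subsequence $x_n\to x_\infty\in(-1,1)$. The translates $\phi_n(x,y):=\phi(x,y+y_n)$ are bounded Jacobi fields, and by elliptic regularity and Arzel\`a-Ascoli they converge on compacta to a bounded Jacobi field $\phi_\infty$ with $\phi_\infty(x_\infty,0)/w(x_\infty) = \lambda$ and $\phi_\infty/w\le \lambda$ everywhere. Lemma~\ref{ratio-lemma} then gives $\phi_\infty=\lambda w$, which contradicts $\phi_\infty$ bounded versus $w$ unbounded; hence no nonzero bounded Jacobi field exists.

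The main obstacle is the strict $L^\infty$ stability step: since $w$ blows up only as $x\to \pm 1$ and not as $|y|\to \infty$, Theorem~\ref{comparison} does not apply verbatim, and the Arzel\`a-Ascoli argument exploiting $y$-translation invariance of $M$ is essential. The ODE analysis, the monotonicity used to build the foliation, and the calibration computation are all routine.
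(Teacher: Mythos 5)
Your construction of the profile (the first integral $u^2\sqrt{1+u'^2}=u(0)^2$, concavity, evenness, $u(\pm1)=0$), the foliation by dilates $M_a$, the calibration argument showing that $M$ is area-minimizing, and the stability proof are all sound; in fact your stability argument (the positive dilation Jacobi field $w=\frac{u-xu'}{u\sqrt{1+u'^2}}$ blowing up as $x\to\pm1$, a maximizing sequence for $v/w$, $y$-translation plus Arzel\`a--Ascoli, then Lemma~\ref{ratio-lemma}) is essentially the paper's own argument, and your observation that Theorem~\ref{comparison} does not apply verbatim is correct. The genuine gap is in the uniqueness step. In this paper ``uniquely area-minimizing'' (Definition~\ref{def:area}) means that $M$ is the only properly embedded area-minimizing surface $N\subset\h^3$ whose ideal boundary (limit set) equals $\partial K$. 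Your equality-case analysis of the calibration inequality only concerns competitors $S$ with $\partial S=\partial M'$ for a compact piece $M'\subset M$; it proves that compact pieces of $M$ are uniquely minimizing (the content of Lemma~\ref{lem:U}), but a second global minimizer $N$ with $L(N)=\partial K$ is not a competitor for any compact piece of $M$, so ``the boundary then selects $M=M_1$'' does not follow from Stokes. To close this you would need either a barrier/sliding argument against the leaves $M_a$ --- which requires controlling $N$ near the ideal boundary and near $\infty$, where $\partial K$ fails to be smooth (e.g.\ ruling out that the supremum of the leaf parameter along $N$ is approached only as $z\to\infty$ or $|y|\to\infty$), typically by translation/dilation limits and the Compactness Theorem~\ref{compactness-theorem} --- or an asymptotic area comparison with collars whose area is controlled via Hardt--Lin boundary regularity. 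The paper takes a different route: any minimizer with boundary $\partial K$ is connected (the pair of planes $x=\pm a$ is not minimizing), the innermost and outermost minimizers $M_{\rm in}$, $M_{\rm out}$ of Theorem~\ref{th:E} are $y$-translation invariant by Remark~\ref{re:unique}, hence cylinders over profile curves with compact closure and common endpoints, and a sliding-by-dilation argument plus the maximum principle forces $M_{\rm in}=M_{\rm out}$, which gives uniqueness for every minimizer since each is squeezed between them. This asymptotic uniqueness is not cosmetic: it is precisely what is used later (Claim~\ref{skillet-asymptotics-claim}, Claim~\ref{possible-limits-claim}) to identify blow-up limits in the Bridge Theorem.

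A secondary, fixable point: ``area-minimizing'' here is mod $2$, so competitors may be nonorientable and you cannot literally integrate the calibration $2$-form $\omega$ over $S$; the standard remedy is to apply the divergence theorem to the unit normal field of the foliation over the region enclosed mod $2$ by $S\cup M'$, where on $M'$ the outward normal agrees with $\pm\nu$ pointwise. This is routine, unlike the uniqueness issue above, but it should be said.
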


\begin{definition}\label{def:strip}
The surface $M$ in Theorem~\ref{th:strip} will be called the {\em standard minimal strip}.
A surface related to $M$ by an isometry of $\h^3$ will be called a {\em minimal strip}.
\end{definition}

\begin{proof}[Proof of Theorem~\ref{th:strip}]
Note that each of the planes $x=1$ and $x=-1$ is uniquely area minimizing.
However:

\begin{claim*}
For $a>0$, let $P_a$ be the pair of planes $x=a$ and $x=-a$.
Then $P_a$ is not area-minimizing.
\end{claim*}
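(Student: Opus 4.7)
My plan is to show that a suitable compact portion of $P_a$ is not area-minimizing, which suffices by the definition for non-compact surfaces. The key observation is that $P_a$ is invariant under $y$-translations of $\h^3$. A $y$-invariant surface in $\h^3$ corresponds to a curve in the upper half-plane $\h^2$ with coordinates $(x,z)$ (where the $xz$-metric $(dx^2+dz^2)/z^2$ is the hyperbolic plane metric), and the area of such a surface per unit $y$-length equals the hyperbolic length of the corresponding curve. Under this correspondence, $P_a$ becomes the pair of vertical rays $\{x = \pm a,\ z > 0\}$ in $\h^2$, which meet at the ideal point $\infty \in \partial\h^2$. As a mod-$2$ chain, their boundary reduces to $\{(a,0),(-a,0)\}$ (the two copies of $\infty$ cancel), and the unique length-minimizing chain with this boundary is the geodesic semicircle $x^2+z^2=a^2$ — not the pair of rays.

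To turn this into a concrete area inequality I will fix large $Y,h$ and small $\epsilon>0$ and set
\[
\Pi := P_a\cap\{|y|\le Y,\ \epsilon\le z\le h\},
\]
and build a competitor $\tilde S$ with $\partial\tilde S = \partial\Pi$ by, in the $\h^2$-picture, replacing the two vertical segments with two hyperbolic geodesic arcs: a top cap along $x^2+z^2=a^2+h^2$ joining $(\pm a,h)$, and a bottom cap along $x^2+z^2=a^2+\epsilon^2$ joining $(\pm a,\epsilon)$. Lifted to $\h^3$ and truncated to $|y|\le Y$, these become two $y$-invariant minimal surfaces. To close off the newly created arc boundaries at $y=\pm Y$, I add compact regions lying inside the two totally geodesic planes $\{y=\pm Y\}$, bounded by the semicircular traces of the caps together with the vertical segments of $\partial\Pi$.

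The comparison then reduces to a length comparison in $\h^2$. The two vertical segments have total length $2\log(h/\epsilon)$, while the two geodesic arcs have total length $2\log\bigl((\sqrt{a^2+h^2}+a)/h\bigr)+2\log\bigl((\sqrt{a^2+\epsilon^2}+a)/\epsilon\bigr)$, which behaves like $2a/h+2\log(2a/\epsilon)+o(1)$ as $h\to\infty$ and $\epsilon\to 0$. Integrating over $|y|\le Y$, the area savings is of order $4Y\log(h/(2a))$, while the two side regions in $\{y=\pm Y\}$ contribute only $O(1)$, independent of $Y$. Hence for $h>2a$ and $Y$ sufficiently large, $\operatorname{area}(\tilde S)<\operatorname{area}(\Pi)$, so $P_a$ is not area-minimizing. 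The main obstacle is the bookkeeping needed to verify $\partial\tilde S=\partial\Pi$ as mod-$2$ chains — the four semicircular arcs of the caps at $y=\pm Y$ must cancel exactly with the corresponding boundary arcs of the two side regions — but this is automatic once the side regions are chosen as described; the length inequality itself is immediate from uniqueness of $\h^2$-geodesics.
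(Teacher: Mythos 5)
There is a genuine gap, and it is at the very first step: the dimensional reduction is done with the wrong weight. For a $y$-invariant surface over a Euclidean interval $|y|\le Y$, the hyperbolic area element factors as $z^{-2}\sqrt{dx^2+dz^2}\,dy$, so the area equals $2Y$ times the length of the profile curve in the weighted metric $z^{-2}\sqrt{dx^2+dz^2}$, \emph{not} $2Y$ times its hyperbolic length $z^{-1}\sqrt{dx^2+dz^2}$. (Relatedly, cylinders over $\h^2$-geodesics are not minimal in $\h^3$: the paper's own profile ODE for the strip, $u\,u''+2(1+(u')^2)=0$, differs from the geodesic equation $u\,u''+1+(u')^2=0$.) With the correct weight your competitor does not save anything in the regime you specify. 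Per unit Euclidean $y$-length, the two vertical segments cost $2\bigl(\tfrac1\epsilon-\tfrac1h\bigr)$, while your two circular caps cost $\tfrac{2a}{\epsilon\sqrt{a^2+\epsilon^2}}+\tfrac{2a}{h\sqrt{a^2+h^2}}$; the difference (caps minus segments) is $\tfrac2h\bigl(1+\tfrac{a}{\sqrt{a^2+h^2}}\bigr)-\tfrac2\epsilon\bigl(1-\tfrac{a}{\sqrt{a^2+\epsilon^2}}\bigr)\approx \tfrac2h-\tfrac{\epsilon}{a^2}$, which is \emph{positive} for fixed $h>2a$ and $\epsilon\to0$ (e.g.\ $a=1$, $h=3$, $\epsilon=10^{-3}$ gives caps costlier by about $0.88$ per unit $y$). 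So the advertised savings of order $4Y\log(h/(2a))$ is an artifact of the $z^{-1}$ weight; with the true functional your construction, plus the extra side regions at $y=\pm Y$, has strictly more area than $\Pi$ in your stated parameter range, and the proof as written fails.

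The slab-and-caps strategy can be repaired, but only after fixing the functional and the parameters: with the $z^{-2}$ weight the caps do beat the segments provided $h\gtrsim a^2/\epsilon$ (e.g.\ take $\epsilon=a$ and $h$ large, giving a bounded savings of order $1/a$ per unit $y$, which still grows linearly in $Y$ against the $Y$-independent cost of the two side regions). Note also that your closing caps at $y=\pm Y$ are not minimal, which is harmless for a comparison but should not be asserted. For contrast, the paper's argument avoids all of this: since the dilation $(x,y,z)\mapsto\lambda(x,y,z)$ is an isometry carrying $P_a$ to $P_{\lambda a}$, it suffices to treat a single small $a$; one then takes a fixed solid Euclidean cylinder $C$ perpendicular to the two planes and compares the two disks $P_a\cap C$ (whose hyperbolic area is independent of $a$, because $x$-translations are isometries) with the lateral annulus of $C$ between the planes, whose hyperbolic area tends to $0$ as $a\to0$. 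That one-line comparison is the cleaner route to the claim.
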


To prove the claim,
note that $P_a$ and $P_{\lambda a}$ are related by the hyperbolic isometry
\[
(x,y,z)\mapsto (\lambda x, \lambda y, \lambda z). 
\]
  Thus it suffices
to prove the claim for one value of $a$.
Let $C$ be a solid Euclidean cylinder in $\{(x,y,z):z>0\}$ that is perpendicular
to the planes $x=\pm a$.
Note that the hyperbolic area of the two disks $P_a\cap C$ is independent of $a$,
but that the hyperbolic area of the annular portion of $C$ between the two
planes $y=\pm a$ tends to $0$ as $a\to 0$.
Thus for small $a$, the pair $P_a\cap C$ is not area-minimizing,
which implies that $P_a$ is not area-minimizing, proving the claim.

Now suppose $M$ is an area-minimizing surface with boundary $\partial P_a$.
If $M$ were not connected, it would be equal to $P_a$ since the planes $x=a$
and $x=-a$ are each uniquely area minimizing, contradicting the claim.  Thus $M$
must be connected.

Let $M_{\rm in}$ and $M_{\rm out}$ be the innermost and outermost least area surfaces with
boundary $\partial K$, as in Theorem~\ref{th:E}.   As we have just seen, $M_{\rm in}$
and $M_{\rm out}$ are connected.

Then (see Remark~\ref{re:unique}), 
$M_{\rm in}$ and $M_{\rm out}$ are both invariant under translations $(x,y,z)\mapsto (x,y+c,z)$.
It follows that 
\[ 
  M_{\rm in}= \Pi^{-1}C_{\rm in}
\]
and
\[
  M_{\rm out} = \Pi^{-1} C_{\rm out},
\]
where $\Pi: (x,y,z)\mapsto (x,z)$ and 
where $C_{\rm in}$ and $C_{\rm out}$ are smooth curves in $\{(x,z): z>0\}$
joining $(-1,0)$ to $(1,0)$.

Now if $M_{\rm in}\ne M_{\rm out}$, there is some $\lambda>0$ such that $\lambda C_{\rm in}$ intersects
$C_{\rm out}$.  Thus there is a largest $\lambda$ (since $C_{\rm in}$ and $C_{\rm out}$ have the same
endpoints and have compact closures.)  But then $\lambda M_{\rm in}$ and $M_{\rm out}$ violate the maximum
principle. 

Thus there is a unique least area surface $M=M_{\rm in}=M_{\rm out}$ with boundary $\partial K$.

Now where  the tangent to the curve $C=C_{\rm in}=C_{\rm out}$ is not vertical, it is locally the graph
of a function $z=u(x)$ that satisfies a 
2nd order ODE\footnote{Note that $(x,y)\mapsto u(x)$ is a solution of the Euler-Lagrange
equation for the hyperbolic area functional. That is a 2nd order PDE, but since $u$ is a function
of $x$ alone, the PDE reduces to an ODE.}, namely 
\[
u(x) \cdot u''(x)+2 (1+(u'(x)^2)=0, 
\]
from which we see that $u''<0$
and thus that $C$ has the form 
\[
  C=\{(0,y,u(y)) \; : \; |y |<1\}, \quad \lim_{y\to \pm 1} u(y)=0.
\]
By Remark~\ref{re:unique}, $M$ is invariant under $(x,y,z)\mapsto (-x,y,z)$
and hence the function $u$ is even.

So, summarizing all the information that we have, we 
are able to deduce that $x \cdot u'(x) \le 0$ for $-1<x<1$. Furthermore, we know that 
\[
\lim_{x \to -1} u'(x)=+\infty, \quad \mbox{and} \quad \lim_{x \to +1} u'(x)=-\infty.
\]

{ Let $w^*$ be the Jacobi field on $M$ associated
to dilations $(x,y,z)\mapsto \lambda(x,y,z)$.  
 Note that $w^*(x,y,z)$ is independent of $y$:
\begin{equation}\label{eq:independent-of-y}
  w^*(x,y,z)=w^*(x,0,z).
\end{equation}
Note also that  $w^*$ is strictly positive everywhere, so compact domains in $M$ are strictly stable. 
A straightforward computation 
gives
\[
 w^*= \frac{-x u'+u}{u \sqrt{1+(u')^2}},
\]
so
\begin{equation}\label{strip-boundary-blowup}
\text{$w^*\to\infty$ uniformly as $x\to \pm 1$.}
\end{equation}
  Now suppose that $M$ is not $L^\infty$ strictly stable, i.e., that $M$ 
  has a bounded, nonzero Jacobi field $v$.    We may assume that $v$ is strictly
  positive at some points.
  Let $\Lambda$ be the supremum of $v/w^*$, and let 
  $p_n:=(x_n,y_n,z_n)\in M$ be a sequence of points such that 
\[
  v(p_n)/w^*(p_n) \to \Lambda.
\]
By~\eqref{strip-boundary-blowup}, the $|x_n|$ is bounded away from $1$.
Thus by passing to a subsequence, we can assume that the points
$(x_n,0,z_n)$ converge to a point $p\in M$ and  that the Jacobi fields
$(x,y-y_n,z)\mapsto v(x,y,z)$ converge smoothly to a limit Jacobi field $\hat{v}$.
Note that $\hat{v}/w^*$ attains its maximum value $\Lambda$ at $p$.  Thus 
the Jacobi field $\hat{v}-\Lambda \cdot w^*$ attains
its maximum value, namely $0$, at $p$.  By the maximum principle, 
   $\hat{v}- \Lambda \cdot w^*$
must be identically $0$.  But that is impossible since $\hat v$ is bounded and $ \Lambda w^*$ is unbounded.}
\end{proof}

\begin{definition}[Skillet]
Suppose $u: \RR \to [0,+\infty]$ is a continuous, compactly supported function such that $u(x)=\infty$ if and
only if $|x|\le 1$ and such that $\mathcal{A}=\{ (x,y) \in \R^2 \; : \; y \le u(x) \}$ has a uniformly smooth boundary, with $u''(x)\ge 0$ along the boundary of $\mathcal{A}$ (see Fig.~\ref{fig:skillet}.)
Then the set $\mathcal{A}$ is called a {\em skillet}.
\end{definition}

\begin{figure}[htbp]
    \begin{center}
        \includegraphics[width=.75\textwidth]{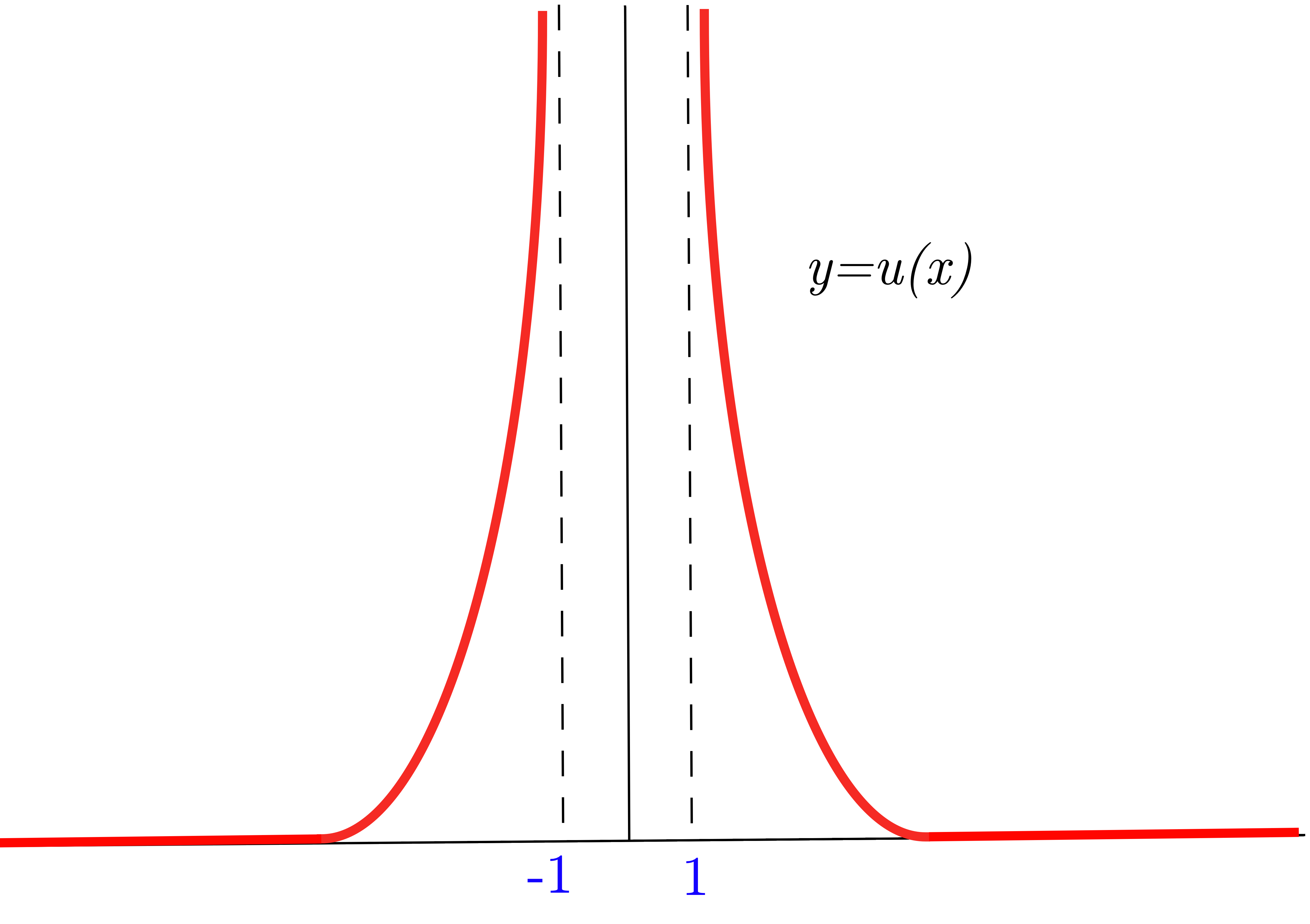}
        \includegraphics[width=.75\textwidth]{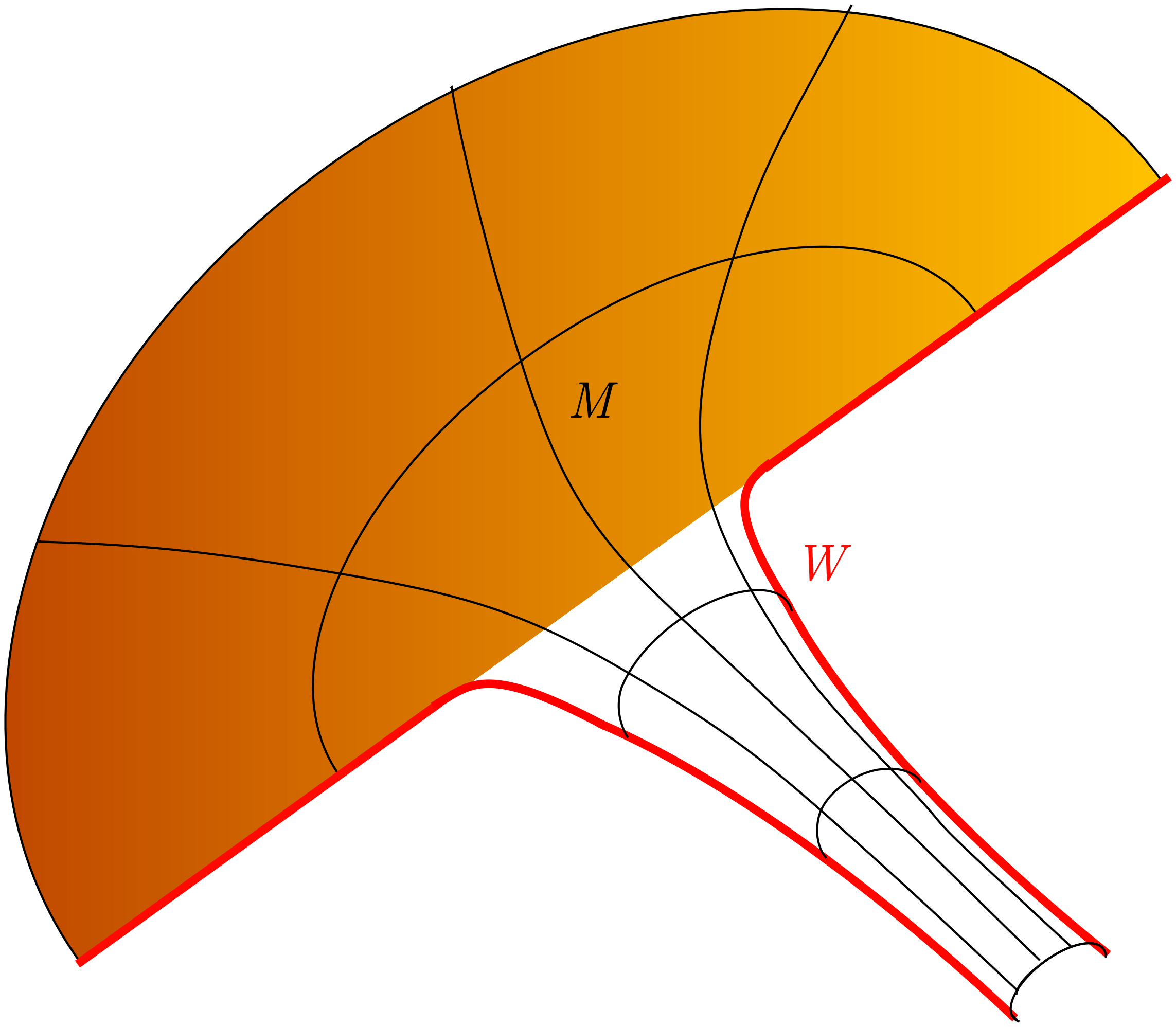}
    \end{center}
   \caption{The boundary of a skillet and the minimal skillet $M$} \label{fig:skillet}
\end{figure}

%%%%%%%%%%

\begin{theorem} \label{th:skillet}
Let $\mathcal{A}$ be a skillet in $\h^3$.
Then there exists a  properly embedded, uniquely area-minimizing
surface $M$ satisfying $\partial M=\partial \mathcal{A}$. 
The surface is a radial graph in the following sense: if $p=(0,y_p,0)$ with $y_p<0$,
if $H$ is the vertical halfplane $\{(x,0,z):z>0\}$, 
and if 
\begin{align*}
&\Pi: M \to H \\
&\Pi(q)= \overleftrightarrow{pq}\cap H,
\end{align*}
then $\Pi$ is a diffeomorphism.
Furthermore, $M$ has a normal vectorfield $\nu$ such that $\nu\cdot (0,1,0)$ is everywhere
strictly positive.
\end{theorem}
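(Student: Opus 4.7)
My plan is to imitate the proof of Theorem~\ref{th:strip} for the minimal strip, but using two families of hyperbolic symmetries in place of the single translation invariance available there: the $y$-translations $T_s(x,y,z)=(x,y+s,z)$ and the dilations $\Phi_\lambda(q)=p+\lambda(q-p)$ centered at the ideal point $p$. The former will produce the condition $\nu\cdot(0,1,0)>0$ and the latter the radial-graph property. The main obstacle I foresee is handling the unbounded handle $|x|\le 1$ of the skillet, both when constructing $M$ as a limit and when showing the translated or dilated copies form genuine foliations of a neighborhood of $M$.

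First, I would construct $M$ by exhausting $\mathcal{A}$ by bounded truncations $K_n:=\mathcal{A}\cap B_n$ (with $B_n$ a Euclidean ball in $\partial\h^3$ of radius $n$, corners rounded off so that $\partial K_n$ is piecewise smooth). The Basic Existence Theorem produces area-minimizing $M_n$ with $\partial M_n=\partial K_n$; choose $M_n$ to be the outermost minimizer $M_{\mathrm{out},n}$ from Theorem~\ref{th:E}. Passing to a subsequential limit via the Compactness Theorem gives a smooth limit $M$ on compact subsets of $\h^3$. That $\partial M=\partial\mathcal{A}$ follows from the convex hull property (to prevent $M_n$ from escaping up the handle) combined with Theorem~\ref{boundary-regularity-theorem} and the multiplicity-one case of the Compactness Theorem near smooth points of $\partial\mathcal{A}$.

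Next, for the two geometric conditions, observe that $T_s(\mathcal{A})=\{y\le u(x)+s\}$ is strictly nested in $s$. Applying the nested-boundary corollary (right after Theorem~\ref{th:E}) to bounded truncations and passing to the limit, the surfaces $T_s(M)$ are pairwise disjoint for $s$ near $0$ and sweep out an open region around $M$. Differentiating at $s=0$ produces a nowhere-vanishing Jacobi field proportional to $\nu\cdot(0,1,0)$; choosing the orientation of $\nu$ gives $\nu\cdot(0,1,0)>0$, the second assertion. An entirely analogous argument with the dilations $\Phi_\lambda$ (which are isometries of $\h^3$ since $p\in\partial\h^3$) yields a nowhere-zero Jacobi field $(q-p)\cdot\nu(q)$, i.e.\ transversality of $M$ to Euclidean rays from $p$. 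Combined with a boundary analysis of $\Pi$ along $\partial M$, this upgrades to the statement that $\Pi:M\to H$ is a diffeomorphism.

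Finally, uniqueness follows by a sliding argument: given any competing area-minimizing $M'$ with $\partial M'=\partial\mathcal{A}$, slide $M$ upward via $T_s$ until $T_s(M)$ first touches $M'$; the tangential contact and strong maximum principle force $T_s(M)=M'$, and matching boundaries at infinity forces $s=0$, so $M'=M$.
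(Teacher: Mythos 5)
Your overall strategy (construct $M$, then slide by a one-parameter family of isometries to get one-sidedness, the graph property, and uniqueness) is the right spirit, but the specific family you slide by, and the way you try to justify disjointness, are exactly where the argument breaks. The nested-boundary corollary after Theorem~\ref{th:E} requires $K_1\subset\Int(K_2)$, and this fails for $\mathcal{A}\subset T_s(\mathcal{A})$ viewed in $\partial\h^3$: both closed regions have the point at infinity on their boundary, and along the handle $|x|\le 1$ their boundary curves are asymptotic to the same pair of lines. Applying the corollary to truncations and ``passing to the limit'' can only yield \emph{non-strict} one-sidedness, never the strict disjointness or the foliation you assert; and since $M$ and $T_s(M)$ are asymptotic to the \emph{same} standard minimal strip as $y\to\infty$, strict separation is precisely the delicate point. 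The same defect kills your uniqueness argument: you cannot start the slide from a position where $T_s(M)$ is disjoint from the competitor $M'$ (they hug the same strip along the handle for every $s$), and the ``first touching point'' may occur only at infinity, where the strong maximum principle gives nothing. Ruling out touching at infinity is the real content of the proof, and your proposal does not supply it.

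The paper's proof is organized to avoid exactly this: it slides by \emph{dilations} $M^\lambda$ about an ideal point $p=(0,y_p,0)$ with $y_p<0$ (Claim~\ref{disjoint-dilations-claim}). Under such a dilation the asymptotic strip is rescaled to a strip over $[-\lambda,\lambda]$ and the asymptotic plane is moved, so $M^\lambda$ and any minimizer $N$ with $\partial N=\partial M$ are uniformly separated near infinity; combining this with the asymptotics of $M$ (Claims~\ref{skillet-asymptotics-claim} and~\ref{skillet-containment-claim}) and with Theorem~\ref{strong-local-uniqueness-theorem}, potential intersection points are confined to a compact set, and an open--closed argument in $\lambda$ gives $M^\lambda\cap N=\varnothing$ for $\lambda\ne 1$. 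Uniqueness and injectivity of $\Pi$ (take $N=M$) then follow at once. Note that the infinitesimal statement your dilation argument could at best produce --- transversality of $M$ to the rays from $p$ --- does not by itself give that each ray meets $M$ at most once; you would still need properness of $\Pi$ (which itself uses the asymptotics) plus a covering-space argument and connectedness of $M$, none of which you address. Likewise, $\nu\cdot(0,1,0)>0$ is obtained in the paper not from $y$-translations directly but by letting $y_p\to-\infty$ in the positive dilation fields to get a nonnegative translation field, then upgrading to strict positivity by the strong maximum principle, since $M\subset\{y\ge 0\}$ excludes translation invariance. Finally, a minor point: no exhaustion is needed for existence --- Theorem~\ref{th:principal} applies directly to $\mathcal{A}$, whose boundary is piecewise smooth with its only singular point at infinity.
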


{
\begin{definition}
The minimal surface $M$ in Theorem~\ref{th:skillet} is called a {\em minimal
skillet}.
\end{definition}}

\begin{proof}
By Theorem~\ref{th:principal}, there exists a properly embedded, area-minimizing surface $M$ with
$\partial M = \partial \mathcal{A}$.   Furthermore, $\overline{M}$ is a smooth, embedded manifold
with boundary except at the point at infinity (where $\partial\mathcal{A}$ is not smooth).

\begin{claim}\label{skillet-asymptotics-claim}
The surface $M$ is asymptotic to the standard minimal strip (see Definition~\ref{def:strip}) as $y\to \infty$,
and is asymptotic to the geodesic plane $H$ as $|x|\to\infty$.
In other words, 
if $M-(x,y,0)$ is the result of translating $M$ by $-(x,y,0)$, then $M-(0,y,0)$ converges smoothly to the 
standard minimal strip as $y\to\infty$, 
and $M-(x,0,0)$ converges smoothly to $H$ as $|x|\to \infty$.
\end{claim}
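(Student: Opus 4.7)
My plan is to apply the Compactness Theorem~\ref{compactness-theorem} to translates of $M$ and to identify the limits using the uniqueness statements already established. For the first assertion, fix $y_n\to+\infty$ and set $M_n:=M-(0,y_n,0)$. Each $M_n$ is area-minimizing and properly embedded in $\h^3$, with boundary $\Gamma_n:=\partial\mathcal{A}-(0,y_n,0)\subset\partial\h^3$. Because $u\equiv\infty$ on $[-1,1]$ and $\partial\mathcal{A}$ is uniformly smooth, on any compact subset of $\overline{\h^3}\setminus\{\infty\}$ the two ``handle'' branches of $\Gamma_n$ converge smoothly with multiplicity $1$ to the pair of lines $\{x=\pm 1,\,z=0\}=\partial K$ from Theorem~\ref{th:strip}, while the ``pan'' portion of $\Gamma_n$ eventually leaves every such set. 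The Compactness Theorem therefore produces a subsequence along which $M_n\cup\Gamma_n$ converges smoothly on compact subsets of $\overline{\h^3}\setminus\{\infty\}$ to $M^{*}\cup\partial K$ for some area-minimizing $M^{*}$.

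The decisive step is to rule out $M^{*}=\emptyset$, for which I would invoke the monotonicity corollary following Theorem~\ref{th:E}. For $\epsilon>0$ the shrunken strip $K_\epsilon:=[-1+\epsilon,1-\epsilon]\times\R$ lies in the interior of $\mathcal{A}$ (since $u\equiv\infty$ on $[-1,1]$), so the corollary gives $M(\partial K_\epsilon)\subset E(M,\mathcal{A})$. By Theorem~\ref{th:strip}, $M(\partial K_\epsilon)$ is the hyperbolic rescaling of $S^{*}$ by the factor $1-\epsilon$, and letting $\epsilon\to 0$ yields $S^{*}\subset\overline{E(M,\mathcal{A})}$. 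Translation invariance of $S^{*}$ in $y$ then propagates this to $S^{*}\subset\overline{E(M_n,\mathcal{A}-(0,y_n,0))}$ for every $n$. If $M^{*}$ were empty, smooth convergence $M_n\to\emptyset$ would force those enclosed regions to collapse onto $\partial\h^3$ on compact sets, contradicting the fact that $S^{*}$ contains points whose $z$-coordinate is bounded away from zero (e.g.\ the apex of $S^{*}$ at $x=y=0$). Hence $M^{*}\ne\emptyset$, and Theorem~\ref{th:strip} then forces $M^{*}=S^{*}$. Since every subsequential limit is $S^{*}$, the full sequence converges.

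The $|x|\to\infty$ statement is proved identically with $M_n:=M-(x_n,0,0)$: outside the compact support of $u$ we have $\partial\mathcal{A}\subset\{y=0\}$, so $\Gamma_n$ converges smoothly with multiplicity $1$ on compact subsets to the $x$-axis $=\partial\{y\le 0\}$. The analogous barrier is the halfplane $\{y\le-\epsilon\}$, which is contained in the interior of $\mathcal{A}$ because $u\ge 0$ has compact support; its unique minimizer is the translated geodesic halfplane $\{y=-\epsilon,\,z>0\}$, so letting $\epsilon\to 0$ together with $x$-translation invariance of $H$ yields $H\subset\overline{E(M_n,\mathcal{A}-(x_n,0,0))}$ for every $n$. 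Since $H$ is uniquely area-minimizing as a totally geodesic plane (by Proposition~\ref{convex-hull-proposition}), the same non-emptiness plus uniqueness argument forces $M_n\to H$ smoothly. The principal obstacle in either part is ensuring non-emptiness of the limit: the Compactness Theorem alone allows the $M_n$ to collapse smoothly onto the limiting boundary curves, and the monotonicity corollary to Theorem~\ref{th:E}, through the barrier surfaces $S^{*}$ and $H$, is what prevents this degeneration.
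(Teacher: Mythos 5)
Your overall strategy---translate, extract a subsequential limit with the Compactness Theorem~\ref{compactness-theorem}, and identify the limit using the unique area-minimization of the standard strip (Theorem~\ref{th:strip}) and of the vertical halfplane (via the convex hull property, Proposition~\ref{convex-hull-proposition})---is exactly the paper's argument; the paper's proof is essentially the one-sentence citation of those two uniqueness facts. Where you diverge is in treating non-emptiness of the limit as an extra ``decisive step''. It is not needed: since the translated boundary curves converge with multiplicity $m=1$, assertions (1) and (3) of Theorem~\ref{compactness-theorem} already guarantee that the limit surface together with the limiting curve is a smooth manifold-with-boundary (hence nonempty, with the full curve as boundary) and that the convergence is smooth up to it; the mod~$2$ intersection count in its proof (odd $m$ forces $\nu=1$, so the limit meets every transversal joining the two sides of the limit curve) is precisely what excludes the collapse you worry about, which can happen only for even $m$.

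Moreover, the patch you add is not correct as written. First, the monotonicity corollary (which in the paper precedes, rather than follows, Theorem~\ref{th:E}) requires $K_1$ to lie in the \emph{interior} of $K_2$ as closed regions of $\partial\h^3$; here the closed strip $K_\epsilon\cup\{\infty\}$ and the skillet region $\mathcal{A}\cup\{\infty\}$ both contain $\infty$, and $\infty$ is a boundary point of $\mathcal{A}\cup\{\infty\}$, so the hypothesis fails at exactly the delicate point and the inclusion of enclosed regions would need a separate justification. Second, the inference that ``$M_n\to\emptyset$ on compact sets forces the enclosed regions to collapse onto $\partial\h^3$'' ignores the other alternative: a fixed compact ball disjoint from $M_n$ could just as well be contained in $E(M_n,\mathcal{A}-(0,y_n,0))$; ruling that out takes an extra argument (e.g.\ a path from the ball to an ideal point outside the translated skillet must cross $M_n$, and the convex hull property prevents those crossings from accumulating at such an ideal point). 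Neither defect sinks the claim, because the step they were meant to supply is already part of the Compactness Theorem, but as stated your non-emptiness argument has real gaps.
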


This claim follows immediately from the fact that 
the standard minimal strip and the totally geodesic plane $H$ are uniquely area minimizing
(by Theorem~\ref{th:strip} and by the convex hull property~\ref{convex-hull-proposition}.)

\begin{claim}\label{skillet-containment-claim}
The surface $M$ lies in the region $\{y\ge 0\}$.  Also, there is an $a>0$ such that
\[
  M\cap \{y>a\}
\]
lies in a cylinder $x^2+z^2\le r^2$.
Furthermore, as $\lambda\to 0$, the surface
\[
    \lambda(M\cap \{x^2+z^2\ge r^2\})
\]
converges smoothly on compact subsets of $\overline{\h^3}\setminus \{0\}$
to $\overline{H}$.
(Here $\lambda(S)$ denotes the result of dilating $S$ by $\lambda$ about the origin.)
\end{claim}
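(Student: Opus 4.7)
The proof splits into the three assertions and relies on the convex hull property (Proposition~\ref{convex-hull-proposition}), the asymptotics in Claim~\ref{skillet-asymptotics-claim}, and the Compactness Theorem~\ref{compactness-theorem}. For the first assertion, since $u\ge 0$, the ideal boundary $\partial\mathcal{A}$ lies in the closed halfplane $\{y\ge 0\}\cap\partial\h^3$. The totally geodesic plane $H=\{y=0,\,z>0\}$ separates $\h^3$ into $\{y>0\}$ and $\{y<0\}$, and $L(M)=\partial\mathcal{A}$ is contained in the closure of $\{y>0\}$, so Proposition~\ref{convex-hull-proposition} forces $M\subset\{y\ge 0\}$.

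The second assertion is an immediate consequence of Claim~\ref{skillet-asymptotics-claim}. The standard minimal strip is contained in some cylinder $\{x^2+z^2\le r_0^2\}$ (with $r_0$ determined by Theorem~\ref{th:strip}), so smooth convergence of $M-(0,y,0)$ to the strip (uniformly on compact subsets of $\overline{\h^3}$) implies that, for any $r>r_0$ and any fixed slab width $C>0$, the portion of $M$ lying in a slab $\{y_0\le y\le y_0+C\}$ is contained in $\{x^2+z^2\le r^2\}$ once $y_0$ is sufficiently large. Tiling $\{y>a\}$ by such slabs yields the cylindrical containment for all sufficiently large $a$.

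For the third assertion, I would apply the Compactness Theorem~\ref{compactness-theorem} to the area-minimizing surfaces $D_{\lambda_n}M$, with $\lambda_n\to 0$, on the open set $W=\overline{\h^3}\setminus\{0\}$. The key observation, established by direct inspection in coordinates, is that on $W$ the boundary $D_{\lambda_n}\partial\mathcal{A}$ converges to the $x$-axis with multiplicity~$1$ (from the two tails of $\partial\mathcal{A}$ lying outside the support of $u$) and to the positive $y$-axis with multiplicity~$2$ (from the two near-vertical asymptotes at $x=\pm\lambda_n$ where $u$ blows up). Applying Theorem~\ref{compactness-theorem} locally near each component of the limit boundary: case~(1) shows that any subsequential limit $M^*$ extends to a smooth manifold-with-boundary along the $x$-axis, while case~(2) forces $\overline{M^*}$ to be disjoint from the positive $y$-axis. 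Since $M^*\subset\{y\ge 0\}$ (inherited from (1)) and the relevant part of its limit set on $W$ reduces to the $x$-axis, the convex hull property forces $M^*\subset H$; being a smooth open subset of $H$ with ideal boundary the full $x$-axis, we conclude $M^*=H$.

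The main obstacle is precisely the multiplicity-$2$ convergence along the positive $y$-axis: $D_{\lambda_n}M$ by itself carries a collapsing sliver (the dilated pan) close to the positive $y$-axis, so its Hausdorff limit on $W$ strictly contains $\overline{H}$. The cylindrical restriction $\{x^2+z^2\ge r^2\}$ in the statement of~(3) is tailored exactly to excise this sliver: by assertion~(2), $M\cap\{x^2+z^2\ge r^2\}$ lies in $\{y\le a\}$ and so misses the pan, hence $\lambda_n(M\cap\{x^2+z^2\ge r^2\})=D_{\lambda_n}M\cap\{x^2+z^2\ge\lambda_n^2 r^2\}$, and for small $\lambda_n$ the excluded shrinking cylinder around the $y$-axis absorbs the sliver. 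Case~(3) of Theorem~\ref{compactness-theorem} (applicable near the $x$-axis since $m=1$ there) then upgrades the subsequential convergence to the smooth convergence on compact subsets of $\overline{\h^3}\setminus\{0\}$ asserted in the claim.
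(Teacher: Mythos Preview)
Your first assertion is exactly the paper's argument.

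There is a genuine gap in your second assertion. You claim that smooth convergence of $M-(0,y,0)$ to the strip ``uniformly on compact subsets of $\overline{\h^3}$'' forces $M\cap\{y_0\le y\le y_0+C\}$ into the cylinder for large $y_0$. But the convergence furnished by Claim~\ref{skillet-asymptotics-claim} is only on compact subsets of $\overline{\h^3}\setminus\{\infty\}$: the curves $\partial\mathcal{A}-(0,y_0,0)$ are not smooth at $\infty$ (four ends of $\partial\mathcal{A}$ accumulate there), so the Compactness Theorem gives nothing at that point. The slab $\{y_0\le y\le y_0+C\}$ is not compact once $\infty$ is removed, and nothing in your argument rules out points of $M$ in the slab with arbitrarily large $x^2+z^2$. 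The paper closes this gap differently: smooth convergence to the strip shows that \emph{one component} of $M\cap\{y>a\}$ lies within bounded Euclidean distance of the strip, hence in a cylinder; then any other component $\Sigma$ would be an open minimal surface with $L(\Sigma)$ contained in the totally geodesic plane $\{y=a,\,z\ge 0\}\cup\{\infty\}$ (its ideal boundary is captured by the strip-like component), and the convex hull property forces $\Sigma$ into that plane, contradicting $\Sigma\subset\{y>a\}$.

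For the third assertion your approach works but is more elaborate than the paper's and inherits an awkwardness at $\infty$. You analyze the full dilates $D_\lambda M$ and invoke the multiplicity-$2$ case of the Compactness Theorem along the positive $y$-axis, then excise the sliver afterward; but $\lambda\partial\mathcal{A}$ is singular at $\infty$, so applying Theorem~\ref{compactness-theorem} to $D_\lambda M$ on $W=\overline{\h^3}\setminus\{0\}$ does not literally fit its hypotheses there. The paper instead uses assertion~(2) \emph{first}: since $M\cap\{x^2+z^2\ge r^2\}\subset\{0\le y\le a\}$, the full (ideal plus interior) boundary of $M\cap\{x^2+z^2\ge r^2\}$ coincides with $\partial H$ outside a fixed Euclidean ball about the origin. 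Hence the boundaries of $\lambda(M\cap\{x^2+z^2\ge r^2\})$ converge to $\partial H$ smoothly with multiplicity $1$ everywhere on $\overline{\h^3}\setminus\{0\}$, including at $\infty$, and a single application of the Compactness Theorem together with the unique area-minimization of $H$ finishes. Restricting first and dilating second is what makes the paper's argument clean.
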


\begin{proof}
The first statement follows from the convex hull property (Proposition~\ref{convex-hull-proposition}).
To prove the second, note that (by Claim~\ref{skillet-asymptotics-claim}) 
we can choose $a>0$ so that one component 
of $M\cap \{y>a\}$ lies in a bounded Euclidean distance from the standard minimal strip
and hence lies in a cylinder $x^2+z^2\le r^2$.  If $M\cap\{y>a\}$ had another component $\Sigma$,
then $\Sigma$ would be an open minimal surface whose limit set $L(\Sigma)$
 lies in the totally geodesic plane
$\{ (x,a,z): z\ge 0\} \cup \{\infty\}$, contradicting the convex hull property.

We have shown that the boundary of $M\cap \{x^2+z^2\ge r^2\}$
coincides (except in a ball around $(0,0,0)$) with $\partial H$.
Thus the boundary of $\lambda(M\cap\{x^2+z^2\ge r^2\})$ converges to $\partial H$, and the convergence
smooth away from the origin.
The convergence statement of the claim now follows 
from the Compactness Theorem~\ref{compactness-theorem} 
and from the fact that $H$ is uniquely area minimizing.
\end{proof}

\begin{claim}\label{disjoint-dilations-claim}
Fix a point $p$ of the form $(0,y_p,0)$ with $y_p<0$, and for $\lambda>0$, let $M^\lambda$ be the result of dilating $M$ by 
$\lambda$ about the point $p$.   
Suppose $N$ is another area-minimizing surface such that $\partial N = \partial M$.  
Then  $M^\lambda$  is disjoint from $N$ for $\lambda\ne 1$.
\end{claim}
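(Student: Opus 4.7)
The plan is to build a smooth foliation $\{M^\lambda\}_{\lambda>0}$ of an appropriate open subset of $\h^3$ and then force its leaf-parameter function to be identically $1$ on $N$ by a maximum principle argument. Since dilation about a boundary point of the upper halfspace model is a hyperbolic isometry, each $M^\lambda:=g_\lambda(M)$ is area-minimizing with boundary $g_\lambda(\partial\mathcal{A})$. The radial graph property of $M$ asserted in Theorem~\ref{th:skillet} says that each Euclidean ray from $p$ meeting $H$ hits $M$ in exactly one point; these rays are preserved by $g_\lambda$, which merely slides the intersection point along them, so $\{M^\lambda\}_{\lambda>0}$ is a smooth foliation of the open region $\Omega:=\h^3\cap\{y>y_p\}$ and determines a smooth function $\phi:\Omega\to(0,\infty)$ with $\phi^{-1}(\lambda)=M^\lambda$. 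The proof of Claim~\ref{skillet-containment-claim} applies verbatim to $N$ (it uses only area-minimality and the convex hull property, Proposition~\ref{convex-hull-proposition}), so $N\subset\{y\ge 0\}\subset\Omega$. Hence $\psi:=\phi|_N$ is a smooth function, and the statement of the claim is equivalent to $\psi\equiv 1$, since then $N\subset M^1=M$ and disjointness of $N$ from $M^\lambda$ for $\lambda\ne 1$ follows at once from the foliation.

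Suppose for contradiction that $\sup_N\psi>1$. At every smooth point of $\partial N=\partial\mathcal{A}$, the foliation $\{\overline{M^\lambda}\}$ extends smoothly across $\partial\h^3$ and the unique leaf through the point is $\overline{M}$, so $\psi(q)\to 1$ whenever $q\in N$ approaches such a point. Granting for the moment that the supremum is attained at an interior point $q^*\in N$, set $\Lambda:=\psi(q^*)>1$. The condition $\nabla\psi(q^*)=0$ forces $T_{q^*}N=T_{q^*}M^\Lambda$, i.e., $N$ touches $M^\Lambda$ tangentially at $q^*$, with $N$ lying on the $\{\phi\le\Lambda\}$ side. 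The one-sided strong maximum principle for minimal surfaces yields local coincidence of $N$ and $M^\Lambda$ near $q^*$, and unique continuation for the minimal surface equation promotes this to equality of the component of $N$ through $q^*$ with $M^\Lambda$. This is a contradiction, because the boundary of that component would then have to equal $g_\Lambda(\partial\mathcal{A})$, which differs from $\partial\mathcal{A}$ when $\Lambda\ne 1$ (for example, $\partial\mathcal{A}$ has arcs along $\{y=0\}$ that $g_\Lambda$ moves strictly off $\{y=0\}$). The symmetric argument handles $\inf_N\psi<1$.

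The principal obstacle is justifying that $\sup_N\psi$ is actually attained at an interior point, i.e.\ that a maximizing sequence in $N$ cannot escape to the single singular point of $\partial\mathcal{A}$ at $\infty\in\partial\h^3$. To handle this I would repeat for $N$ the asymptotic analysis of Claim~\ref{skillet-asymptotics-claim}: since the standard minimal strip (Theorem~\ref{th:strip}) and the totally geodesic halfplane $H$ are uniquely area-minimizing, the convex hull property together with the compactness theorem forces any blowdown of $N$ at $\infty$ along the handle end to be the standard minimal strip and along the pan end to be $H$. Because these model surfaces are the same asymptotes as for $M$, the leaf parameter $\phi$ tends to $1$ along them, so $\psi\to 1$ along any sequence in $N$ escaping to infinity. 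This rules out supremum-escape at infinity and closes the argument.
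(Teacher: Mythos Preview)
Your argument hinges on the radial graph property of $M$ (each Euclidean ray from $p$ meets $M$ exactly once), which you invoke from the statement of Theorem~\ref{th:skillet} to build the foliation $\{M^\lambda\}$. But in the paper this claim sits \emph{inside} the proof of Theorem~\ref{th:skillet}, and the radial graph property is deduced \emph{from} the claim by setting $N=M$: once one knows $M^\lambda\cap M=\varnothing$ for all $\lambda\ne1$, every ray from $p$ can meet $M$ at most once, and that is exactly how $\Pi$ is shown to be a diffeomorphism. So your argument is circular. Without the radial graph property you do not know that the $M^\lambda$ are pairwise disjoint (that is precisely the $N=M$ case of the claim), so you have no foliation and no well-defined leaf-parameter function $\phi$ on which to run the maximum principle. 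Uniqueness of $M$ is likewise a consequence of this claim, not an input to it.

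The paper's route avoids any foliation. It works directly with the set $\Lambda=\{\lambda\in(0,1): M^\lambda\cap N\ne\varnothing\}$: transversality of minimal surfaces makes $\Lambda$ open; the asymptotics of Claims~\ref{skillet-asymptotics-claim} and~\ref{skillet-containment-claim} (applied to both $M$ and $N$, together with the strong local uniqueness of Theorem~\ref{strong-local-uniqueness-theorem}) confine any intersection point to a fixed compact region of $\h^3$, making $\Lambda$ relatively closed in $(0,1)$; and the degeneration of $M^\lambda$ as $\lambda\to0$ to the set $T=\{y=y_p,\ z\ge0\}\cup\{(0,y,0):y\ge y_p\}$, which is a positive distance from $N$, shows $\Lambda$ misses small $\lambda$. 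Hence $\Lambda=\varnothing$. Your asymptotic sketch for $\psi\to1$ near $\infty$ is morally the same compactness input the paper uses to prove closedness of $\Lambda$; if you drop the foliation step and recast the argument as this open--closed connectedness argument on $\Lambda$, you recover the paper's proof.
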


\begin{proof}
It suffices to prove the claim for $\lambda<1$, since the result for $\lambda>1$ follows by switching the roles
of $M$ and $N$.  

Since $N$ is properly embedded in $\h^3$, by elementary topology we can write $N$ as the boundary of an open region $U$
of $\h^3$.  We may assume that $(0,0,0)$ is not in $\overline{U}$.  (Otherwise replace $U$ by
the interior of $\h^3\setminus U$.)

Note that if $\lambda<1$ and if $M^\lambda$ intersects $N$, then there are points of $M^\lambda\cap N$
where the intersection is transverse, from which it follows that if we perturb $\lambda$ slightly,
$M^\lambda$ still intersects $N$.  Thus if $\Lambda$ is the set of  $\lambda\in (0,1)$ for
which $M^\lambda$ intersects $N$, then $\Lambda$ is open.

Note that there is an $R>0$ with the following property:
if $\lambda\in \Lambda$, then $M^\lambda\cap N$ contains points in the cylinder $C=\{(x,y,z): x^2+z^2\le R^2\}$.
To see this, first choose $R$ larger than the $r$ of claim~\ref{skillet-asymptotics-claim}, 
from which it follows
that  $\overline{N}\setminus C$ is a smooth manifold-with-boundary near $\infty$.   
Now choose $R$ even larger so that $N':=\overline{N}\setminus C$ has
the strong local uniqueness property described in Theorem~\ref{strong-local-uniqueness-theorem}.
  If $M^\lambda\cap N$ did not have any points in $C$,
then $L(M^\lambda\cap U)$ would be contained in $N'$, and therefore 
(by Theorem~\ref{strong-local-uniqueness-theorem}), $M\cap U$ would be contained in $N'$,
a contradiction.

We claim that $\Lambda$ is also relatively closed in $(0,1)$. 
For suppose $\lambda(i)\in \Lambda$ converges to $\lambda\in (0,1)$.
By the preceding paragraph, there exist points $(x(i),y(i),z(i))$ in $M^{\lambda(i)}\cap N$
with $x(i)^2+z(i)^2\le R^2$.
It follows from claim~\ref{skillet-asymptotics-claim} (applied to $M$ and to $N$) that
 $z(i)$ is bounded away from $0$.  (Note that $\partial M^\lambda$ and $\partial N$
 are a positive Euclidean distance apart.)
Also, $y(i)$ is bounded since (by claim~\ref{skillet-asymptotics-claim})
 $M$ and $N$ are both asymptotic to 
the standard minimal strip  as $y\to\infty$, and therefore that $M^\lambda$ and $N$ are 
a positive distance apart as $y\to\infty$.
Hence, after passing to a subesquence, $(x(i),y(i),z(i))$ converges to a point $p\in M^\lambda\cap N$, proving
that $M^\lambda\cap N$ is nonempty and thus that $\Lambda$ is relatively closed in $(0,1)$.

Since $\Lambda$ is an open and closed subset of $(0,1)$, 
either it is either empty or else it is all of $(0,1)$.
To see that it is empty, note that $M^\lambda$ is disjoint for $N$ for very small $\lambda$ since,
by claim~\ref{skillet-containment-claim},
\[
    \max_{q\in M^\lambda} \dist_{\RR^3} (q, T) \to 0 \quad \text{as $\lambda\to 0$}
\]
and
\[
   \min_{q\in N} \dist_{\RR^3}(q, T) > 0
\]
where $T$ is the union of $\{y=y_p, \, z\ge 0\}$ and $\{(0,y,0): y\ge y_p\}$.
This completes the proof of claim~\ref{disjoint-dilations-claim}.
\end{proof}

Now we can complete the proof of Theorem~\ref{th:skillet}.
It follows immediately from Claim~\ref{disjoint-dilations-claim} that $M$ is unique.

Let $H=\{(x,0,z): z>0\}$ and let  
\begin{align*}
&\Pi: M\to H \\
&\Pi(q) =  \overleftrightarrow{pq} \cap H.
\end{align*}
Applying claim~\ref{disjoint-dilations-claim} with $N=M$, we see that each straight (Euclidean) line
through $p=(0,y_p,0)$ intersects $M$ at most once.  Thus the map $\Pi$ is a
diffeomorphism from $M$ to an open subset of $\Omega$.
It follows from Claim~\ref{skillet-asymptotics-claim} that $\Pi$ is proper.  Hence $\Pi$ is a surjective diffeomorphism.

It follows that the Jacobi field on $M$ corresponding to dilations about $p=(0,y_p,0)$
is everywhere positive.  Letting $y_p\to -\infty$, we see that the Jacobi field on $M$
corresponding to horizontal translations $(x,y,z)\mapsto (x,y+t,z)$ is everywhere nonnegative.
By the strong maximum principle, if that Jacobi field vanished anywhere, it would vanish everywhere,
which implies that $M$ would be invariant under those translations.  But that is impossible
since $y\ge 0$ for $(x,y,z)\in M$.    Thus the Jacobi field is everywhere positive, which
implies that $\nu\cdot (0,1,0)$ is everywhere positive.  
\end{proof}

\begin{theorem}
A {minimal} skillet $M$ in $\h^3$ is strictly $L^\infty$ stable.
\end{theorem}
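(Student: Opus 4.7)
The plan is to verify the two conditions of Definition~\ref{def:linf}. Theorem~\ref{th:skillet} provides on $M$ an everywhere positive Jacobi field $w$ arising from the horizontal translations $(x,y,z)\mapsto(x,y+t,z)$, which are hyperbolic isometries; explicitly, in the upper half-space model, $w=\nu\cdot(0,1,0)/z$ where $\nu$ denotes the Euclidean unit normal. Applied to $w$, Lemma~\ref{standard} immediately gives strict positivity of the first Dirichlet eigenvalue of the Jacobi operator on every relatively compact subdomain of $M$.

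For the absence of bounded Jacobi fields, suppose for contradiction that $v$ is a bounded nonzero Jacobi field on $M$ with $\sup|v|=1$, and choose $p_n\in M$ with $|v(p_n)|\to 1$. I first claim that after passing to a subsequence $p_n$ converges to an interior point of $M$. The three possible escapes of $p_n$ in $\overline{\h^3}$ are each excluded by a blowup argument using the asymptotic structure of $M$ and the strict $L^\infty$ stability of the limit models:
\begin{enumerate}
\item[(i)] If $p_n\to q_0\in\partial M\setminus\{\infty\}$, rescale by hyperbolic isometries (Euclidean similarities centered at $q_0\in\partial\h^3$) sending $p_n$ to $(0,0,1)$. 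By Theorem~\ref{boundary-regularity-theorem} the rescaled surfaces converge smoothly on compact subsets of $\overline{\h^3}\setminus\{0\}$ to a totally geodesic halfspace, and Theorem~\ref{compactness-theorem} combined with standard elliptic regularity gives that the rescaled Jacobi fields subsequentially converge to a nonzero bounded Jacobi field on a totally geodesic plane, contradicting Corollary~\ref{co:plane}.
\item[(ii)] If $y(p_n)\to+\infty$, the hyperbolic $y$-translation $(x,y,z)\mapsto(x,y-y(p_n),z)$ carries $M$ to surfaces converging smoothly to the standard minimal strip by Claim~\ref{skillet-asymptotics-claim}, while the translated basepoints $(x(p_n),0,z(p_n))$ remain bounded (again by that claim). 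The limit Jacobi field contradicts Theorem~\ref{th:strip}.
\item[(iii)] If $|x(p_n)|\to\infty$, an $x$-translation (possibly followed by a hyperbolic dilation about the origin to normalize $z(p_n)$), combined with Claim~\ref{skillet-asymptotics-claim}, produces in the limit a bounded nonzero Jacobi field on the totally geodesic plane $H$, contradicting Corollary~\ref{co:plane}.
\end{enumerate}
Hence along a subsequence $p_n\to p_\infty\in M$ with $|v(p_\infty)|=1$; without loss of generality $v(p_\infty)=1$.

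Set $f:=v/w$. Since $Jv=Jw=0$ and $w>0$, a direct computation gives the homogeneous elliptic equation
\[
\Delta_M f+2\,\nabla_M(\log w)\cdot\nabla_M f=0,
\]
to which the strong maximum principle applies (no zeroth-order term). Applying the same blowup analysis to both $v$ and $w$ in parallel, together with standard Hopf boundary-point analysis near the smooth part of $\partial M$, one shows that $f$ is bounded on $M$ and that its positive supremum is attained at an interior point. Lemma~\ref{ratio-lemma} then forces $v\equiv\lambda w$ for some $\lambda>0$; but $w$ is unbounded on $M$---along the plane-like end, $M$ asymptotes to $H$, so $\nu\cdot(0,1,0)\to 1$ while $z$ takes values arbitrarily near $0$, forcing $w\to\infty$---contradicting $|v|\le 1$, and completing the proof.

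The main technical obstacle is establishing the boundedness and interior attainment of $\sup f$, particularly at the strip-like end where $w$ itself vanishes (the standard strip being $y$-translation invariant). Resolving this requires a paired blowup of $(v,w)$, controlled against the explicit positive dilation Jacobi field $w^*$ on the strip from the proof of Theorem~\ref{th:strip}; analogous care is needed on the handle portion of the smooth boundary where $\nu\cdot(0,1,0)$ also tends to zero.
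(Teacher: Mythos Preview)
Your approach shares the paper's overall strategy---produce a positive Jacobi field $w$, then show any bounded Jacobi field $v$ would force $v/w$ to attain an interior maximum and invoke Lemma~\ref{ratio-lemma}---but your choice of $w$ creates a gap you do not close. The translation field $w=\nu\cdot(0,1,0)/z$ tends to $0$ along the strip-like end (the standard strip is $y$-translation invariant, so the limit translation field vanishes identically), and hence $v/w$ is a priori unbounded there. You flag this yourself as ``the main technical obstacle'' and propose a paired blowup of $(v,w)$ against the strip's dilation field $w^*$, but this is only a sketch: you would need to show that, after translating in $y$, the rescaled $w$ converges to a nonzero multiple of $w^*$ (or some other positive Jacobi field going to infinity at $\partial$strip), which requires control of the \emph{rate} at which $M$ converges to the strip. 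That is not available from Claim~\ref{skillet-asymptotics-claim} alone. A similar degeneracy of your $w$ occurs along the ``handle'' portion of $\partial M$ where $\nu\cdot(0,1,0)\to 0$, and again your Hopf-lemma remark does not settle whether $v/w$ stays bounded there. Separately, your escape dichotomy (i)--(iii) omits the case $z(p_n)\to\infty$ with $x,y$ bounded (points going to $\infty$ along the plane-like end), though that case is easily repaired by dilation.

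The paper avoids the strip-end degeneracy by choosing instead the \emph{dilation} Jacobi field $w$ about the origin: on the strip-like end this converges to the strip's dilation field $w^*$, and the paper checks explicitly (its Claim~1) that $zw$ is bounded below, so $w$ stays positive there. The price is that the dilation field can tend to $0$ as $z\to\infty$ on the plane-like end; the paper handles this by producing a \emph{third} Jacobi field $f$ (via inversion) that is bounded below on $\{x^2+z^2>R^2\}$, and then runs a two-step comparison: first localize $(v-\mu w)^+$ to the region $\{z>R\}$ by choosing $\mu$ near $\sup(v/w)$, then apply Lemma~\ref{ratio-lemma} to $(v-\mu w)/f$ there. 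If you want to salvage your translation-field approach, you would need an analogous auxiliary field controlling the strip end---effectively rebuilding the paper's argument with the roles of the two ends swapped.
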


\begin{proof}
We will assume that the minimal skillet has been translated by $(x,y,z)\mapsto (x,y+1,z)$,
so that it 
lies in the region $\{(x,y,z): \text{$z>0$ and $y>1$}\}$ and is 
asymptotic as $x^2+z^2\to \infty$ to the halfplane
               $y=1, z\ge 0$.
As $y\to\infty$ with $x^2+z^2$ bounded, the minimal skillet $M$ is smoothly asymptotic to the standard minimal
strip.  (See Claim~\ref{skillet-asymptotics-claim}.)

Let $w$ be the Jacobi field on $M$ corresponding to dilations about $0$.
In other words, for $p\in M$, $w(p)$ is the (hyperbolic) length of $p^\perp$.
Then because $M$ is a radial graph about the origin (by~Theorem~\ref{th:skillet}),
$w>0$ everywhere, so compact subsets of $M$ are strictly stable.
Thus it suffices to show that $M$ has no nonzero, bounded Jacobi fields.

Suppose to the contrary that $v$ is a nonzero, bounded Jacobi field.

\begin{claim}\label{claim-1}   $z w(x,y,z)$ is bounded away from $0$.
\end{claim}

To prove the claim, note that $w(x,y,z)$ is the hyperbolic length of the vector $(x,y,z)^\perp$
at the point $(x,y,z)$, 
so $z w(x,y,z)$ is the Euclidean length $|(x,y,z)^\perp|$ of $(x,y,z)^\perp$.

As $x^2+z^2\to\infty$ in $M$, $\Tan_{(x,y,z)}M$ converges to the plane $y=0$,
so $(x,y,z)^\perp \sim (0,y,0)$.  Also, $y\ge 1$ on $M$, so
\[
  \liminf_{x^2+z^2\to\infty} z w(x,y,z) \ge 1.
\]

On sets where $x^2 + z^2$ and $y$ are both bounded, the euclidean length
of $(x,y,z)^\perp$ is bounded away from $0$ because $M$ is a radial graph.

Thus it remains to show that the Euclidean length $|(x,y,z)^\perp|$ is bounded 
as $y\to\infty$ with $x$ and $z$ bounded.  
But that holds because
that $M$ is asymptotic as $y\to\infty$ to the standard minimal strip 
(see Claim~\ref{skillet-asymptotics-claim}) and because
the corresponding Jacobi field $w^*$ on the standard minimal strip is bounded
away from $0$ (by~\eqref{eq:independent-of-y} and~\eqref{strip-boundary-blowup}).
This completes the proof of the Claim~\ref{claim-1}.

\begin{claim}\label{claim-2}
 If $p_n=(x_n,y_n,z_n)$ is a divergent sequence in $M$, then $v(p_n)\to 0$.
\end{claim}

\begin{proof}[Proof of Claim~\ref{claim-2}]
 By passing to a subsequence, we can assume that one of the following holds:

\begin{enumerate}
\item $(x_n)^2+ (z_n)^2 \to \infty$.
\item $(x_n)^2 + (z_n)^2$ is bounded and $z_n\to 0$.
\item $(x_n)^2 + (z_n)^2$ is bounded and $z_n$ is bounded away from $0$.
\end{enumerate}

Translate $M$ by $(-x_n,-y_n,0)$ and then dilate by $1/z_n$
to get a surface $M_n$.  Let $v_n$ be the Jacobi field on $M_n$ corresponding to 
   $v$ on $M$.
By passing to a subsequence, we can assume that $M_n$ converges smoothly
to a limit surface $\hat{M}$, and that $v_n$ converges to a bounded Jacobi field $\hat{v}$
on $\hat{M}$.  
In case (1), $\hat{M}$ is the vertical halfplane $\{y=0\}$ (by Claim~\ref{skillet-containment-claim}).
In case (2), $\hat{M}$ is also a vertical halfplane, 
since $\partial \hat{M}$ 
is a line in $\partial \h^3$.  In case (3), $\hat{M}$ is  
a minimal strip (see Definition~\ref{def:strip}) by Theorem~\ref{th:strip}.   
In all three cases, $\hat{M}$ is strictly stable.  Thus $\hat{v}=0$.
Since $\hat{v}(0,0,1)= \lim v_n(0,0,1) = \lim v(p_n)$, 
this completes the proof of Claim~\ref{claim-2}. 
\end{proof}

\begin{claim}\label{claim-3}
There exists a Jacobi field $f$ on $M$ and an $R>0$
such that 
\[
    \inf_{M\cap \{x^2 + z^2 > R^2\}} f > 0.
\]
\end{claim}

\begin{proof}[Proof of Claim~\ref{claim-3}]
Let $S=S_R$ be the surface obtained from
$\overline{M}\cap \{ x^2+ z^2 > R^2\}$ by inversion in the sphere $x^2+y^2+z^2=1$
 (where $\overline{M}$ denotes the
 closure of $M$ in~$\overline{\h^3}$).
Note that if $R$ is sufficiently large, then $S$ is a smooth manifold-with-boundary
 on which $y$ is a smooth, function of $x$ and $z$.
  Indeed, by choosing
$R>0$ sufficiently large, we can guarantee that the Euclidean unit normal
to $S$ is everywhere arbitrarily close to $(0,1,0)$.  Consequently, 
the Jacobi field corresponding to translations in the $y$-direction is bounded 
away from $0$ on $S$.
  Now let $f$ be the corresponding jacobi field on $M$.  This completes the proof 
  of Claim~\ref{claim-3}.
\end{proof}

Now let $\lambda=\sup(v/w)$.  
Since we are assuming that $v>0$ at some points, $\lambda>0$.
By Lemma~\ref{ratio-lemma}, the supremum is not attained at any point of $M$.
(Note that $v$ cannot be a multiple of $w$ since $v$ is bounded and $w$ is unbounded.)
Thus if $p_n=(x_n,y_n,z_n)$ is a sequence of points in $M$ with
$v(p_n)/w(p_n)\to \lambda$, then $p_n$ diverges in $M$.  
By 
Claim~\ref{claim-2},
 $v(p_n)\to 0$.  Since $\lambda>0$, this implies that $w(p_n)\to 0$,
and therefore by Claim~\ref{claim-1} that $z_n\to \infty$.

It follows that by choosing $\mu<\lambda$ sufficiently close to $\lambda$, we
can guarantee that 
\[
    (v - \mu w)^+
\]
is supported in $M\cap \{z>R\}$, where $R$ is as in Claim~\ref{claim-3}.   It follows (using
Claims~\ref{claim-2} and~\ref{claim-3}) that
\[
    (v-\mu w)^+ / f
\]
attains a positive maximum value $k$ at some point $p$.
Consequently, 
\[
   (v - \mu w)/f
\]
has a positive local maximum $k$ at $p$, 
so
\[
  v - \mu w - k f \equiv 0
\]
by Lemma~\ref{ratio-lemma}.
But that is impossible since $(v-\mu w)$ is negative at some points of 
  $M\cap \{z>R\}$ whereas $f>0$ everywhere on that set.
The contradiction proves that there is no such $v$, and therefore that
$M$ is strictly $L^\infty$ stable.
\end{proof}

%%%%%%%%%
\section{Bridge principle at infinity}\label{sec:bridge}

A key tool in the construction of our minimal embeddings with arbitrary topology
is a bridge principle at infinity for properly embedded area-minimizing surfaces
in $\h^3$. 

Let $M\subset \h^3$ be an smooth, properly embedded, open surface whose closure
$\overline{M}$ is a smooth manifold-with-boundary in $\overline{\h^3}$.
Let $\Gamma \subset \partial\h^3$ be a smooth embedded arc such that 
 $\overline{M} \cap \Gamma= \partial \Gamma$ and such that $\Gamma$ meets $\partial  M$ orthogonally at each of its ends points. 
 A {\bf bridge on $M$ along $\Gamma$} is the image $P$ of a homeomorphism
$$\phi: [0,1] \times [-1,1] \longrightarrow \partial\h^3$$
such that $\phi( \cdot,0)$ parametrizes $\Gamma$ and $\phi(t,s) \in \overline{M}$ if, and only if, $t=0$ or $t=1$.

By the (Euclidean) {\bf width} of $P$ we mean $$w(P)=\sup_{x \in P} \dist_{\rth}(x, \partial P).$$

For the following proposition, we shall consider the {\em half-space model} of $\h^3$. In this model, the homotheties centered at points $p \in \{z=0\}$ induce isometries of the hyperbolic space.

From now on, it will be convenient to generalize the notions of skillet and minimal skillet as follows:
the image of a skillet (or minimal skillet) under any isometry of $\h^3$ leaving $\infty$
fixed will also be called a skillet (or minimal skillet).

\begin{proposition} \label{pro:bridges}
Let $M$ and $\Gamma$ be as be as above (in the preceding four paragraphs). 
Then there exists a sequence of bridges $\{ P_n\}_{n \in \n}$ on $M$ along $\Gamma$ satisfying:
\begin{enumerate}[(a)]
\item The widths $w_i:=w(P_i)$ tend to $0$ as $i \to \infty;$
\item The symmetric difference $(\partial P_i ) \vartriangle \partial M$ is smooth, and if $x_i \in P_i$, then every sequence of $i$'s tending to $\infty$ has a subsequence $i(j)$ such that
\[
      \left(w_{i(j)}^{-1}\right)_ \# \left( (\partial P_{i(j)}) \vartriangle \partial M  - x_{i(j)}\right)
\]
converges smoothly on compact sets of $\overline{\h^3}\setminus\{\infty\}$ to either:
\begin{enumerate}[(1)]
\item two parallel straight lines, or
\item the boundary of a {\bf skillet}.
\end{enumerate}
\end{enumerate}
\end{proposition}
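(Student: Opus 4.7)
The plan is to construct the bridges $P_n$ explicitly as tubular neighborhoods of $\Gamma$ of Euclidean width roughly $\varepsilon_n \downarrow 0$, tapered near each endpoint of $\Gamma$ so as to meet $\partial M$ tangentially, and then to identify the rescaled limits by a case analysis. Working in the upper half-space model, introduce smooth tubular coordinates $(t,s)$ about $\Gamma \subset \partial \h^3$, with $t$ a Euclidean arclength parameter on $\Gamma$ (taking values in $[0,L]$) and $s$ the signed Euclidean perpendicular distance. The orthogonality hypothesis translates to the statement that, near $t=0$, the curve $\partial M$ is a graph $t = g_0(s)$ with $g_0(0) = g_0'(0) = 0$, and analogously near $t = L$. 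Fix a single smooth model profile and, for each $n$, define a half-width profile $h_n\colon \R \to [0, \infty)$ that equals $\varepsilon_n$ on the middle portion of $[0,L]$ and tapers smoothly and concavely ($h_n'' \le 0$) down to zero near each endpoint, in such a way that the two curves $s = \pm h_n(t)$ meet $\partial M$ tangentially at the attachment points. Define $P_n$ to be the closure of $\{(t,s) : |s| < h_n(t)\}$ with the piece lying on the far side of $\partial M$ removed, so that $P_n \cap \overline{M}$ consists only of the two short ends.

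With this construction statement (a) is immediate: the widest cross section of $P_n$ has half-width $\varepsilon_n$, and the tapered end-regions have length $O(\varepsilon_n)$ in the $t$-direction since $g_0'(0) = 0$ forces the attachment points to occur at $t = g_0(\pm \varepsilon_n) = O(\varepsilon_n^2)$. Hence $w_n \le C\varepsilon_n \to 0$.

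For statement (b), given a sequence $x_i \in P_i$, pass to a subsequence along which $d_i := \dist_{\R^2}(x_i, \partial \Gamma)$ satisfies either $d_i/w_i \to \infty$ or $d_i/w_i$ remains bounded. In the first regime, a ball of radius $O(w_i)$ about $x_i$ lies in the middle portion of $P_i$ where it is a straight strip of width $2\varepsilon_i$, so after rescaling by $w_i^{-1}$ the set $(\partial P_i) \vartriangle \partial M$ converges smoothly to two parallel straight lines. In the second regime, after a further subsequence assume $x_i$ is near $t = 0$. Rescale by $w_i^{-1}$ about $x_i$: the graph $\partial M = \{t = g_0(s)\}$ blows up to its tangent line at the origin, namely $\{t = 0\}$, by orthogonality; and the two long sides $\{s = \pm h_i(t)\}$ blow up (after relabeling so the handle direction is vertical) to the graph of a continuous, compactly supported function $u\colon \R \to [0,\infty]$ which equals $+\infty$ on an interval $|x|\le 1$ (the handle, which corresponds to the interior of the bridge, now receding to infinity in the rescaled picture) and which is convex elsewhere (reflecting the concavity of the taper). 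This is precisely the boundary of a skillet in the sense of the earlier definition.

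The main obstacle is engineering the taper profile so that three conditions hold simultaneously: smoothness of the symmetric difference for every $n$ (a tangency requirement at the attachment points), the convexity $u'' \ge 0$ in the rescaled limit (via the concavity $h_n'' \le 0$ of the taper), and smooth convergence of the rescalings uniformly in the choice of $x_i$ (achieved by defining each $h_n$ as a rescaled copy of a single fixed model profile, so that the blow-up always sees the same universal picture). A secondary point is verifying that the two regimes above exhaust all possible subsequential behaviors of $d_i/w_i$, and that intermediate scalings (where $d_i/w_i$ remains bounded but grows) still produce skillets; this is automatic because the skillet definition accommodates any compactly supported convex profile and so is robust under such sub-regime variations.
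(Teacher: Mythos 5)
The paper omits this proof as routine, so the only question is whether your construction actually delivers (a) and (b), and there is a genuine geometric error in it: your taper goes the wrong way. For the blow-ups at the attachments to be skillet boundaries, the bridge must \emph{widen} near its two ends: the attachment arcs on $\partial M$ must be \emph{longer} than the width of the middle of the strip, and the side curves must flare outward, approaching the two lines parallel to $\Gamma$ from \emph{outside} the strip, so that the rescaled limit is the graph of a single convex function $u$, finite exactly for $|x|>1$, as the skillet definition requires. Your profile $h_n$ instead pinches inward ($h_n$ concave, decreasing to $0$ near the ends), so cutting along $\partial M$ produces attachment arcs \emph{narrower} than the strip. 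Then at each attachment point the side curve and the remaining branch of $\partial M$ both emanate into the same half-space (the side curve leaves the junction on the bridge side, tangent or transverse to $\partial M$), so $(\partial P_i)\vartriangle\partial M$ has a corner or a tangential double point there and is not a smooth embedded curve; and even ignoring that, the rescaled limit near an end is the tangent line of $\partial M$ (minus a short gap) together with two curves rising on the \emph{inside} of the lines $x=\pm 1$, over the same range of $x$ as the line, which is not the graph of any $u$ with $u(x)=\infty$ iff $|x|\le 1$, i.e.\ not the boundary of a skillet. Your width estimate (``the attachment points occur at $t=g_0(\pm\varepsilon_n)$'') tacitly assumes the sides stay at distance $\varepsilon_n$ from $\Gamma$ all the way to $\partial M$, which by the orthogonality hypothesis would give right-angle corners; this is inconsistent with the tangency you impose two sentences earlier and is a symptom of the same confusion about which way the transition bends.

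Two further points. First, ``tangentially'' is too weak: to make $(\partial P_i)\vartriangle\partial M$ of class $C^\infty$ you need infinite-order contact. The clean fix is to choose one even, smooth, convex model profile $u_0$ with $u_0(x)=\infty$ iff $|x|\le 1$, $u_0\equiv 0$ for $|x|\ge 2$, $u_0''\ge 0$, and $u_0$ vanishing to infinite order at $|x|=2$, and to define the new boundary near each end, in your tubular coordinates, as the graph $t=g_0(s)+\varepsilon_n u_0(s/\varepsilon_n)$ (height over $\partial M$ as a function of signed distance from $\Gamma$), spliced at an intermediate scale such as $\sqrt{\varepsilon_n}$ into side curves at distance about $\varepsilon_n$ from $\Gamma$ along the middle of the bridge. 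This is manifestly a smooth graph near each end, and since $g_0(s)=O(s^2)$, rescaling by $w_i^{-1}\approx\varepsilon_i^{-1}$ about a point within $O(w_i)$ of an endpoint gives exactly the graph of $u_0$ in the limit, i.e.\ a skillet boundary up to a bounded translation and dilation, which is permitted because the paper has just enlarged the class of skillets by isometries of $\h^3$ fixing $\infty$. With that corrected, self-similar, outward-flaring model your dichotomy on $d_i/w_i$ finishes the proof: if $d_i/w_i\to\infty$ the bounded curvature of $\Gamma$ and $\partial M$ dies under rescaling and the limit is two parallel lines; if $d_i/w_i$ stays bounded one gets the skillet boundary. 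Your closing remark about ``intermediate scalings'' is vacuous, since after passing to a subsequence $d_i/w_i$ is either bounded or tends to infinity, and those are exactly the two cases already treated.
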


Recall that $A \vartriangle B := (A \setminus B) \cup (B \setminus A)$.

The proof of Proposition~\ref{pro:bridges} is straightforward so we omit it.
A sequence of bridges $P_i$ that satisfies the conclusions of Proposition~\ref{pro:bridges} is said
to {\em shrink nicely} to $\Gamma$. 

\begin{theorem}[Bridge Theorem] \label{th:bridge}
Let $S\subset\h^3$ be an open, properly embedded, uniquely area-minimizing surface whose
closure $\overline{S}\subset \overline{\h^3}$ is a smooth, embedded manifold-with-boundary.
 Let $\Gamma$ be a smooth arc in $\partial\h^3$ meeting
 $\partial S$ orthogonally and satisfying $\Gamma \cap \partial S=\partial \Gamma$. 
  Consider a sequence of bridges $P_n$ 
 in $\partial\h^3$ that shrink nicely to $\Gamma$. If $S$ is {\bf strictly $L^\infty$ stable}, then for all large enough $n$, there exists a strictly $L^\infty$ stable,  uniquely area-minimizing surface $S_n$ {that} is properly embedded in $\h^3$ and {that satisfies}:
 \begin{enumerate}
 \item\label{boundary-specification-item}
  $\partial S_n= \partial S \vartriangle \partial P_n$ (in particular, 
 $\overline{S_n}$ is  smooth, embedded manifold-with-boundary in $\overline{\h^3}$);
  \item\label{away-from-Gamma-item}
   The sequence $\overline{S_n}$ converges smoothly to $\overline{S}$ on 
                   compact subsets of $\overline{\h^3}\setminus\Gamma$;
  \item\label{homeomorphic-item} The surface $\overline{S_n}$ is homeomorphic to $\overline{S} \cup P_n$
  \end{enumerate} 
\end{theorem}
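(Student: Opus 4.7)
The plan is to construct the $S_n$ directly via the Basic Existence Theorem~\ref{th:principal} and then establish its properties through two complementary analyses: global compactness away from $\Gamma$ and rescaled blowup near $\Gamma$. Since $\overline{S}$ is a smooth manifold-with-boundary in $\overline{\h^3}$, the curve $\partial S$ bounds a closed region $K \subset \partial\h^3$ with smooth boundary. The bridge $P_n$ meets $\partial K$ in exactly two short arcs and otherwise stays on one side of $\partial K$ (since $\Gamma$ meets $\partial S$ only at its endpoints), so $K_n := K \vartriangle P_n$ is a closed region with piecewise-smooth boundary $\partial K_n = \partial S \vartriangle \partial P_n$. Theorem~\ref{th:principal} yields an area-minimizing surface $S_n$ with $\partial S_n = \partial K_n$, giving property (\ref{boundary-specification-item}). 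For property (\ref{away-from-Gamma-item}), a subsequence of $S_n$ converges smoothly with multiplicity one on compact subsets of $\h^3$ to some area-minimizing $S^\ast$ by Theorem~\ref{compactness-theorem}; since $\partial S_n \to \partial S$ smoothly and with multiplicity one away from $\Gamma$, part~(3) of that theorem upgrades the convergence to smooth convergence of $\overline{S_n}$ on compact subsets of $\overline{\h^3}\setminus\Gamma$. Unique area-minimization of $S$ then forces $S^\ast = S$, so the full sequence converges.

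The heart of the proof is the behavior near $\Gamma$, which is needed for property (\ref{homeomorphic-item}) and is the main obstacle. Pick a point $x_n \in P_n$ and consider the rescaling $R_n(q) = w_n^{-1}(q - x_n)$ where $w_n = w(P_n)$; in the upper half-space model this is a hyperbolic isometry, being a horizontal translation followed by a dilation about a boundary point. By the ``shrinks nicely'' hypothesis, the rescaled boundaries $R_n(\partial S_n)$ converge smoothly on compact subsets of $\overline{\h^3}\setminus\{\infty\}$ to either two parallel lines or the boundary of a skillet. The rescaled surfaces $R_n(S_n)$ are again area-minimizing in $\h^3$, and by Theorem~\ref{compactness-theorem} they subsequentially converge to an area-minimizing surface with this limiting boundary. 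The uniqueness of the standard minimal strip (Theorem~\ref{th:strip}) and of minimal skillets (Theorem~\ref{th:skillet}) identifies this limit as the standard minimal strip or a minimal skillet respectively, and convergence is smooth up to the boundary by Theorem~\ref{compactness-theorem}(3). Combined with the smooth convergence of $\overline{S_n}$ to $\overline{S}$ away from $\Gamma$, this gives enough geometric control to see that $\overline{S_n}$ is obtained from $\overline{S}$ by gluing in a thin topological strip along the bridge attachment region; hence $\overline{S_n}$ is homeomorphic to $\overline{S}\cup P_n$.

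It remains to establish strict $L^\infty$ stability and unique area-minimization. For stability we argue by contradiction: if infinitely many $S_n$ carried bounded nonzero Jacobi fields $v_n$ normalized by $\sup|v_n|=1$, then choosing points $p_n \in S_n$ where the supremum is nearly attained and performing a suitable blowup about $p_n$ (either no rescaling when $p_n$ stays in a compact region of $\h^3$, or the rescaling $R_n$ when $p_n$ approaches $\Gamma$, or an appropriate further rescaling of these when additional blowup is needed) would produce a limiting surface that is $S$, a totally geodesic plane, a standard minimal strip, or a minimal skillet, carrying a bounded nonzero Jacobi field as a subsequential limit of the $v_n$. Each option contradicts a strict $L^\infty$ stability statement already proved: the hypothesis on $S$, Corollary~\ref{co:plane}, Theorem~\ref{th:strip}, and the skillet stability theorem respectively. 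Unique area-minimization of $S_n$ then follows as in Claim~\ref{disjoint-dilations-claim} of the skillet proof: a positive Jacobi field on $S_n$ that blows up along $\partial S_n$ (assembled from the positive Jacobi fields on $S$ and on the strip/skillet blowup model) generates a one-parameter foliation near $S_n$, whose leaves separate $S_n$ from any competing area-minimizing surface with the same boundary via the maximum principle.
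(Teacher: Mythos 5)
Your setup (existence of $S_n$ via Theorem~\ref{th:principal}, convergence away from $\Gamma$ via Theorem~\ref{compactness-theorem} plus unique minimality of $S$) and your stability argument by blowup (limits being $S$, a plane, a strip, or a skillet, each strictly $L^\infty$ stable) follow the paper's route. But there are two genuine gaps. The first is the homeomorphism statement~(3), which is exactly the hard point of any bridge principle and which you dispose of with ``this gives enough geometric control to see that $\overline{S_n}$ is obtained from $\overline{S}$ by gluing in a thin topological strip.'' Smooth convergence on compact subsets of $\overline{\h^3}\setminus\Gamma$, together with a blowup at a single sequence of points $x_n\in P_n$ at the single scale $w_n$, does not exclude extra topology (small handles or necks) concentrating near $\partial S_n$ at other points or at smaller scales; nothing in your argument rules this out. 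The paper needs a genuinely global mechanism here: Claim~\ref{cl:a} shows that in a slab $\mathcal{R}_a=\{0\le z\le a\}$ the surface $S_n$ has no point where $(0,0,1)$ is a normal pointing into the component $\mathcal{Q}_n$ of the complement containing $\Gamma$ (proved by blowing up at an \emph{arbitrary} sequence of low points, with the strip case ruled out by the direction of the normal), and then Claim~\ref{cl:diffeo} uses the height function as a Morse function to show that any excess genus inside $\mathcal{R}_a$ would force precisely such a critical point. Some argument of this kind (quantified over all points near $\partial\h^3$, not just points on the bridge) is indispensable.

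The second gap is uniqueness. Your proposed mechanism---a positive Jacobi field on $S_n$ blowing up along $\partial S_n$, ``assembled'' from the fields on $S$ and on the strip/skillet models, generating a minimal foliation that sweeps past any competitor---is not justified: Jacobi fields on different model surfaces cannot be glued into a Jacobi field on $S_n$; strict $L^\infty$ stability of $S$ gives no such positive field on $S$ (it only gives positivity of the first eigenvalue on compacta and absence of bounded Jacobi fields); and even granting the field, producing an actual foliation by minimal leaves and running the maximum principle against a competitor sharing the same ideal boundary requires substantial additional work. The paper's Claim~\ref{cl:uniqueness} argues differently: if $T_n\ne S_n$ with $\partial T_n=\partial S_n$, pick $p_n\in S_n$ maximizing the hyperbolic distance to $T_n$ (the maximum exists because both surfaces meet $\partial\h^3$ orthogonally along the same curve, so the distance decays at infinity); if $p_n$ accumulates in $S$, the graphical comparison of $S_n$ and $T_n$ over $S$ yields a nonzero bounded Jacobi field on $S$ attaining its maximum, contradicting strict $L^\infty$ stability of $S$; otherwise one rescales by $1/z_n$ and uses the second half of Claim~\ref{possible-limits-claim}---that \emph{any} competing minimizer with the same boundary blows up to the \emph{same} limit model $S'$---to get a nonzero bounded Jacobi field on the strictly $L^\infty$ stable $S'$, again a contradiction. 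That ``same limit for competitors'' statement, which your blowup claim does not include, is what makes the uniqueness argument close; you would need it (or a genuine substitute) to repair this part.
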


Such a sequence of bridges exists by Proposition~\ref{pro:bridges}. 

\begin{proof}
By Theorem~\ref{th:principal}, there is an area-minimizing surface $S_n$ satisfying~\ref{boundary-specification-item}.
By the Compactness Theorem~\ref{compactness-theorem}, every subsequence of $S_n$
has a further subsequence such that $\overline{S_n}$ converges smoothly on
compact subsets of $\overline{\h^3}\setminus \Gamma$ to $\overline{Q}$, where $Q$
is an area-minimizing surface with boundary $\partial S$.  Since $S$ is uniquely area-minimizing,
in fact $Q=S$, which proves~\ref{away-from-Gamma-item}.

The key to proving the rest of the Bridge Theorem is the following:

\begin{claim}\label{possible-limits-claim}
Let $(x_n,y_n,z_n)$ be a sequence of points in with $z_n>0$ and $z_n\to 0$.
 Translate $S_n$ by $-(x_n,y_n,0)$ and then dilate by $1/z_n$
to get a surface
\[
  S_n' := (S_n - (x_n,y_n,0))/ z_n.
\]
Then a subsequence of the  $S_n'$ converges smoothly on compact subset of $\h^3$
 to one of the following
surfaces $S'$:
a vertical half-plane,  
a minimal skillet, a minimal strip (see Definition~\ref{def:strip}), or the empty set.
In particular, $S'$ is uniquely area-minimizing and strictly $L^\infty$ stable.

Furthermore, if $T_n$ is another area-minimizing surface with $\partial T_n=\partial S_n$,
and if $T_n' = (T_n - (x_n,y_n,0))/z_n$, then the corresponding subsequence
of the $T_n'$ converges to the same limit surface $S'$.
\end{claim}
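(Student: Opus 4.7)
The plan is to apply the Compactness Theorem~\ref{compactness-theorem} to the rescaled surfaces $S_n'$, and then pin down the possible limits by carefully tracking how the boundary $(\partial S_n - (x_n,y_n,0))/z_n$ behaves. Since horizontal Euclidean translations of $\partial\h^3$ and Euclidean dilations about points of $\partial\h^3$ are both hyperbolic isometries in the upper halfspace model, the map $S_n \mapsto S_n'$ is a hyperbolic isometry, so each $S_n'$ is properly embedded and area-minimizing. By the interior part of Theorem~\ref{compactness-theorem}, a subsequence then converges smoothly with multiplicity one on compact subsets of $\h^3$ to an area-minimizing surface $S'$ (possibly empty).

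To identify $S'$, the key parameter will be the ratio $r_n := w_n/z_n$, where $w_n := w(P_n) \to 0$. Recalling that $\partial S_n = \partial S \vartriangle \partial P_n$, pass to a further subsequence so that $r_n$ converges in $[0,\infty]$. In the regime $r_n \to 0$, the bridge is negligible at scale $z_n$, so the rescaled boundary collapses to a blow-up of $\partial S$: either it is empty (if $(x_n,y_n,0)$ stays far from $\partial S$ on scale $z_n$) or it is a single straight line (if it approaches $\partial S$). In the regime $r_n \to c \in (0,\infty)$, the rescaling by $z_n^{-1}$ differs from the rescaling by $w_n^{-1}$ only by a bounded factor, so Proposition~\ref{pro:bridges} gives a limiting boundary that is either two parallel lines or the boundary of a skillet (possibly dilated by $c$). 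In the regime $r_n \to \infty$, the bridge is huge at scale $z_n$: either $(x_n,y_n,0)$ stays within bounded $z_n$-distance of one edge of $P_n$ and the rescaled boundary is a single line, or it stays far from both edges and the rescaled boundary is empty.

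In each case uniqueness determines $S'$: the convex hull property (Proposition~\ref{convex-hull-proposition}) forces $S'$ to be empty when the boundary limit is; a single line forces $S'$ to be a vertical halfplane (the unique area-minimizing surface with that asymptotic boundary, by Theorem~\ref{boundary-regularity-theorem} together with the convex hull property); two parallel lines force a minimal strip (Theorem~\ref{th:strip}); and a skillet boundary forces a minimal skillet (Theorem~\ref{th:skillet}). Each of these possibilities is uniquely area-minimizing and strictly $L^\infty$ stable, by Corollary~\ref{co:plane}, Theorem~\ref{th:strip}, or the theorem immediately preceding Claim~\ref{possible-limits-claim}. For the \emph{furthermore} assertion, the equality $\partial T_n = \partial S_n$ gives $\partial T_n' = \partial S_n'$; applying the same case analysis to $T_n'$ produces a subsequential limit $T'$ with the same asymptotic boundary as $S'$, and uniqueness then yields $T'=S'$.

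The main obstacle will lie in the $r_n \to \infty$ regime with empty boundary limit: I have to rule out that, even though $S_n$ covers a long bridge, the rescaled $S_n'$ retains a ``ghost'' interior sheet in the limit. Ruling this out will require combining the convex hull property with the boundary-convergence statement of Theorem~\ref{compactness-theorem}, carefully tracking the convergence up to $\partial\h^3$ away from the blow-up locus. A secondary point is verifying that rescalings of minimal skillets are still minimal skillets, which follows from the convention (stated just before Proposition~\ref{pro:bridges}) that skillets are taken up to isometries of $\h^3$ fixing $\infty$.
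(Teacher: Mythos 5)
Your overall strategy is the same as the paper's (blow up by a hyperbolic isometry, invoke the interior part of Theorem~\ref{compactness-theorem}, classify the limits of the rescaled boundary curves, then identify $S'$ by uniqueness, and get the ``furthermore'' from $\partial T_n'=\partial S_n'$ plus uniqueness), but your classification of the boundary blow-ups has a genuine gap in the regime $r_n=w(P_n)/z_n\to 0$. There the bridge is \emph{not} invisible in the rescaled boundary: the two long sides of $\partial P_n$ are at rescaled distance $r_n\to 0$ from each other, so if the base point $(x_n,y_n,0)$ stays within distance $O(z_n)$ of the bridge, the curves $\partial S_n'$ converge either to a straight line with multiplicity $2$ (base point near an interior point of $\Gamma$) or to a $T$-shaped configuration, namely the blow-up of $\partial S$ with multiplicity $1$ together with a perpendicular half-line of multiplicity $2$ (base point near an endpoint of $\Gamma$). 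In the first case your scheme records the boundary limit as ``empty'' and in the second as ``a single straight line,'' but in both cases boundary curves of $S_n'$ pass through every compact set, so neither the convex hull argument nor the bare uniqueness-of-the-half-plane argument applies as you state it: a priori the limit $S'$ could pick up boundary along the collapsed bridge (e.g.\ a half-plane over the doubled line, or a surface with boundary along the whole $T$). What rules this out is precisely the parity clause of Theorem~\ref{compactness-theorem} (items~(1)--(2)): the boundary of $S'$ contains the multiplicity-$1$ portion of the limit configuration and $\overline{S'}$ is disjoint from the multiplicity-$2$ portion. This is how the paper argues: it lists six possible limit configurations of $\partial S_n'$ \emph{with multiplicities} (a line; the $T$-shape; a skillet boundary; two parallel lines; a doubled line; the empty set), reads off $\partial S'$ as the closure of the multiplicity-$1$ part, and only then applies uniqueness (half-plane, strip, skillet) or, when $\partial S'=\varnothing$, the convex hull property.

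Note also that the ``ghost sheet'' you flag in the $r_n\to\infty$ regime is the easier point: there the boundary curves really do leave every compact set, and the sheet of $S_n'$ over the bridge sits at rescaled height that tends to infinity, so the paper's case of empty limit configuration plus the convex hull property disposes of it; the indispensable extra ingredient is the multiplicity/parity argument in the $r_n\to 0$ regime near the bridge, which your proposal omits. With that correction (and spelling out, in the intermediate regime $r_n\to c\in(0,\infty)$, the subcases where the base point is far from the bridge, which again give a single line or the empty set), your argument coincides with the paper's proof.
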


\begin{proof}[Proof of Claim~\ref{possible-limits-claim}]
By the definition of nicely shrinking, after passing to a subsequence, the curves $\partial S_n'$
converge to a limit $C'$, where
 $C'$ is one of the following configurations together with the point at infinity:
\begin{enumerate}
\item\label{line-item} a straight line with multiplicity $1$.
\item\label{T-item} a $T$-shaped configuration consisting of a straight line with multiplicity $1$ together with a   perpendicular half-line with multiplicity $2$.
\item\label{skillet-item} the boundary of a skillet with multiplicity $1$. 
\item\label{parallel-item} two parallel lines, each with multiplicity $1$.
\item\label{double-line-item} a straight line with multiplicity $2$.
\item\label{emptyset-item} the empty set.
\end{enumerate}
The convergence is smooth except at $\infty$ and, in case~\ref{T-item}, at the vertex of the $T$.

By the Compactness Theorem~\ref{compactness-theorem}, 
the $S_n'$ converge smoothly
(after passing to a subsequence)
to a limit surface $S'$ whose boundary is a closed subset of $C'$ that contains the multiplicity $1$
portion of $C'$ but none of the multiplicity $2$ portion.  
Thus (in all these cases) $\partial S'$ is the closure
of the multiplicity $1$ portion of $C'$.
In particular,  $\partial S'$ is a straight line in cases~\ref{line-item} and~\ref{T-item}, 
a skillet boundary in case~\ref{skillet-item}
 and a pair of parallel lines in case~\ref{parallel-item}.  
It follows that $S'$ is a vertical halfplane, a minimal skillet,
or a minimal strip since each of those surfaces is uniquely area-minimizing.  

In cases~\ref{double-line-item} and~\ref{emptyset-item}, $\partial S'$ is the empty set, which implies (by the convex hull property)
that $S'$ is empty.

By passing to a further subsequence, we can assume that the $T_n'$ converge smoothly
on compact subsets of $\h^3$ to a surface $T'$ whose boundary is (as proved above)
the closure of the multiplicity $1$ portion of $C'$.  In other words, $T'$ and $S'$ have the same
boundary.  Since in each of the cases above, $S'$ is uniquely area-minimizing,
it follows that $T'=S'$.
\end{proof}

\newcommand{\uu}{\mathbf{u}}
Next we shall prove that $\overline{S_n}$ and $\overline{S} \cup P_n$ are homeomorphic. 
{The surface $\overline S_n$ separates $\overline  \h^3$ into two connected components, 
one of which contains the curve $\Gamma$ which we denote by $\mathcal{Q}_n$.}

For $a>0$, we define $\mathcal{R}_a:= \{ (x,y,z) \in \overline{\h^3} \; : \; 0 \leq z \leq a\}$. 

\begin{claim} \label{cl:a}
{There exists $a>0$ such that
$\overline{S_n} \cap \mathcal{R}_a$ does not contain any point at which 
the vector $\uu:=(0,0,1)$ is 
a normal vector to $\overline{S_n}$ that points into $\mathcal{Q}_n$.}
\end{claim}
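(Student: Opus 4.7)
The plan is to argue by contradiction, using Claim~\ref{possible-limits-claim} as a blow-up tool. If no uniform $a>0$ works, then after passing to a subsequence there are indices $n_k$ and points $p_k=(x_k,y_k,z_k)\in\overline{S_{n_k}}$ with $z_k\to 0$ at which $\mathbf{u}=(0,0,1)$ is a normal to $\overline{S_{n_k}}$ pointing into $\mathcal{Q}_{n_k}$. The case $z_k=0$ is immediately ruled out: then $p_k\in\partial S_{n_k}\subset\partial\h^3$, and (as recalled in the proof of Theorem~\ref{strong-local-uniqueness-theorem}) the manifold-with-boundary $\overline{S_{n_k}}$ meets $\partial\h^3$ orthogonally there, forcing its tangent plane to be vertical and its normal to be horizontal. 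So I may assume $z_k>0$.

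Next I would apply the rescaling of Claim~\ref{possible-limits-claim}: set $S_{n_k}':=(S_{n_k}-(x_k,y_k,0))/z_k$. After a further subsequence, $S_{n_k}'$ converges smoothly on compact subsets of $\h^3$ to a limit $S'$ that is a vertical halfplane, a minimal skillet, a minimal strip, or the empty set. Since $p_k$ is sent to $(0,0,1)\in S_{n_k}'$, the limit $S'$ contains $(0,0,1)$, ruling out emptiness. Because translation and Euclidean dilation preserve normal directions, the normal to $S_{n_k}'$ at $(0,0,1)$ is again $\mathbf{u}$; smooth convergence then transfers this to $S'$. One similarly checks that $\mathbf{u}$ still points into the limit component $\mathcal{Q}'$ of $\overline{\h^3}\setminus\overline{S'}$, namely the one containing the rescaled limit of $\Gamma\subset\partial\h^3$.

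It remains to eliminate each of the three nonempty possibilities. A vertical halfplane has horizontal normal everywhere, which is incompatible with $\mathbf{u}$. A minimal skillet, by Theorem~\ref{th:skillet}, has a normal $\nu$ with $\nu\cdot(0,1,0)>0$, so $\nu\neq\pm\mathbf{u}$. The minimal strip is the delicate case: its profile $u$ is even with $u'(0)=0$, so $\mathbf{u}$ \emph{is} a normal along the ridge $\{x=0\}$. However, the strip arches over the bridge in $\partial\h^3$; thus $\mathcal{Q}'$ is precisely the region enclosed between the strip (roof) and the bridge strip in $\partial\h^3$ (floor), so at the ridge the upward vector $\mathbf{u}$ points \emph{away from} $\mathcal{Q}'$, not into it. This contradiction closes the argument.

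The main obstacle is exactly this minimal strip case, since there the vertical-normal condition is genuinely attained in the limit and cannot be ruled out purely from the geometry of $S'$. Identifying $\mathcal{Q}'$ correctly requires tracking how $\Gamma$ and the component $\mathcal{Q}_{n_k}$ behave through the translation-dilation, and only after that identification does the arch-and-enclosed-region picture produce the required contradiction.
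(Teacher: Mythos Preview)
Your proof is correct and follows essentially the same approach as the paper: contradict, rescale via Claim~\ref{possible-limits-claim}, rule out the empty set, the vertical halfplane, and the minimal skillet because $\mathbf{u}$ is not normal there, and in the minimal strip case track the region $\mathcal{Q}_n$ through the limit to see that $\mathbf{u}$ points the wrong way at the ridge. Your explicit treatment of the boundary case $z_k=0$ and your identification of $\mathcal{Q}'$ as the region under the arch (the side containing the rescaled limit of $\Gamma$) are exactly the points the paper uses, stated perhaps a bit more carefully.
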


(Thus $S_n\cap \mathcal{R}_a$ might have critical points of the height function $z$, but at such
critical points, the normal vector $(0,0,1)$ must point out of $\mathcal{Q}_n$, not into it.)

\begin{proof}
 We proceed by contradiction. Suppose this were not the case. Thus,  after passing to a subsequence, we can assume that there exists a critical point 
 $p_n=(x_n,y_n,z_n) \in S_n$ with $\uu$ pointing into $\mathcal{Q}_n$ at $p_n$ and with $z_n \to 0$.
 Up to a subsequence, we can suppose that $\{p_n\}$ converges to some point $p_0=(x_0,y_0,0)
 \in \partial\h^3$. 
 
 Then, we apply the isometry $(x,y,z) \mapsto 1/z_n \left( (x,y,z)-(x_0,y_0,0) \right) $  to $S_n$,  $p_n$,
 $Q_n$, and $\Gamma$ to obtain a new surface $S_n'$, a point $p_n' =(0,0,1)\in S_n'$,
 a region $Q_n'$, and a curve $\Gamma_n'$.
By Claim~\ref{possible-limits-claim}, we can assume (by passing to a subsequence)
that the surfaces $S_n'$ converge smoothly to one of the following surfaces $S'$: a
vertical halfplane, a minimal skillet, or a the standard area-minimizing strip bounded by two parallel lines.
In our case, $S'$ cannot be a vertical halfplane or  a minimal skillet, because those surfaces have no
points at which $\uu=(0,0,1)$ is a normal vector (see Theorem~\ref{th:skillet}),
 whereas $\uu$ is normal to $S'$ at the point $p'=(0,0,1)$.

Thus  $S'$ is a minimal strip.  Note that the curves $\Gamma_n'$ must converge
to the straight line $\Gamma'$ that is halfway between the two lines in $\partial S'$.  It follows that the regions
$\Omega_n'$ converge to the region $\Omega'$ that lies on the other side of $S'$ from 
  $\Gamma'$.  It now follows from the description of $S'$ in Theorem~\ref{th:strip} that
the vector $\uu=(0,0,1)$ points into $\Omega'$ at $p'$.   However, the smooth convergence 
and the choice of $p_n$
imply that $\uu=(0,0,1)$ points out of $\Omega'$ at $p'$.
The contradiction proves the claim.
\end{proof}

\begin{claim} \label{cl:diffeo}
The surfaces $\overline S_n$ and $ \overline S \cup P_n$ are homeomorphic.
\end{claim}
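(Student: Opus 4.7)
My plan is to split both $\overline{S_n}$ and $\overline{S}\cup P_n$ along the horizontal slice $\{z=a\}$, with $a>0$ as in Claim~\ref{cl:a}, and identify the two halves separately.

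For the upper half $\{z\ge a\}$, assertion~\ref{away-from-Gamma-item} (already established above) gives smooth convergence $\overline{S_n}\to\overline{S}$ on the compact set $\{z\ge a\}\cap\overline{\h^3}$, which lies in $\overline{\h^3}\setminus\Gamma$. Hence for $n$ large,
\[
  \overline{S_n}\cap\{z\ge a\}\;\cong\;\overline{S}\cap\{z\ge a\}\;=\;(\overline{S}\cup P_n)\cap\{z\ge a\},
\]
where the last equality uses $P_n\subset\partial\h^3$. If $a$ is small enough, this common set is diffeomorphic to $\overline{S}$ itself (just $\overline{S}$ with a boundary collar removed).

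For the lower half $\overline{S_n}\cap\mathcal{R}_a$, the goal is to show that it is homeomorphic to the corresponding ``collar-plus-bridge'' subset of $\overline{S}\cup P_n$, namely a product collar over $\partial S\setminus\partial P_n$ joined along two arcs to a topological rectangle realizing $P_n$. Here Claim~\ref{cl:a} enters. Consider the tangential projection $V=\uu-(\uu\cdot\nu)\nu$ of $\uu=(0,0,1)$ onto $\overline{S_n}$; it satisfies $dz(V)=|V|^2$, so $z$ strictly increases along its flow away from critical points. By Claim~\ref{cl:a}, every critical point of $z$ on $\overline{S_n}\cap\mathcal{R}_a$ is of the single type where $\uu$ points \emph{out} of $\mathcal{Q}_n$, and by the blow-up classification of Claim~\ref{possible-limits-claim}, such a point is locally modelled on the dome-type critical point of the standard minimal strip of Theorem~\ref{th:strip}. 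A Morse-theoretic analysis of $z$ on $\overline{S_n}\cap\mathcal{R}_a$, combined with the boundary regularity of $\overline{S_n}$ near $\partial S_n$, then shows that no ``hidden'' handles are introduced and that $\overline{S_n}\cap\mathcal{R}_a$ has precisely the expected collar-plus-bridge topology. Gluing along the common boundary curve $\overline{S_n}\cap\{z=a\}$, which matches $\overline{S}\cap\{z=a\}$ by the same smooth convergence, produces the desired homeomorphism $\overline{S_n}\cong\overline{S}\cup P_n$. The main obstacle is converting the pointwise control of Claim~\ref{cl:a} into this global topological model; the blow-up classification is essential for ensuring that the allowed critical points model the minimal-strip dome locally and contribute no extra topology.
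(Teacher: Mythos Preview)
Your plan shares the paper's two key ingredients---the split along $\{z=a\}$ and the use of Claim~\ref{cl:a}---but the paper packages them into a much shorter contradiction argument rather than a direct construction.  The paper argues: $\overline{S_n}$ and $\overline{S}\cup P_n$ have the same boundary, so if they are not homeomorphic they must have different genera.  By smooth convergence on $\{z\ge a\}$ the upper pieces already match, so any extra genus lives in $S_n\cap\mathcal{R}_a$.  After slightly moving the point at infinity so that $z$ is Morse on $S_n$, nontrivial genus in the slab forces a critical point of $z$ at which $\uu=(0,0,1)$ points \emph{into} $\mathcal{Q}_n$, contradicting Claim~\ref{cl:a}.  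That is the whole proof.

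Your direct approach would work in principle, but the crucial step is left as ``A Morse-theoretic analysis \ldots\ then shows that no `hidden' handles are introduced.''  That sentence is precisely where the argument lives; you need to explain why critical points with $\uu$ pointing only \emph{out} of $\mathcal{Q}_n$ cannot produce genus.  The paper's contradiction format isolates exactly this fact (genus in the slab would require a critical point of the forbidden type) and avoids having to build an explicit collar-plus-bridge model.  Also, your appeal to Claim~\ref{possible-limits-claim} to get a ``dome-type local model'' at each allowed critical point is not what that claim provides: it concerns limits of rescalings as $n\to\infty$ (and was already used to prove Claim~\ref{cl:a}), not the local shape of $S_n$ for a fixed $n$.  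All you actually need from the critical points is the direction of the normal, and that is exactly the content of Claim~\ref{cl:a}; no further local model is required.
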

Suppose $\overline S_n$ is not homemorphic to $\overline S \cup P_n$.
As they have the same boundary, then it means that $\overline S_n$ and $\overline S \cup P_n$
have different genus. Consider the positive constant $a$ given by Claim~\ref{cl:a}.  
The smooth convergence on compact sets implies 
$S_n \cap (\h^3 \setminus \mathcal{R}_a)$ is homemorphic to 
$S \cap (\h^3 \setminus \mathcal{R}_a)$, so our assumption 
gives that $S_n \cap \mathcal{R}_a$ has non
trivial genus. 

Up to a slight modification of the point of infinity in the upper half-space model of $\h^3$, we can assume that the function $z$ is a {\bf Morse function} for the surface $S_n$. This implies the existence of a critical point of the height function $z$ in  $\overline S_n \cap \mathcal{R}_a$ such that the vector $u=(0,0,1)$ points
in the direction of the region $\mathcal{Q}_n$, which is contrary to Claim~\ref{cl:a}. This contradiction completes the proof of this claim.

\begin{claim} \label{cl:uniqueness}
If $n$ is large enough, then the surfaces $S_n$ are uniquely area-minizing: 
if $T_n$ is any area-minimizing surface in $\h^3$ with 
$\partial  T_n= \partial S_n$, then $T_n=S_n$ (for all sufficiently large $n$). 
\end{claim}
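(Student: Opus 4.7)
The plan is to argue by contradiction using the strict $L^\infty$ stability of $S$ together with the strict $L^\infty$ stability of the possible blow-up limits identified in Claim~\ref{possible-limits-claim}. Suppose, after passing to a subsequence, that for every $n$ there is an area-minimizing surface $T_n\neq S_n$ with $\partial T_n=\partial S_n$. By the Compactness Theorem~\ref{compactness-theorem} applied on $\overline{\h^3}\setminus\Gamma$, together with the fact that $S$ is uniquely area-minimizing, $\overline{T_n}$ converges smoothly to $\overline{S}$ on compact subsets of $\overline{\h^3}\setminus\Gamma$. Thus on any compact set $K\subset \overline{\h^3}\setminus\Gamma$, for $n$ large, $T_n$ is a smooth normal graph over $S_n$ by a function $u_n$ satisfying a quasilinear elliptic PDE whose linearization is the Jacobi operator of~$S_n$. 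Set $\varepsilon_n:=\sup|u_n|$ (the supremum being taken over the interior of $S_n$), and note $\varepsilon_n>0$ since $T_n\neq S_n$.

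Rescale by letting $v_n:=u_n/\varepsilon_n$, so $\sup|v_n|=1$. Pick points $p_n\in S_n$ where $|v_n|\geq 1/2$. I would first dispose of the case in which a subsequence of the $p_n$ stays in a compact subset of $\overline{\h^3}\setminus\Gamma$: there, $S_n\to S$ smoothly and the linearization of the graph equation shows that $v_n$ converges subsequentially to a bounded Jacobi field $v$ on $S$ with $\sup|v|\geq 1/2$; moreover $v$ vanishes along $\partial S$ by Theorem~\ref{th:boundary-point} and by the fact that $\partial T_n=\partial S_n$ on $\partial S\setminus\partial P_n$. Strict $L^\infty$ stability of $S$ forces $v\equiv 0$, a contradiction. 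So the $p_n$ must approach $\Gamma$.

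It remains to rule out the case $p_n\to\Gamma$. Write $p_n=(x_n,y_n,z_n)$ in the upper half-space model; then $z_n\to 0$. Form the rescaled surfaces $S_n':=(S_n-(x_n,y_n,0))/z_n$ and $T_n':=(T_n-(x_n,y_n,0))/z_n$, so that $p_n$ is sent to $(0,0,1)$. By Claim~\ref{possible-limits-claim}, after passing to a subsequence, both $S_n'$ and $T_n'$ converge smoothly on compact subsets of $\h^3$ to the \emph{same} limit $S'$, which is a vertical half-plane, a minimal skillet, a minimal strip, or empty. The empty case is excluded because $p_n'=(0,0,1)\in S_n'$. In each of the remaining cases $S'$ is strictly $L^\infty$ stable (by Corollary~\ref{co:plane}, Theorem~\ref{th:skillet} together with the last theorem of the previous section, and Theorem~\ref{th:strip}, respectively). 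Because $S_n'$ and $T_n'$ converge to the \emph{same} smooth limit, for large $n$ the surface $T_n'$ is a normal graph over $S_n'$ by a function $\tilde u_n$, and the normalized functions $\tilde v_n:=\tilde u_n/\sup|\tilde u_n|$ satisfy $|\tilde v_n(0,0,1)|\geq 1/2+o(1)$ by the choice of $p_n$ (the rescaling is isometric, so ratios of normal distances are preserved). A subsequence of $\tilde v_n$ converges to a bounded Jacobi field $\tilde v$ on $S'$ with $|\tilde v(0,0,1)|\geq 1/2$, contradicting the strict $L^\infty$ stability of~$S'$.

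The main obstacle is the second case: one must verify that the rescaling $(x,y,z)\mapsto((x,y,z)-(x_n,y_n,0))/z_n$ preserves enough of the structure of $u_n$ to extract a genuinely nontrivial bounded Jacobi field in the limit, and one must confirm that the boundary values of $\tilde v_n$ along $\partial S'$ vanish in the limit (so that strict $L^\infty$ stability, which concerns Jacobi fields without imposed boundary behavior except boundedness, truly applies). These points follow from the smooth convergence of Claim~\ref{possible-limits-claim} together with Theorem~\ref{th:boundary-point} applied to the limit surface~$S'$. Once these are in hand, the contradiction is immediate, and the strict $L^\infty$ stability of $S_n$ for large $n$ follows by essentially the same blow-up argument, completing the proof.
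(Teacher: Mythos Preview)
Your approach is the same as the paper's: argue by contradiction, split into the case where the ``bad point'' $p_n$ stays in the interior (use strict $L^\infty$ stability of $S$) and the case where $p_n$ escapes to $\partial\h^3$ (rescale, invoke Claim~\ref{possible-limits-claim}, and use the strict $L^\infty$ stability of the blow-up limit $S'$).

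The one substantive difference is the normalization. You write $T_n$ as a normal graph over $S_n$ by a function $u_n$ and set $\varepsilon_n=\sup|u_n|$; the paper instead chooses $p_n\in S_n$ to \emph{maximize the hyperbolic distance} $\dist(\cdot,T_n)$. The paper's choice is cleaner for two reasons. First, $\dist(\cdot,T_n)$ is globally defined on $S_n$, whereas you have only established that $T_n$ is a graph over $S_n$ on compact subsets of $\overline{\h^3}\setminus\Gamma$; thus your $\sup|u_n|$ over ``the interior of $S_n$'' is not yet justified. Second, the maximum of $\dist(\cdot,T_n)$ is attained because $\dist(q,T_n)\to 0$ as $q\to\partial S_n$ (both $\overline{S_n}$ and $\overline{T_n}$ meet $\partial\h^3$ orthogonally along the same curve). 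Since the rescaling is a hyperbolic isometry, the maximizing property of $p_n$ persists after blow-up, directly producing a bounded Jacobi field on $S'$ that attains its maximum absolute value at $(0,0,1)$. Your version is easily repaired by replacing $u_n$ with the distance function; as written, the global definition of $u_n$ is a small gap.

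One minor remark: you do not need to check that the limiting Jacobi field vanishes along $\partial S$ or $\partial S'$. Strict $L^\infty$ stability (Definition~\ref{def:linf}) forbids \emph{any} nonzero bounded Jacobi field, with no boundary condition imposed, so your appeal to Theorem~\ref{th:boundary-point} in that step, and the worry in your final paragraph about boundary values of $\tilde v_n$, are unnecessary.
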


Suppose the uniqueness is false. Then, up to a subsequence, we may assume that $S_n$ and
 $T_n$ are different for all $n$. 
 Note that all properties we have proved for $S_n$ also hold for $T_n$.
 In particular, $T_n$ also converges smoothly to $S$ on compact subsets of 
    $\overline{\h^3}\setminus \Gamma$.
 
 As $S_n$ and $T_n$ are asymptotic at $\partial\h^3$, 
 then we can find a point $p_n=(x_n,y_n,z_n) \in S_n$ that 
 maximizes\footnote{The maximum exists because the hyperbolic distance from a point $q$ in $S_n$
 to $T_n$ tends to $0$ as $q$ approaches the boundary of hyperbolic space.  This follows from
 the fact that $\overline{S_n}$ and $\overline{T_n}$ meet $\partial \h^3$ orthogonally along the same curve.}
  the (hyperbolic) distance to $T_n$.

By passing to a subsequence, we can assume that $p_n$ converges to a point $p=(x,y,z)$.
If $p\in S$, then the smooth convergence of $T_n$ and $S_n$ to $S$ give rise to a nonzero Jacobi field
on $S$ that attains its maximum absolute value at $p$.  But that is impossible by the strict $L^\infty$
stability of $S$.

Thus $p\in \partial S$, so $z_n\to 0$.
Translate $S_n$, $T_n$, and $p_n$ by $-(x_n,y_n,0)$ and then dilate by $1/z_n$ to get 
$S_n'$, $T_n'$ and $p':=(0,0,1)\in S_n'$ with
\begin{equation}\label{eq:choice}
  \dist(p',T_n') = \max_{q\in S_n'}\dist(q,T_n').
\end{equation}
By Claim~\ref{possible-limits-claim}, we can assume (by passing to a subsequence)
that $S_n'$ and $T_n'$ converge smoothly on compact subsets of $\h^3$ to the
same strictly $L^\infty$ stable limit surface $S'$.
By~\eqref{eq:choice}, the smooth convergence of $S_n'$ and $T_n'$  to $S'$
gives rise to a nonzero jacobi field on $S'$ that attains its maximum absolute value
at the point $p'$.  But that contradicts the strict $L^\infty$ stability of $S'$,
thus proving Claim~\ref{cl:uniqueness}.

To complete the proof of the Bridge Theorem~\ref{th:bridge}, it remains only to prove
that the surface $S_n$ is strictly $L^\infty$ stable for all sufficiently large $n$.
The proof is almost the same as the proof of Claim~\ref{cl:uniqueness}.
Suppose the strict $L^\infty$ stability fails.  Then we can assume that each $S_n$
has a nonzero bounded Jacobi field $V_n$.  By Theorem~\ref{th:boundary-point}, 
      $V_n(p)$ tends to $0$ as $p\to \partial S_n$,
so $|V_n(\cdot)|$ attains its maximum at a point 
$p_n=(x_n,y_n,z_n)$ in $S_n$.  We can normalize $V_n$ so that $|V_n(p_n)|=1$.
By passing to a subsequence, we can assume that $p_n$ converges to a point $p=(x,y,z)$.

If $z>0$, then the $V_n$ converge subsequentially to a Jacobi field $V$ on $S$ that attains its
maximum absolute value of $1$ at  the point $p$.  But that violates the strict $L^\infty$ stability of $S$.

Thus $z=0$.  Now translate $S_n$ by $-(x_n,y_n,0)$ and dilate by $1/z_n$ to get a surface $S_n'$.
By Claim~\ref{possible-limits-claim}, a subsequence of the $S_n'$ converges smoothly to a 
strictly $L^\infty$ stable surface $S'$.  
  However, by construction, $S'$ has a Jacobi field that attains  a maximum
absolute value $1$ at the point $p':=(0,0,1)$, a contradiction.
\end{proof}

\section{Properly embedded area-minimizing surfaces in $\h^3$}

In this section, we are going to prove the main existence results for properly embedded area
minimizing surfaces with arbitrary (orientable) topology. The techniques we use are inspired 
in those developed by Ferrer, Meeks and the first author for the study of the Calabi-Yau problem
in $\R^3$ (see \cite{fmm}).
\begin{theorem} \label{th:first}
Let $S$ be an open, connected, oriented surface. Then, there exists a complete, proper, area-minimizing embedding $\psi: S \to \h^3$. Moreover,  the embedding $\psi$ can be constructed in such a way that the limit sets of different ends of $S$ are disjoint.
\end{theorem}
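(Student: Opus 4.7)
The plan is to build the embedding by induction along a simple exhaustion of $S$, applying the Bridge Theorem~\ref{th:bridge} at each step to realize the prescribed topological change. Fix a simple exhaustion $M_1\subset M_2\subset\cdots$ of $S$ as in the Preliminaries, so that $M_1$ is a disk and each $M_{n+1}\setminus\Int(M_n)$ has exactly one non-annular component, either a pair of pants or an annulus with a handle. For the base case take $S_1\subset\h^3$ to be a totally geodesic hemisphere in the upper half-space model; Corollary~\ref{co:plane} gives strict $L^\infty$ stability, the convex hull property gives unique area-minimization, and $\overline{S_1}$ is a smooth disk homeomorphic to $M_1$.

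For the inductive step, assume $S_n$ is properly embedded, uniquely area-minimizing, strictly $L^\infty$ stable with smooth closure $\overline{S_n}\subset\overline{\h^3}$, and equipped with a homeomorphism $\phi_n\colon M_n\to\overline{S_n}$. I realize the topological move from $M_n$ to $M_{n+1}$ by one or two bridge attachments on $\partial\h^3$: a pair of pants is produced by one bridge whose arc $\Gamma$ has both endpoints on the same boundary circle of $\overline{S_n}$ (placed on the orientation-preserving side), which splits that circle in two; an annulus with a handle is produced by such a splitting bridge followed by a second bridge joining the two newly created circles in a configuration that adds a handle rather than merely reconnecting them (the Euler-characteristic count of two bridges correctly yields $\chi(M_n)-2$). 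Choose bridges that shrink nicely to $\Gamma$ (Proposition~\ref{pro:bridges}) and are sufficiently small; the Bridge Theorem~\ref{th:bridge} then delivers $S_{n+1}$ satisfying all the inductive properties, and item~(3) of that theorem lets me extend $\phi_n$ to $\phi_{n+1}\colon M_{n+1}\to\overline{S_{n+1}}$. Annular components of $M_{n+1}\setminus\Int(M_n)$ add no topology and simply allow me to rearrange end labels as needed.

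To pass to the limit, fix a compact exhaustion of $\h^3$ and at step $n$ require each added bridge to have Euclidean width on $\partial\h^3$ less than $2^{-n}$ and to lie inside a prescribed small neighborhood of the current boundary $\partial\overline{S_n}\subset\partial\h^3$. Item~(2) of the Bridge Theorem then gives smooth convergence of $\overline{S_{n+1}}$ to $\overline{S_n}$ on compact subsets of $\overline{\h^3}\setminus\Gamma_n$, so the sequence $\{S_n\}$ is Cauchy in the smooth topology on compact subsets of $\h^3$, and the Compactness Theorem~\ref{compactness-theorem} yields a properly embedded, area-minimizing limit $S_\infty$. To control limit sets, the first time a given end $e$ of $S$ is created by a splitting bridge, assign to it an open neighborhood $U_e\subset\partial\h^3$ that is disjoint from those previously assigned, and require every subsequent bridge that acts on this end to be supported in a shrinking sequence of sub-neighborhoods of $U_e$. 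Then the boundary arcs representing $e$ in $\overline{S_n}$ remain inside $U_e$ for all $n$, so the limit set of $e$ in $S_\infty$ lies in $\overline{U_e}$; disjointness of the $U_e$ gives disjointness of the limit sets. Meanwhile, the $\phi_n$ stabilize on any compact subset of $S$ to produce the desired homeomorphism $S\to S_\infty$.

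The main obstacle is the simultaneous bookkeeping: the bridges must be small enough to force smooth convergence on compact subsets of $\h^3$ (so the limit is genuinely properly embedded and not accumulating in the interior), positioned so that the end labeling is respected and different ends accumulate in disjoint regions of $\partial\h^3$, and compatible with the strict $L^\infty$ stability needed to iterate the Bridge Theorem. That last requirement is taken care of automatically by the Bridge Theorem itself, which is precisely why strict $L^\infty$ stability is the correct inductive hypothesis.
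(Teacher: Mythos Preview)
Your proposal is correct and follows essentially the same approach as the paper: induct along a simple exhaustion starting from a totally geodesic disk, apply the Bridge Theorem once (pair of pants) or twice (annulus with handle) at each step with the arc placed inside the disk bounded by the relevant boundary circle, and take a limit. The only notable difference is that the paper makes the convergence and topology-preservation quantitative by recording, at each stage, that $\Sigma_n\cap B(0,r_i)$ is a normal graph over $\Sigma_i$ with summable $C^1$ bounds (their condition~(IV$_n$)), which directly yields Ascoli--Arzel\`a convergence and the diffeomorphism $\Sigma\cap\overline{B(0,r_i)}\cong S_i$; your softer Cauchy-on-compacta argument works but leaves the verification that the limit has the right topology a bit implicit.
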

\begin{proof}
Throughout this proof we are going to use the model of the Poincaré ball. 
Let $\mathcal{S}= \{S_1 \subset S_ 2 \subset \cdots \subset S_n \subset \cdots \}$  be a simple exhaustion of
$S$. Our purpose is to construct a sequence of properly embedded minimal surfaces 
$\{\Sigma_n\}_{n \in \n}$ and two sequences of positive real numbers $\{\varepsilon_n\}_{n \in \n}$  and $\{r_n\}_{n \in \n}$ satisfying:
\begin{enumerate}
\item $\{\varepsilon_n\} \searrow 0$ and $\{r_n\} \nearrow +\infty;$
\item $\displaystyle \sum_{n=1}^\infty \varepsilon_n <1$. 
\end{enumerate}
Moreover, for each $n \in \n$,  the minimal surface $\Sigma_n$ satisfies:
\begin{enumerate}[(I$_n$)]
\item $\Sigma_n$ is  strictly $L^\infty$ stable and uniquely area minimizing; 
\label{properties}
\item $\Sigma_n$ admits a $C^\infty$ extension $\overline{\Sigma}_n$ to $\overline{\h}^3$ so that $\overline{\Sigma}_n$ is diffeomorphic to $S_n$;
\item $\Sigma_n \cap \overline{B(0, r_j)}$ is diffeomorphic to $S_j$, for $ j=1, \ldots,n,$ where $B(0,r)$ represents the hyperbolic ball centred at $0$ of radius $r$;
\item $\Sigma_n \cap B(0,r_i)$ is a normal graph over its projection $\Sigma_{i,n} \subset \Sigma_i$, for $i<n$. 
Furthermore, if we write $\Sigma_n \cap B(0,r_i)=\{\exp_p \left(f_{i,n}(p) \cdot \nu_i(p) \right) \; | \; p \in \Sigma_{i,n} \}$, where $\nu_i$ is the Gauss map of $\Sigma_{i}$, then:
\begin{itemize}
\item $|\nabla f_{i,n} | \leq \sum_{k=i+1}^n \varepsilon_k$ and
\item $| f_{i,n} | \leq \sum_{k=i+1}^n \varepsilon_k$, for $i=1, \ldots,n-1$.
\end{itemize}
\end{enumerate}
First, we fix a sequence which satisfies $\displaystyle \sum_{n=1}^\infty \varepsilon_n <1$ (for instance 
$\varepsilon_n=\frac{3}{\pi^2 n^2}$).
The above sequences are obtained by recurrence. In order to define the first elements, we consider a totally geodesic disk in $\h^3$. The choice of $r_1$ is irrelevant. 

Assume now we have defined $\Sigma_n$ and  $r_{n}$ and satisfying items from (I$_n$)
to (IV$_n$).  We are going to construct the minimal surface
$\Sigma_{n+1}$. 

As the exhaustion $\mathcal{S}$ is simple, then we
know that $S_{n+1}- \Int(S_n)$ contains a unique nonannular
component $N$ which topologically is a pair of pants or an annulus
with a handle. Label $\gamma$ as the connected component of
$\partial N$ that is contained in $\partial S_n$. We label the
connected components of $\partial \Sigma_n$, $\Gamma_1, \ldots,
\Gamma_k$, in such a way that $\gamma$ maps to $\Gamma_k$ by the
homeomorphism which maps $S_n$ into $\Sigma_n$. Then, we apply
Theorem~\ref{th:bridge}  to $\Sigma_n$ in the following way.
\vskip 5mm

\noindent {\bf Case 1.} {\em $N$ is a pair of pants.} 

The curve $\Gamma_{k}$ bounds a disk $D_{k}$ in 
$\partial  \h^{3}$ that does not intersects the other boundary curves of $\Sigma_{n}$. Consider
an arc $\Gamma\subset D_{k}$ so that $\Gamma \cap \Gamma_{k}=\partial \Gamma$. Then, we apply
Theorem~\ref{th:bridge} to the configuration $\Sigma_{n} \cup \Gamma$. In this way, we construct a family $\{T_{m}\}_{m \in \n}$ of properly embedded minimal surfaces obtained from $\Sigma_{n}$ by adding a bridge $B_{m}^1$ that ``divides'' $\Gamma_{k}$ into two different curves in $\partial \h^{3} $. Note that the surfaces $T_{m}$ have the same topology as $S_{n+1},$ for all $m \in \n$.
\vskip 5mm

\noindent {\bf Case 2.} {\em $N$ is a cylinder with a handle.} 

We construct the surface 
$T_m$, like in the previous case. But this time we add a second bridge $B_m^2$
along a curve $\sigma$ joining two opposite points in $\partial B_m^1$
(see Figure~\ref{fig:ends-3}). Notice that, in this way, the old
annular component becomes an annulus with a handle. Again the resulting surfaces, that
we still call $T_{m}$, are homeomorphic to $S_{n+1}$.
\vskip 5mm

In both cases, we obtain a sequence of properly embedded, area-minimizing surfaces $T_m$ satisfying:
\begin{enumerate}[(i)]
\item $T_m$ is strictly $L^\infty$ stable and uniquely area minimizing.
\item $T_m$ admits a smooth extension to $\overline{\h}^3$ and 
$\overline{T_m}$ is diffeomorphic to $S_{n+1}$. 
\item \label{sigma-3} The surfaces $T_m \cap \overline{B(0,r_n)}$ are
diffeomorphic to $\Sigma_n \cap \overline{B(0,r_n)}$ and converge in the $C^\infty$ topology
to $\Sigma_n \cap \overline{B(0,r_n)}$, as  $m\to\infty$.   
\end{enumerate}

Item \eqref{sigma-3} and property (IV$_n$) imply that
$T_m \cap \overline{B(0,r_i)}$ can be expressed as a
normal graph over its projection $\Sigma_{i,m} \subset
\Sigma_i$, $i=1, \ldots, n$; 
$$T_m \cap
\overline{B(0,r_i)}=\{\exp_p\left(h_{m,i}(p) \, \nu_i(p)\right) \; | \; p \in
\Sigma_{i,m} \}.$$
 Since, as $m \to \infty$, the surfaces $T_m$
converge smoothly to $\Sigma_n$ in $B(0,r_n)$ and $\Sigma_n$ satisfies
(IV$_n$), then we have:
\begin{equation} \label{eq:grad}
\mbox{\rm max} \{|h_{m,i}|, |\nabla h_{m,i}| \}< \sum_{k=i+1}^{n+1} \varepsilon_k
\end{equation}
 for $m$ large enough.

Then, we define $\Sigma_{n+1}\df T_m$, where
$m$ is chosen sufficiently large in order to satisfy
\eqref{eq:grad}. We chose  $r_{n+1}$ big enough in order to guarantee
that $\Sigma_{n+1} \cap \overline{B(0,r_{n+1})}$ is diffeomorphic to $S_{n+1}$. It is
clear that $\Sigma_{n+1}$ so defined fulfills (I$_{n+1}$), $\ldots,$  (IV$_{n+1}$).
\begin{remark} \label{sei}
Taking into account the way in which we are using the bridge principle at infinity to modify the topology of $\Sigma_n$, it is important to notice that the new boundary curves of $\Sigma_{n+1}$ are contained in the disk $D_k \subset  \partial\h^3$.
\end{remark}

Now, we have constructed our sequence of minimal surfaces $\{
\Sigma_n \}_{n \in \n}$. Taking into account properties (IV$_n$), for
$n \in \n$, and using Ascoli-Arzela's theorem, we deduce that the
sequence of surfaces $\{ \Sigma_n \}_{n \in \n}$ converges to a
properly embedded minimal surface $\Sigma$ in the $C^m$ topology, for
all $m \in \n$. Moreover, $\Sigma \cap \overline{B(0,r_i)}$ is a normal
graph over its projection $\Sigma_{i,\infty} \subset \Sigma_i$, for all $i \in \n$, and the
norm of the gradient of the graphing functions its at most $1$ (see
properties (IV$_n$)).

Finally, we check that $\Sigma$ satisfies all the statements in the
theorem.
\vskip 3mm

\noindent $\bullet$ {\em $\Sigma$ is diffeomorphic to $S$.} If we
consider the (simple) exhaustions $\{\Sigma \cap \overline{B(0,r_n)} \; | \; n
\in \n\}$ of $\Sigma$ and $\{ S_n \; | \; n \in \n \}$ of $S$, then
we know that there exists a diffeomorphism $\psi_n: S_n \to \Sigma \cap \overline{B(0,r_n)}. $
Furthermore, due to the way in which we have constructed $\Sigma$, we have  that
$\psi_n |_{S_i}=\psi_i$, for all $i<n$. Hence, we can construct a diffeomorphism $\psi: S\to \Sigma$. 

If we consider on $S$ the pull back of the metric of $\Sigma$, then $\psi$ is the minimal embedding
we are looking for.
\vskip 3mm

\noindent $\bullet$ {\em $\Sigma$ is area minimizing.} {The limit of area-minimizing surfaces
is area minimizing.}
\vskip 3mm

\noindent $\bullet$ {\em The limit sets of distinct ends are disjoint.} 
We are going to assume that $\Sigma$
has at least two ends, as otherwise this property does not make sense. 
 Two different ends of $\Sigma$, $E_1$ and $E_2$, can be represented by
 two disjoint components, $C_1$ and $C_2$, of $\Sigma \setminus B(0,r_n)$, for a sufficiently large 
 $n \in \n$. Consider $\partial_i=C_i \cap \overline{B(0,r_n)}$, $i=1,2$. Recall that $\Sigma \cap
 \overline{B(0,r_n)}$ is a graph over $\Sigma_n$. Then, we label $\partial_1^n$ and $\partial_2^n$ 
 the projection over $\Sigma_n$ of $\partial_1$ and $\partial_2$, respectively.
 
 Observe that, from our method of construction, $\partial_i$ (and $\partial_i^n$) is a connected curve,
 for $i=1,2$. The curves $\partial_1^n$ and $\partial_2^n$ bound two different annular ends of $\Sigma_n$
 that we call $A_1^n$ and $A_2^n$, respectively. 
 For $i=1,2$, let $\Gamma_i^n$ be the ideal boundary of
 $A_i^n$:
 \[
     \Gamma_i^n := \overline{A_i^n}\cap\partial \h^3.
\] 
The curve $\Gamma_i^n$ bounds a disk $D_i^n \subset \partial\h^3$,
 $i=1,2$, and we know that $D_1^n \cap D_2^n = \varnothing$. Taking Remark~\ref{sei} into  account,
 we deduce that $L(E_1)  \subset  D_1^n$ and $L(E_2) \subset D_2^n$. This concludes the proof.
 \end{proof}

We would like to finish this section by pointing out that a suitable modification of the 
 methods allow us to construct
properly embedded area-minimizing surfaces so that the limit set is the whole ideal
boundary $\partial\h^3$.

\begin{lemma}\label{far-apart-lemma}
If $R/r$ is sufficiently large, then there is no open, connected, area-minimizing
surface $M$  in $\h^3$ such that, in the upper halfspace model of $\h^3$,
\begin{enumerate}[\rm (i)]
\item $\partial M$ is disjoint from $\{p\in\overline{\h^3}: r< |p| < R\}$, and
\item $M\cap\{|p|\le r\}$ and $M\cap \{p: |p|\ge R\}$ are both nonempty.
\end{enumerate}
\end{lemma}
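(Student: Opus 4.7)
I would prove this by contradiction. Suppose there exist sequences $r_n<R_n$ with $R_n/r_n\to\infty$ and open, connected, area-minimizing surfaces $M_n\subset\h^3$ satisfying~(i) and~(ii). Because the dilation $p\mapsto p/\sqrt{r_nR_n}$ is a hyperbolic isometry in the upper half-space model, replace $M_n$ by its image, after which the inner and outer hemispheres have radii $1/\lambda_n$ and $\lambda_n$ with $\lambda_n:=\sqrt{R_n/r_n}\to\infty$. Since $M_n$ is connected and meets both $\{|p|\le 1/\lambda_n\}$ and $\{|p|\ge\lambda_n\}$, the intermediate value theorem applied to the continuous function $\log|p|\colon M_n\to\RR$ produces a point $p_n\in M_n\cap\{|p|=1\}$. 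Let $g_n$ be a hyperbolic isometry mapping $p_n$ to $(0,0,1)$ and set $\widetilde M_n:=g_n(M_n)$. Each $\widetilde M_n$ is area-minimizing, contains $(0,0,1)$, and its limit set satisfies $L(\widetilde M_n)\subset g_n(D_n^-)\cup g_n(D_n^+)$, where $D_n^\pm$ are the closed hyperbolic half-spaces bounded by the hemispheres of radii $1/\lambda_n$ and $\lambda_n$ and not containing $p_n$.

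The geometric heart of the argument is that both totally geodesic planes bounding $g_n(D_n^\pm)$ sit at hyperbolic distance $\log\lambda_n\to\infty$ from $(0,0,1)$. Passing to the ball model of $\h^3$ centered at $(0,0,1)$, a totally geodesic plane at hyperbolic distance $d$ from the center is a Euclidean sphere of radius comparable to $e^{-d}$, and the non-central side it cuts off shrinks in the Euclidean sense to a single point of $\partial B$ as $d\to\infty$. Hence, after passing to a subsequence, $g_n(D_n^-)\to\{q_1\}$ and $g_n(D_n^+)\to\{q_2\}$ in the Hausdorff topology on $\overline{\h^3}$ for some $q_1,q_2\in\partial\h^3$.

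Finally, I would apply the Compactness Theorem~\ref{compactness-theorem} to extract a further subsequence along which $\widetilde M_n$ converges smoothly with multiplicity one on compact subsets of $\h^3$ to an area-minimizing surface $N$. Because $(0,0,1)\in\widetilde M_n$ for every $n$, the smooth convergence forces $N$ to be a nonempty two-dimensional surface containing $(0,0,1)$, and the inclusions $L(\widetilde M_n)\subset g_n(D_n^-)\cup g_n(D_n^+)$ pass to the limit to give $L(N)\subset\{q_1,q_2\}$. By the Convex Hull Property (Proposition~\ref{convex-hull-proposition}), $N$ lies in the convex hull of $\{q_1,q_2\}$, which is either a single geodesic of $\h^3$ (if $q_1\ne q_2$) or a single boundary point, in either case a subset of $\overline{\h^3}$ of dimension at most one. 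This contradicts the existence of a two-dimensional neighborhood of $(0,0,1)$ in $N$. The most delicate step is verifying that the receding half-spaces $g_n(D_n^\pm)$ collapse to single points of $\overline{\h^3}$ in the Hausdorff topology; the rest is a standard blow-down followed by a convex-hull contradiction.
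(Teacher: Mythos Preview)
Your argument is correct and follows the same strategy as the paper's: argue by contradiction, normalize by a hyperbolic dilation, extract a subsequential limit via the Compactness Theorem, and obtain a contradiction from the Convex Hull Property. The paper's version is slightly more direct: after dilating so that $R_i=1/r_i\to\infty$ it passes straight to the limit $M$, notes that $L(M)\subset\{0,\infty\}$, and concludes that $M$ lies in the $z$-axis. Your extra step---picking $p_n\in M_n\cap\{|p|=1\}$ and composing with an isometry $g_n$ sending $p_n$ to $(0,0,1)$---forces you to carry out the half-space--collapse argument and leaves $q_1,q_2$ unspecified, but in exchange it makes the nonemptiness of the limit surface explicit (since $(0,0,1)\in N$), a point the paper handles by asserting that the limit meets every Euclidean hemisphere about the origin.
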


Here $|p|$ denotes the Euclidean distance from $p$ to the origin.

\begin{proof}
Suppose not. Then there is a sequence of open, connected, area-minimizing surfaces
$M_i$ in $\h^3$ such that
\begin{enumerate}[(i)]
\item $\partial M_i$ is disjoint from $\{p\in\overline{\h^3}: r_i< |p| < R_i\}$, and
\item $M\cap\{|p|\le r_i\}$ and $M\cap \{p: |p|\ge R_i\}$ are both nonempty,
\item $R_i/r_i\to\infty$.
\end{enumerate}
Since dilations are hyperbolic isometries, we can assume that $R_i=1/r_i$,
so that $R_i\to\infty$ and $r_i\to 0$.  
By the Compactness Theorem~\ref{compactness-theorem},
a subsequence of the $M_i$ converges smoothly on compact subsets of $\h^3$
to a properly embedded minimal surface $M\subset \h^3$ such that $M$ intersects
each Euclidean sphere centered at the origin and such that the limit set $L(M)$ of $M$ is contained
in $\{0,\infty\}$. By the convex hull property (Proposition~\ref{convex-hull-proposition}),
$M$ is contained in the $z$-axis, which is  impossible.
\end{proof}

\begin{corollary}\label{far-apart-corollary}
Let $M_1\subset \h^3$ be a uniquely area-minimizing surface
and let $p$ be a point in $\overline{\h^3}\setminus \overline{M_1}$.
Then $p$ has a neighborhood $U\subset \overline{\h^3}$ with the following property:
if $M_2$ is a uniquely area-minimizing surfaces that lies in $U$, then $M_1\cup M_2$
is also uniquely area-minimizing.
\end{corollary}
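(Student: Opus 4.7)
The plan is to choose $U$ small enough that Lemma~\ref{far-apart-lemma} forces any area-minimizing competitor $N$ with $\partial N = \partial M_1 \cup \partial M_2$ to decompose into two disjoint pieces whose limit sets are exactly $\partial M_1$ and $\partial M_2$; unique area-minimization of $M_1$ and $M_2$ then yields $N = M_1 \cup M_2$.

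The case $p \in \h^3$ is vacuous: take $U$ to be any open hyperbolic ball around $p$ disjoint from $\partial\h^3$, so any $M_2 \subset U$ has empty ideal boundary and, by the convex hull property~\ref{convex-hull-proposition}, must be empty. Henceforth assume $p \in \partial \h^3$. Applying an isometry of $\h^3$, we may place $p$ at the origin of the upper half-space model. Since $\overline{M_1}$ is closed in $\overline{\h^3}$ and avoids $p$, pick $R > 0$ so that $\overline{\h^3} \cap \{|q| \le R\}$ is disjoint from $\overline{M_1}$. Let $\rho$ be a ratio for which Lemma~\ref{far-apart-lemma} applies, choose $r_0 < r_1 < R$ with $R/r_1 > \rho$, and set $U := \overline{\h^3} \cap \{|q| < r_0\}$.

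Now let $M_2$ be uniquely area-minimizing with $M_2 \subset U$, and let $N$ be any area-minimizing surface in $\h^3$ with $\partial N = \partial M_1 \cup \partial M_2$. Then $\partial M_2 \subset \overline{U} \subset \{|q| \le r_0\}$ and $\partial M_1 \subset \{|q| > R\}$, so $\partial N$ is disjoint from the annular region $A := \overline{\h^3}\cap\{r_1 < |q| < R\}$. Let $N_\alpha$ be any connected component of $N$; its limit set is a nonempty closed subset of $\partial M_1 \cup \partial M_2$ (nonempty because a properly embedded open minimal surface in $\h^3$ with empty limit set would be compact, contradicting the convex hull property). Suppose for contradiction that $\partial N_\alpha$ meets both $\partial M_1$ and $\partial M_2$. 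Picking a smooth boundary point in each and invoking the boundary regularity Theorem~\ref{boundary-regularity-theorem}, $N_\alpha$ contains interior points both in $\{|q| < r_1\}$ and in $\{|q| > R\}$. Connectedness of $N_\alpha$ produces a path between them, which must cross $A$. But $N_\alpha$ is then a connected area-minimizing surface with $\partial N_\alpha \cap A = \varnothing$ and points on both sides of $A$, contradicting Lemma~\ref{far-apart-lemma}.

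Hence each component of $N$ has limit set entirely in $\partial M_1$ or entirely in $\partial M_2$, yielding a decomposition $N = N_O \sqcup N_I$ with $\partial N_O \subset \partial M_1$ and $\partial N_I \subset \partial M_2$. Disjointness of $\partial M_1$ and $\partial M_2$ combined with $\partial N = \partial M_1 \cup \partial M_2$ forces $\partial N_O = \partial M_1$ and $\partial N_I = \partial M_2$. Since every compact portion of $N_O$ is a compact portion of $N$, $N_O$ is itself area-minimizing; unique area-minimization of $M_1$ gives $N_O = M_1$, and similarly $N_I = M_2$, so $N = M_1 \cup M_2$. I expect the main obstacle to be the connectedness step: one must convert the mere fact that $\partial N_\alpha$ meets both boundary components into the existence of genuine interior points of $N_\alpha$ deep inside $\{|q|<r_1\}$ and $\{|q|>R\}$, which is why the buffer $r_0<r_1$ was introduced and why smooth boundary regularity (Theorem~\ref{boundary-regularity-theorem}) is essential for setting up the application of Lemma~\ref{far-apart-lemma}.
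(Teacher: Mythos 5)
Your proposal is correct and follows essentially the same route as the paper: choose $R$ with $M_1\subset\{|q|\ge R\}$, shrink $U$ so that Lemma~\ref{far-apart-lemma} applies, and observe that any competitor for $\partial M_1\cup\partial M_2$ either has a connected component crossing the annular gap (contradicting the lemma) or splits into pieces with boundaries $\partial M_1$ and $\partial M_2$, which unique minimization identifies with $M_1$ and $M_2$. Your extra steps (the interior case $p\in\h^3$, the buffer $r_0<r_1$, and the explicit component decomposition) merely spell out details the paper leaves implicit.
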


\begin{proof}
We can work in the halfspace model of $\h^3$ with $p=(0,0,0)$.
Choose $R>0$ so that 
\[
  M_1\subset \{q: |q|\ge R\}.
\]
Let $U=\{q: |q|<r\}$, where $R/r$ is sufficiently
large that the conclusion of Lemma~\ref{far-apart-lemma} holds. 

Let $M_2$ be a uniquely area-minimizing surface in $U$, and
suppose that $M_1\cup M_2$ is not uniquely area-minimizing.  Then there is an area-minimizing
surface $M$ such that $\partial M=\partial M_1\cup \partial M_2$ and such that $M\ne M_1\cup M_2$.
Since $M_1$ and $M_2$ are each uniquely area-minimizing, there must be a connected component
of $M$ that contains points in $\{q: |q|\le r\}$ and points in $\{q: |q|\ge R\}$, contradicting 
Lemma~\ref{far-apart-lemma}.
\end{proof}

\begin{proposition} \label{whole}
Let $M$ be an open, connected, orientable surface. Then there exists a complete, proper, area-minimizing embedding
$f \colon M \rightarrow \h^3$ such that the limit set is $\partial\h^3$.
\end{proposition}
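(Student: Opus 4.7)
The plan is to adapt the inductive construction of Theorem~\ref{th:first}, interleaving the topology-building bridges with ``cosmetic'' modifications that force the limit set to fill out a dense subset of $\partial\h^3$. Fix a countable dense subset $\{q_k\}_{k\in\n}$ of $\partial\h^3$ and a simple exhaustion $\{S_n\}_{n\in\n}$ of $M$.

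The induction will produce strictly $L^\infty$ stable, uniquely area-minimizing surfaces $\Sigma_n\subset\h^3$ diffeomorphic to $S_n$, exactly as in Theorem~\ref{th:first}, except that between stages $n$ and $n+1$ I insert a cosmetic step. First I choose a small Euclidean neighborhood $U_n\subset\overline{\h^3}$ of $q_n$ that is disjoint from $\overline{\Sigma_n}$; inside $U_n$ I place a small totally geodesic disk $\mathcal{D}_n$ (a hemisphere in the upper halfspace model) whose ideal boundary circle passes through $q_n$. Such a $\mathcal{D}_n$ is strictly $L^\infty$ stable by Corollary~\ref{co:plane} and uniquely area-minimizing by the convex hull property (Proposition~\ref{convex-hull-proposition}); by Corollary~\ref{far-apart-corollary}, the disjoint union $\Sigma_n\sqcup\mathcal{D}_n$ is uniquely area-minimizing, and since Jacobi fields decouple across components, the union is also strictly $L^\infty$ stable. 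I then apply the Bridge Theorem~\ref{th:bridge} to $\Sigma_n\sqcup\mathcal{D}_n$ along a thin arc $\Gamma_n\subset\partial\h^3$ joining a point of $\partial\Sigma_n$ to a point of $\partial\mathcal{D}_n$ away from $q_n$, obtaining a strictly $L^\infty$ stable, uniquely area-minimizing surface $\Sigma_n'$ homeomorphic to $(\Sigma_n\sqcup\mathcal{D}_n)\cup P_n$. A short Euler-characteristic count shows that attaching a closed topological disk (here $\mathcal{D}_n\cup P_n$) to $\Sigma_n$ along a single boundary arc does not change the homeomorphism type, so $\Sigma_n'$ is still homeomorphic to $S_n$. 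I then perform stage $n+1$ of the original construction starting from $\Sigma_n'$, choosing the topology-building arcs disjoint from $U_1,\ldots,U_n$.

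By the same convergence arguments as in Theorem~\ref{th:first} (controlled by the estimates analogous to $(\mathrm{IV}_n)$ and by item~(2) of the Bridge Theorem), the sequence $\{\Sigma_n'\}$ converges smoothly on compact subsets of $\h^3$ to a properly embedded, area-minimizing surface $\Sigma$ diffeomorphic to $M$. For each fixed $n$, the ideal circle $\partial\mathcal{D}_n$ passes through $q_n$, and every subsequent bridge modifies it only outside $U_n$, so a subarc of $\partial\mathcal{D}_n$ containing $q_n$ remains in $\partial\Sigma\subset L(\Sigma)$. Hence $q_n\in L(\Sigma)$ for every $n$. Since $L(\Sigma)\subset\partial\h^3$ is closed and $\{q_k\}$ is dense, $L(\Sigma)=\partial\h^3$, as desired.

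The main obstacle is bookkeeping: one has to preserve, throughout the induction, both strict $L^\infty$ stability and unique area-minimization, both of which are needed to reapply Theorem~\ref{th:bridge}. Strict $L^\infty$ stability is delivered by the Bridge Theorem itself, while unique area-minimization follows from combining the Bridge Theorem with Corollary~\ref{far-apart-corollary}, provided each $U_n$ is chosen sufficiently small and far from the current surface---a condition that can always be arranged by shrinking $U_n$ before placing $\mathcal{D}_n$.
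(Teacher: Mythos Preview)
Your approach is correct and closely parallels the paper's own proof: both interleave the topology-building steps of Theorem~\ref{th:first} with ``cosmetic'' steps that attach small totally geodesic disks via the Bridge Theorem~\ref{th:bridge}, using Corollary~\ref{far-apart-corollary} to ensure the disjoint union remains uniquely area-minimizing before each bridge is added. The one substantive difference is the mechanism for filling the ideal boundary. At stage $n$ the paper places a small disk in \emph{every} component $\Omega_i$ of $\partial\h^3\setminus\partial\Sigma_n$ and connects it by a bridge along an arc $\Gamma_i\subset\Omega_i$ that is $\tfrac{1}{2n}$-close to every point of $\Omega_i$; this yields an auxiliary property $(\mathrm{V}_n)$ asserting that $\partial\Sigma'_n$ is $1/n$-dense in $\partial\h^3$, from which $L(\Sigma)=\partial\h^3$ follows directly. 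You instead fix a dense sequence $\{q_k\}$, add a single disk whose ideal circle passes through $q_n$, and arrange that a boundary arc through $q_n$ survives all later modifications. Your variant is lighter (one disk per stage rather than one per complementary component), but it requires the extra bookkeeping of protecting the neighborhoods $U_1,\dots,U_n$ from \emph{all} later bridge arcs---cosmetic as well as topology-building---and of handling the degenerate case $q_n\in\overline{\Sigma_n}$ (e.g.\ by perturbing $q_n$ slightly off $\partial\Sigma_n$, or by placing $\mathcal{D}_n$ merely $1/n$-close to $q_n$). The paper's filling-arc device sidesteps this bookkeeping at the cost of more bridges per stage; either route works.
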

\begin{proof}
We want to modify the proof of Theorem~\ref{th:first} as follows: we construct a sequence 
$\{\Sigma'_n\}_{n \in \n}$ in such a way that it satisfies Properties (I$_n$), $\ldots$, (IV$_n$) (see page \pageref{properties}) and:
 \begin{quote}
 (V$_n$) {\em The Euclidean distance } from $\partial  \Sigma_n$ to
any point in $\partial\h^3$ is less that $1/n$.
\end{quote}
To do this, once we have obtained the minimal surface $\Sigma_n$ satisfying (I$_n$),$\ldots$,(IV$_n$), 
then
we proceed as follows: Let $\Omega_1$,$\ldots$, $\Omega_k$ be the connected components of $\partial\h^3 \setminus \partial  \Sigma_n$. Take one of these components, $\Omega_i$, $i \in \{1,\ldots,k\}$ and consider  a complete, totally geodesic disk $D_i$ in $\h^3$ satisfying:
\begin{itemize}
\item $D_i$ and $\Sigma_n$ are disjoint;
\item $\partial  D_i \subset \Omega_i$;
\item $\diam_{\R^3} (\partial  D_i) < \frac{1}{2n};$
\item $D_i \cup \Sigma_n$ is uniquely area minimizing and strictly $L^\infty$ stable.
\end{itemize}
Such a disk exists by Corollary~\ref{far-apart-corollary}.
Let $\Gamma_i$ be a smooth arc in $\Omega_i$ that connects $\partial \Sigma_n$ and $\partial D_i$ and
that is $\frac{1}{2n}$ close to every point in $\Omega_i$. Then, we apply Theorem~\ref{th:bridge} to construct a new surface by connecting $\Sigma_n$ with $D_i$ by a bridge along the arc $\Gamma_i$. Notice that the  surface obtained in this way has  the topology as $\Sigma_n$. We  call $\Sigma'_n$ the  surface obtained by repeating the above procedure for all $i \in \{1, \ldots , k\}$. If the width of the bridges is sufficiently small we can guarantee that $\Sigma_n$ satisfies (V$_n$). So, the limit surface $\Sigma$ would satisfy that its limit set
 $L(\Sigma)$ is $\partial\h^3$.
\end{proof}

%%%%%%%REGULARITY %%%%%%%%%

\subsection{Regularity of the boundary}
Although the minimal embedding constructed in Theorem~\ref{th:first} is limit
of surfaces with smooth boundary, we cannot assert anything about the regularity
at infinity of the minimal surface that we have obtained. In the case of finite topology,
Oliveira and Soret \cite{oliveira-soret} constructed minimal embeddings that extends 
smoothly to $\overline{\h}^3$. Hence, we shall center our attention on the case of open
surfaces with infinite topology. If we do not care about the property that the limit sets  of different ends were disjoint,
then we can demonstrate the following:

\begin{theorem}
\label{th:smooth}
Let $S$ be  an open surface with infinite topology.  Then there exists a proper area-minimizing
 embedding of
$S$ into $\h^3$ such that the limit set  in $\partial\h^3$ is a smooth
curve except for one point. Moreover the area-minimizing embedding extends smoothly to an embedding of $S$ into $\overline{\h}^3$ except for that point.
\end{theorem}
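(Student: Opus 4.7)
The plan is to imitate the construction in Theorem~\ref{th:first}, but arrange so that the bridges added at stage $n+1$ are all placed inside a prescribed shrinking neighborhood $U_n\subset\partial\h^3$ of a single fixed point $p_\infty\in\partial\h^3$. Then, away from $p_\infty$, the ideal boundary of $\Sigma_n$ will be eventually constant in $n$, so every point of the limit set other than $p_\infty$ is a point at which $\overline{\Sigma}$ coincides with some smooth $\overline{\Sigma_N}$, and in particular is a smooth boundary point.

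Concretely, I would fix $p_\infty\in\partial\h^3$ together with a nested sequence of open neighborhoods $U_1\supset U_2\supset\cdots$ of $p_\infty$ with $\operatorname{diam}_{\R^3}(U_n)<2^{-n}$. The seed $\Sigma_1$ is taken to be a totally geodesic disk whose asymptotic circle passes through a point close to $p_\infty$; it is uniquely area-minimizing and strictly $L^\infty$ stable by Corollary~\ref{co:plane}. Now assume inductively that $\Sigma_n$ has been built satisfying the properties (I$_n$)--(IV$_n$) of Theorem~\ref{th:first}, together with the additional invariant that every bridge used in the construction so far lies in $U_1\cup\cdots\cup U_{n-1}$ and that $\partial\Sigma_n\cap U_n$ contains a smooth arc onto which the next bridge(s) can be attached orthogonally. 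Consulting the simple exhaustion to decide whether the topological move is a pair of pants or an annulus with handle, choose arcs $\Gamma_n\subset U_n$ meeting $\partial\Sigma_n$ orthogonally, and apply Proposition~\ref{pro:bridges} together with the Bridge Theorem~\ref{th:bridge} to attach bridges of so small a width that (a) property (IV$_{n+1}$) holds and (b) the new bridges and the modified boundary remain inside $U_n$. The Bridge Theorem produces a uniquely area-minimizing, strictly $L^\infty$-stable surface $\Sigma_{n+1}$ whose closure is a smooth embedded manifold-with-boundary. As in Theorem~\ref{th:first}, the sequence $\{\Sigma_n\}$ converges to a proper, area-minimizing embedding $\Sigma$ of $S$ into $\h^3$.

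For the regularity assertion, fix any $q\in\overline{\h^3}\setminus\{p_\infty\}$ and pick a neighborhood $V$ of $q$ in $\overline{\h^3}$ disjoint from some $U_N$. By construction, for every $n\ge N$ the bridge added at stage $n+1$ lies inside $U_n\subset U_N$ and is therefore disjoint from $V$; by the smooth-convergence clause of Theorem~\ref{th:bridge} on compact sets away from the bridge arc, $\overline{\Sigma_n}\cap V=\overline{\Sigma_N}\cap V$ for all $n\ge N$. Passing to the limit, $\overline{\Sigma}\cap V=\overline{\Sigma_N}\cap V$, which is a smooth embedded manifold-with-boundary. Hence $\overline{\Sigma}$ is a smooth manifold-with-boundary in $\overline{\h^3}\setminus\{p_\infty\}$ and $L(\Sigma)\setminus\{p_\infty\}$ is a smooth embedded curve, as required.

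The hard part will be the bookkeeping needed to keep $\partial\Sigma_n\cap U_n$ in a configuration that admits the next bridge: attaching a bridge inside $U_n$ alters the boundary near $p_\infty$, and we must ensure that the altered boundary still meets $U_{n+1}$ along an arc on which an orthogonal bridge of arbitrarily small width can be based. Since bridges are local modifications and Proposition~\ref{pro:bridges} allows arbitrarily narrow bridges along any prescribed arc, this can be handled by an explicit geometric choice at each step; but it is the one genuinely new ingredient beyond the proof of Theorem~\ref{th:first}, because everything else (strict $L^\infty$ stability, uniqueness of the area-minimizing surface, and smooth convergence off the bridge arc) is inherited directly from the Bridge Theorem.
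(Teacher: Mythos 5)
Your overall strategy (force all late-stage boundary modifications to accumulate at a single ideal point, so that the ideal boundary stabilizes on compact sets away from that point and smoothness follows there) is the same as the paper's, but the mechanism you propose has a genuine gap at exactly the place you flag as ``bookkeeping.'' The inductive invariant ``all bridges added at stage $n+1$ lie inside a prescribed neighborhood $U_n$ of $p_\infty$ with $\operatorname{diam}(U_n)<2^{-n}$'' cannot be maintained for a general infinite-topology surface. The simple exhaustion dictates \emph{which} boundary curve of $\Sigma_n$ must receive the next bridge; that curve was created at some earlier stage, is fixed from then on, and the boundary curves at any stage are pairwise disjoint Jordan curves, so at most one of them can pass through $p_\infty$ and a fixed compact curve avoiding $p_\infty$ meets only finitely many of the $U_n$. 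For a surface such as a two-ended surface of infinite genus (handles must be attached infinitely often along two distinct lineages of boundary curves), the curve you are required to bridge at stage $n+1$ will in general not intersect $U_n$ at all, so no bridge attached to it can be contained in $U_n$. Your appeal to Proposition~\ref{pro:bridges} (``arbitrarily narrow bridges along any prescribed arc'') addresses the \emph{width} of the bridge, not its \emph{location}: the arc must have its endpoints on the mandated curve. The paper circumvents precisely this difficulty by a different device: it pre-positions auxiliary uniquely minimizing pieces ($D_n$, the annuli $A_n$, the disks-with-handle $T_n$) in slabs marching off to the designated singular point ($\infty$ in the half-space model) and joins the current surface to them by a long bridge running monotonically outward inside the unbounded complementary region $\mathcal{C}_n$; this simultaneously creates the required topology and drags the ``active'' part of every lineage toward $\infty$, which is what yields the stabilization~\eqref{arcs-stabilize}. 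That route also requires Lemma~\ref{far-apart-lemma} and Corollary~\ref{far-apart-corollary} to make the disconnected configuration $\Sigma_n\cup W_{n+1}$ uniquely area-minimizing before Theorem~\ref{th:bridge} can be applied, a step that does not appear in (and is not replaced by anything in) your scheme.

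A secondary, fixable inaccuracy: your regularity argument asserts $\overline{\Sigma_n}\cap V=\overline{\Sigma_N}\cap V$ for $n\ge N$. This is false; adding a bridge changes $S_n$ globally, and Theorem~\ref{th:bridge} only gives smooth \emph{convergence} on compact subsets of $\overline{\h^3}\setminus\Gamma$, not equality. What stabilizes is the ideal boundary $\partial\Sigma_n$ on compact subsets away from the bad point (this is the content of~\eqref{arcs-stabilize} in the paper); one then concludes smoothness of $\overline{\Sigma}$ away from that point by combining the smooth interior convergence with the multiplicity-one boundary convergence case of the Compactness Theorem~\ref{compactness-theorem} (or the Boundary Regularity Theorem), rather than by literal coincidence of the surfaces.
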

\begin{proof} We will  the upper half-space model of $\h^3$,
so $\partial\h^3= \{z=0\} \cup \{\infty\}$.
Let $\mathcal{S}=\{ S_1 \subset S_2 \subset \cdots \subset S_n \subset \cdots \}$ a simple exhaustion
for the surface $S$. 
For $n \in \n$, we define $X_n=\{ (x,y,z) \in \overline{\h}^3 \; : \; 2(n-1)<x <2 n-1\}$ and $Y_n =\{ (x,y,z) \in \overline{\h}^3 \; : \; 2n-1 < x <2 n\}$.

Consider a totally geodesic disk $D_n$ contained in the region $X_n$ given by the semi-sphere centered at $(2n-3/2,0,0)$ and radius $r_n<1/2$. Let $A_n$ the minimal annulus obtained by adding a bridge to $D_n$ along a diameter of $\partial  D_n$.
Similarly, we can construct a minimal disk with a handle $T_n$, included in the region $Y_n$. First we add a bridge at infinity $B$ to a totally geodesic disk represented by  a semi-sphere centered at $(2n-1/2,0,0)$ and radius $r_n<1/2$. Later, we add a second bridge $B'$ along a curve in $\partial\h^3$ joining to opposites points of the ideal boundary of $B$. Notice that the surfaces
$A_n$ and $T_n$, $n \in \n$, satisfy the 
hypothesis of our bridge principle at infinity (Theorem~\ref{th:bridge}). 
\begin{figure}[htbp]
    \begin{center}
        \includegraphics[width=\textwidth]{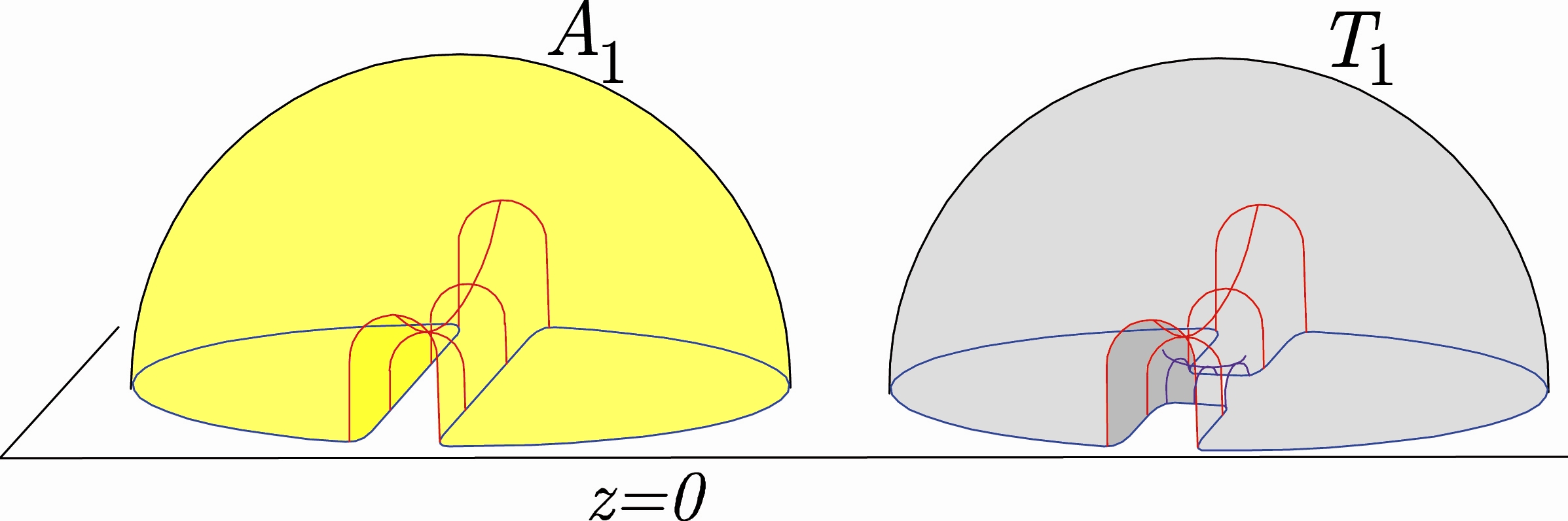}
    \end{center}
   \caption{The surfaces $A_1$ and $T_1$} \label{fig:ends-3}
\end{figure}

As in the proof of Theorem~\ref{th:first}, we construct our surface inductively.  The first element in our sequence is the totally geodesic disk $\Sigma_1=D_1$. The second element in the sequence, $\Sigma_2$,  is obtained by joining $\Sigma_1$ with $W_2 \in \{A_2,T_2\}$ by a bridge at infinity along a curve $\Gamma_2$ with is contained in $\partial\h^3 \cap \{x<4\}$. The choice of $W_2$ depends on the topology of $S_2 \setminus \mbox{\rm Int} (S_1)$.
To add this bridge,  we have to guarantee that  $\Sigma_1 \cup W_2$ satisfies the assumptions of Theorem~\ref{th:bridge}. It is clear that $\Sigma_1 \cup W_2$ is strictly $L^\infty$ stable, so we only need to check that it is {\em uniquely area-minimizing}. 
By corollary~\ref{far-apart-corollary},
this can be guaranteed by applying a suitable homothetical shrinking to $W_2$ with respect to $(5/2,0,0)$ or $(7/2,0,0)$ (depending on 
the nature of $W_2$). Observe that the ideal boundary $\partial \Sigma_2$ is a set of pairwise disjoint Jordan curves so that $\partial\h^3 \setminus \partial  \Sigma_2$ consists of a disjoint union of disks (actually, either one or two disks) and one unbounded connected component that is not simply connected and that we shall denote $\mathcal{C}_2$.

Assume that the surface $\Sigma_n$ is constructed in such a way that $\Sigma_n$ is 
diffeomorphic to $S_n$ and $\partial \Sigma_n$ consists of a finite set of pairwise disjoint Jordan curves and such that $\partial\h^3 \setminus \partial  \Sigma_n$ consists of a disjoint union of finitely many topological disks 
together with one unbounded component $\mathcal{C}_n$.
We are going to show how to construct the surface $\Sigma_{n+1}$. We know that $S_{n+1} \setminus \mbox{\rm Int}(S_n)$ contains exactly one non-annular connected component that we call $\Delta_{n+1}$. Let $\sigma_{n+1}\subset \partial \Sigma_n$ be the connected component of $\partial  \Sigma_n$ which corresponds to $\partial \Delta_{n+1} \cap 
\partial 
S_{n}$ and let $q_{n+1}=(x_{n+1}, y_{n+1},z_{n+1})$ be the point of $\sigma_{n+1}$ with the highest 
$x$-coordinate. We have that $x_{n+1} \in [m,m+1]$ for some $m \in \n$.

Then we are going to construct  a curve $\Gamma_{n+1} \subset \mathcal{C}_n \cap \{m \leq x < 2 (n+1) \}$ joining $q_{n+1}$ and $W_{n+1} \in \{A_{n+1}, T_{n+1}\}$, where $W_{n+1}$ depends on the topology of $\Delta_{n+1}$. To do this, we proceed as follows. The intersection of $\{(t,0,0) \; : \; t\geq x_{n+1} \}$ and $\overline{\mathcal{C}_n}$ consists of a finite
(disjoint) union of segments $\alpha_1 \cup \cdots \cup \alpha_l$ and a half-line $r$. Let $\alpha_{l+1}$ be the
piece of $r$ joining $\partial  \Sigma_n$ and $\partial  W_{n+1}$. For $j \in \{1, \ldots,l\}$, label $\beta_j$ the arc
in $\partial \Sigma_n$ that joins the end point of $\alpha_j$ and the initial  point of $\alpha_{j+1}$. Notice that, from
our method of construction, the $x$-coordinate is non-decreasing along 
   $\beta_j$, $j=1,2,\ldots,l$.  Let us define
$$\gamma=\alpha_1 \ast \beta_1 \ast \alpha_2 \ast \cdots \ast \alpha_l \ast \beta_l \ast \alpha_{l+1}.$$
The curve $\Gamma_{n+1}$ is a suitable perturbation of $\gamma$ satisfying that $\Gamma_{n+1} \subset \mathcal{C}_n \cap \{m \leq x < 2 (n+1) \}$, and that $\Gamma_{n+1}$
does not touch the $x$-axis.

Again, up to a suitable shrinking of $W_{n+1}$ we can assume that we are in the conditions for applying Theorem~\ref{th:bridge}, and so, we obtain $\Sigma_{n+1}$ by adding a bridge along $\Gamma_{n+1}$ to $\Sigma_n \cup W_{n+1}$. Observe that
the bridge can be chosen so that it does not intersect the $x$-axis.

It is important to notice that the sequence of surfaces $\{\Sigma_n\}_{n \in \n}$ constructed in this way satisfies that, for all $r>0$ , the ideal boundary $\partial  \Sigma_n$ intersects the region
$\{x \leq r\}$ in the same set of arcs, for $n$ sufficiently large.

It is important to note that 
for every $r$ and $n$, $(\partial  \Sigma_n)\cap \{x<r\}$ is a finite collection of arcs.
Furthermore, there is an $n$ such that
\begin{equation}\label{arcs-stabilize}
  (\partial \Sigma_n)\cap \{x<r\} = (\partial  \Sigma_k)\cap \{x<r\}
\end{equation}
for all $k\ge n$.

Reasoning as in the proof Theorem~\ref{th:first}, we can guarantee that the sequence $\{\Sigma_n\}_{n \in \n}$ converges smoothly on compact sets to a  properly embedded minimal surface $\Sigma$. 
From~\eqref{arcs-stabilize} we see that 
$\partial \Sigma \cap \{x \leq r\}=\partial  \Sigma_n \cap \{x \leq r\},$ 
for $n \in \n$ large enough. Thus $(\partial  \Sigma)\setminus \{\infty\}$ 
is smooth and properly embedded in $\partial\h^3\setminus\{\infty\}$. 
 \end{proof}

%%%%%%%%%%%%%%%%%%

\end{document}